\newcommand{\derive}[2] {{\frac{\partial {#1} }{\partial {#2}}}}
\newcommand{\derd}[2] {{\frac{\vd #1 }{\vd #2}}}
\newcommand{\T}{\mathcal T}
\newcommand{\R}[0]{\mathbb{R}}
\newcommand{\Z}{\mathbb{Z}}
\newcommand{\C}{\mathbb{C}}
\newcommand{\abs}[1]{\left | #1\right| }
\newcommand{\norm}[1]{\left \|#1  \right \|}
\newcommand{\CC}{\mathcal{C}}
\newcommand{\set}[2]{ \left \{ #1 ~\middle|~  #2 \right\}}
\newcommand{\sset}[1]{ \{ #1\}}
\newcommand{\Leb}{\mathrm{L}}
\DeclareMathOperator{\im}{im}
\newcommand{\vd}{ \mathrm{d}}
\newcommand{\intd}{\,\mathrm{d}}
\newcommand{\e}{\mathrm{e}}
\renewcommand{\epsilon}{\varepsilon}
\renewcommand{\phi}{\varphi}
\newcommand{\skp}[2]{\left \langle {#1}, {#2} \right \rangle}
\newcommand{\bskp}[2]{\left [ {#1}, {#2} \right ]}
\DeclareMathOperator{\argmin}{argmin}
\DeclareMathOperator{\bigO}{\mathcal{O}}
\DeclareMathOperator{\lspan}{span}
\DeclareMathOperator{\op}{\mathbb{P}}
\DeclareMathOperator{\ip}{\mathbb{I}}
\DeclareMathOperator{\ev}{\derive{U}{u}}
\DeclareMathOperator{\Id}{\mathrm{I}}
\newcommand{\rskp}[2]{\left(#1\middle)\cdot\middle(#2\right)}
\newcommand{\drskp}[2]{#1 \cdot #2}
\DeclareMathOperator{\ch}{ch}
\theoremstyle{definition}
\newtheorem{definition}{Definition}
\theoremstyle{theorem}
\newtheorem{lemma}{Lemma}
\theoremstyle{remark}
\theoremstyle{theorem}
\newtheorem{theorem}{Theorem}
\newtheorem{corollary}{corollary}
\author{Simon-Christian Klein\footnote{simon-christian.klein@tu-bs.de}
			}
\date{\today}
\title{Stabilizing Discontinuous Galerkin Methods Using Dafermos' Entropy Rate Criterion: II -- Systems of Conservation Laws and Entropy Inequality Predictors}
\begin{document}

	\maketitle
	\abstract{A novel approach for the stabilization of the Discontinuous Galerkin method based on the Dafermos entropy rate crition is presented. First, estimates for the maximal possible entropy dissipation rate of a weak solution are derived. Second, families of conservative Hilbert-Schmidt operators are identified to dissipate entropy. Steering these operators using the bounds on the entropy dissipation results in high-order accurate shock-capturing DG schemes for the Euler equations, satisfying the entropy rate criterion and an entropy inequality.
}
	\section{Introduction} \label{sec:intro}
	Discontinuous Galerkin methods \cite{CS2001RKDG} are a popular tool to design numerical schemes for hyperbolic systems of conservation laws \cite{Dafermos2016Hyper}
\begin{equation} \label{eq:HCL}
	\derive{f(u)}{x} + \derive{u}{t} = 0 \quad \text{for} \quad  u(x, t): \R \times \R \to \R^m, \quad f:\R^m \to \R^m.
\end{equation}
\begin{table}
	\begin{tabular}{c | c}
		A cell of the subdivision $\T$ of the domain $\Omega$ & $T$ \\
		The left and right boundaries of cell $T$ & $T_l$, $T_r$\\
		The set of test functions & $\mathcal{D}$ \\
		The space of ansatz functions for an cell $T$& $V^T = \lspan\sset{\phi_1, \phi_2, \dots, \phi_N}$ \\
		Polynomials of degree of $p$ in cell $T$ & $V^{T, p}$\\
		$\Leb^2$ projection of $u$ onto $V$ & $\op_V u$.\\
		Interpolation of $u$ on $V$ w.r.t. the collocation points $(\xi_k)_{k=1}^N$& $\ip_V u$ \\ 
		Vector of nodal values in cell $T$ at time $t$& $u^T(t)$ \\
		ansatz function in cell $T$ at position $x$ and time $t$ & $u^T(x, t)$ \\
		Inner product on cell $T$ & $\skp{u}{v}_T = \int_T \rskp{u(x)} {v(x)} \intd x$\\
		Surface inner product on cell $T$ & $\bskp{v}{f}_T = \int_{\partial T} \rskp v f \intd O$\\
		Gramian Matrices on cell $T$ &  $	M_{k, l}^T = \skp{\phi_k}{\phi_l}_T,\, S_{k, l}^T = \skp{\derive{\phi_k}{x}}{\phi_l}_T $\\
		The total entropy in cell $T$ & $E_{u, T}(t)$ \\
		The discrete total entropy in cell $T$ & $E^T(t)$ \\
		The inner product on $T$ discretised using $\omega_k$ & $\skp{u}{v}_{T, \omega} = \sum_k \omega_k u_k v_k$\\
		Entropy variables in cell $T$ & $\derive{U}{u}(u^T(x, t))$ \\
		Vector of nodal values of the entropy variables in cell $T$ at $t$ & $\ev^T(t)$ \\
		Interpolation of the entropy variables in cell $T$ on $V$& $\ev^T(x, t)$ \\
		The canonical inner product between $a, b \in \R^n$ & $\drskp{a}{b}$ or $\rskp{a}{b}$ \\
		The inner product between $u$ and $v$ on cell $T$ & $\skp{u}{v}_T$ \\
		The $p$ norm of $u$ in cell $T$ & $\norm{u}_{T, p}$ \\
		Exact solution to the initial condition $u(x, t_0)$ after $t-t_0$ & $H(u(\cdot, t_0), t-t_0)$. \\
		Mean value of subcell $k$ of $N$, $u^T$ as initial condition & $u^{T, N}_k$ \\
		The convex hull of a set $A$ & $\ch A$ \\
	\end{tabular}
	\caption{Notation used. As a general rule, quantities with only $t$ as an argument are vectors of nodal values at a certain time. Values with $x$ and $t$ in their argument list are functions that were evaluated at these values. Objects with $T$ added as exponent are approximations of the quantity in the cell $T$.}
	\label{tab:notation}
\end{table}
An intriguing feature of DG methods is their ability to transfer the definition of a weak solution to a hyperbolic conservation law \cite{GR91}
\begin{equation}
		\label{eq:weaksol}
	\begin{aligned}
\forall \phi \in \mathcal{D}:	\int_0^\infty \int_\R \rskp{u(x, t)}{ \derive{\phi(x, t)}{t}} &+ \rskp{ f \circ u(x, t)}{ \derive{\phi(x, t)}{x}}  \intd x \intd t \\
	&+ \int_\R \rskp{u(x, 0)}{\phi(x, 0)} \intd x = 0. 
	\end{aligned}
\end{equation}
to the semidiscrete level \cite{Cockburn1989DGI,CockburnShu1989DGI,ShuDGReview}. Using a method of lines approach this leads to the set of equations

\[
\forall T \in \T, \phi \in \mathcal{D}: \skp{\derive{\phi}{x}}{f}_T - \bskp{\phi}{f}_T - \skp{\phi}{\derive{u^T}{t}}_T = 0
\]
for every cell $T \in \T$ of a subdivision $\T$ of the domain into cells. The solution $u(x, t)$ is approximated in every cell by $u^T(x, t) \in V^T$ out of a finite dimensional space of ansatz functions $V^T$.
Using an approximation of the inner products as point evaluations results in the matrix vector form
\begin{equation} \label{eq:DGMatForm}
M^T \derd{u^T}{t} = S^T f(u^T(t)) - \begin{pmatrix} \phi^T_1(x_r)f^*_r-\phi^T_1(x_l) f^*_l \\ \vdots \\  \phi^T_N(x_r) f^*_r  - \phi^T_N(x_l) f^*_r\end{pmatrix}.
\end{equation}
Sadly, the constructed schemes lack robustness and stability in the high order case and some stabilization measures and robustness enhancements are needed. Popular are overintegration, flux-differencing, modal filtering, sub-cells and (W)ENO recoveries \cite{GOS2018Modal,RGOS2018Stab,Gassner,ShuDGReview,ZHU20114353,LUO2007686,MS2014FVS}. In this publication the procedure first presented in \cite{klein2023stabilizing} will be refined, connections to some other stabilization techniques will be shown, and the technique will be tested on a catalog of problems for the Euler system of conservation laws. The method in \cite{klein2023stabilizing} is based on the entropy rate admissibility criterion  \cite{Dafermos72,Feireisl2014MD}. An entropy \cite{Lax71} is a convex functional $U:\R^m \to \R$ satisfying
\[
	\derd f u \derd U u = \derd F u
\]
in conjunction with a entropy flux function $F:\R^m \to \R$.
One can show that for this pair $(F,U)$ holds
\begin{equation} \label{eq:ceieq}
	\derive {U(u(x, t))}{t} + \derive{F}{x} \leq 0
\end{equation}
in the sense of distributions \cite{Lax71}. If the solution is smooth one can even show
\[
		\derive {U(u(x, t))}{t} + \derive{F}{x} = 0.
\] 
The entropy rate criterion states that the total entropy 
\[
E_u(t) = \int U(u(x, t)) \intd x
\]
of the selected weak solution $u$ should reduce faster than the entropy of any other existing weak solution $\tilde u$
\[
\forall t > 0: \quad \derd{E_u}{t} \leq \derd{E_{\tilde u}}{t}. 
\]
A numerical approximation of this total entropy can be defined as 
\begin{equation} \label{eq:dEdef}
	E_{u, T}(t) = \int_T U(u^T(x,  t)) \intd x \approx \sum_{k} \omega^T_k U(u^T(x_k, t)), \quad E_{u} = \sum_{T \in \T} E_{u, T}
\end{equation}
via a (positive) quadrature rule $\omega^T_k$ on each cell $T \in \T$.
The numerical enforcement of the criterion with respect to such a definition of the discrete entropy happened in \cite{klein2023stabilizing} in three steps
\begin{itemize}
	\item Calculate the time derivative of the ansatz function $\derd {\tilde u^T} t$ using a DG scheme
	\item Calculate an error prediction $\delta^T$ for $\derd {\tilde u^T} t$ on $T$, i.e. $\norm{\derd{\tilde u^T} t - \derive{u} t}_T \leq \delta^T$.
	\item Correct the time derivative into the direction of the steepest entropy descent 
	\[
	\derd {u^T} t = \derd {\tilde u^T} t - \frac{\delta^T}{\norm{h^T}_T} h^T,
	\]
		where $h$ shall be the steepest descent direction that does not change the average value in cell $T$.
\end{itemize}
	While the approach above is successful for scalar conservation laws
 \cite{klein2023stabilizing} significant improvements can be made by introducing two refinements. The first one concerns the usage of an error indicator to estimate the entropy correction needed. We will instead show that it is possible to directly give bounds on how dissipative a weak solution can be. This will eliminate the need for the error indicator while allowing a faster convergence, because the derived bounds converge to zero significantly faster in the smooth case. A second refinement concerns the direction used for the entropy correction. DG methods can make use of modal filtering to remove unwanted high frequency modes from the solution \cite{RGOS2018Stab}. These filters can be sometimes expressed as viscosity, and we will devise correction directions that at the same time dissipate entropy and filter the solution from unwanted oscillations and thereby combine the dissipation and filtering.

Our schemes will therefore follow the slightly different general layout of
\begin{itemize}
	\item Calculate a time derivative for the ansatz function $\derd {\tilde u^T} t$
	\item Estimate the highest possible entropy dissipation speed $\sigma^T$ in cell $T$
	\item Calculate the correction direction $\upsilon^T$
	\item Calculate the size $\lambda^T$  of the correction needed to achieve that 
	\[
		\derive{u^T}{t} = \derive{\tilde u^T}{t} + \lambda^T \upsilon^{T}
	\]
	satisfies
	\begin{equation}
		\label{eq:lambdadef}
 \derd {E_{u, T}} t = \skp{\derd U u}{\derive{\tilde u^T}{t} + \lambda^T \upsilon^T}_T \leq \sigma^T + F^*_l - F^*_r.
	\end{equation}
	the dissipation mandated by the estimate. Here $F^*$ shall be a numerical entropy flux \cite{Tadmor1984I, Tadmor1984II, Tadmor1987}.
	\end{itemize}
The procedure makes only use of the fact that in our cells there exist local ansatz functions and is therefore also applicable to similar schemes like the spectral volume (SV) method \cite{Wang2002SV}. The only difference would lie in the evaluation of a different scheme for the uncorrected derivative $\derd{\tilde u} {t}$. One complication is brought in by the fact that entropy dissipation implies the non-smoothness of the solution, as otherwise the entropy equality applies. Therefore, dissipation can't happen in cells in the continuous setting, as polynomials are smooth. Instead, dissipation is a process taking place at the cell edges were our different ansatz functions transition.  As we are not correcting the numerical fluxes used between cells dissipation will be centered in cells and not at cell edges, and we will show in section \ref{sec:CorIP} how to work around this problem. 
	\section{Entropy inequality predictors} \label{sec:theory}
	\subsection{Bounds for entropy and entropy dissipation}
Our main tool to approximate the most dissipative weak solution using a DG method will be a bound on the derivative of the total entropy. We will derive a lower bound for the entropy dissipation
\[
	s^\theta_u(t_1, t_2) = \int_{t_1}^{t_2}\int_{\theta} \derive{U}{t} + \derive{F}{x} \intd x \intd t \leq 0.
\]
Here $\theta \subset \Omega$ shall be an arbitrary open subdomain of the complete domain. This value has to be smaller than zero for a solution that is admissible with respect to the classical entropy inequality \eqref{eq:ceieq}. Further, we are interested in the entropy dissipation speed
\[
	\sigma^\theta_u(t) = \derive{s^\theta_u(t_0, t)}{t} =  \int_{\theta} \derive{U}{t} + \derive{F}{x} \intd x  \leq 0.
\] If this value is known one can estimate the total entropy's $E_u(t)$ derivative as
\[
	E_u(t) \geq \sum_{\theta \in \Theta} s^\theta_u(0, t), \quad	\derd{E_u}{t} \geq \sum_{\theta \in \Theta} \sigma^\theta_u,
\]
when $\Theta = \{ \theta_1, \theta_2, \theta_3, \dots, \theta_L\}$ is overlapping $\Omega$ in the sense of 
\[
\Omega \subset \bigcup_{\theta \in \Theta} \theta.  
\]

To achieve our goal of estimating $s^\theta$ we will view the problem in the setting of classical Finite-Volueme schemes \cite{SONAR201655} and go over to the limit $\Delta x \to 0$. In \cite{Dafermos2009MDR} it was shown that for scalar conservation laws the flux $f$ of the solution to the Riemann problem $u_\text{R}(u_l, u_r; x, t)$ is given by 
\[
	f\left(\argmin_{u \in \ch(u_l, u_r)} \skp{\derd U u (u_r) - \derd U u (u_l)}{f(u)} \right),
\]
i.e. by entering the value of $u$ into $f$ that when entered into the flux yields the fastest entropy dissipation. In \cite{KS2023EAR} it was shown that some approximate Riemann solvers, for example the local Lax-Friedrichs flux, can be also interpreted as approximate solutions to such variational descriptions of two-point fluxes. While the aforementioned results hold for semidiscrete schemes the new results below are new and aim at three point first order Finite-Difference/Finite-Volume schemes for systems of conservation laws. As one assumes piecewise constant functions in those first order methods any quadrature exact for constants will yield the same result in equation \eqref{eq:dEdef}. As we only look at discrete time values in this part of the publication we will write $E^n_u = E_u(t_n)$ for the discrete total entropy at time level $n$.
\begin{lemma} \label{lem:LFspeed}
	Let a system of hyperbolic conservation laws in conservation form and a strictly convex entropy pair $(U, F)$ be given that is approximated by a Finite-Volume scheme with grid constant $\lambda = \frac{\Delta t}{\Delta x}$. Then the original Lax-Friedrichs scheme has the fastest dissipation of the total entropy
	\[
	E^n_u = \sum_{k} U(u^n_k) \Delta x
	\]
	 under all consistent and conservative three-point numerical schemes.
	\end{lemma}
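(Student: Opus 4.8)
The plan is to exploit the classical identification of the original Lax--Friedrichs scheme with an \emph{exact-evolution-then-projection} scheme (a staggered Godunov scheme built from the exact entropy solution of the Riemann problems), use this to get a sharp expression for its per-step entropy balance, and then compare an arbitrary consistent conservative three-point scheme against it through the interface fluxes that the two share.

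First I would write every consistent and conservative three-point scheme in flux form, $u_k^{n+1} = u_k^n - \lambda\of{g_{k+1/2} - g_{k-1/2}}$ with $g_{k+1/2} = g(u_k^n,u_{k+1}^n)$ and $g(u,u) = f(u)$, and record that the original Lax--Friedrichs flux $g^{\mathrm{LF}}(u_L,u_R) = \tfrac12\of{f(u_L)+f(u_R)} - \tfrac1{2\lambda}(u_R-u_L)$ produces an update that is an equally weighted average of neighbouring states corrected by a flux difference. Under the natural CFL restriction on $\lambda$ --- so that the Riemann fans emanating from neighbouring interfaces of the piecewise-constant reconstruction of $u^n$ do not interact before $t_{n+1}$ --- the exact entropy solution $v$ of the system with that datum still takes the values $u_k^n$ at the old cell centres throughout the step, and integrating the conservation law over the relevant staggered cell reproduces \emph{exactly} the Lax--Friedrichs update. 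Hence each Lax--Friedrichs value is a cell average of $v(\cdot,t_{n+1})$ over a cell straddling an old jump.

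The second ingredient is then immediate. Since $(U,F)$ is a strictly convex entropy pair, $v$ satisfies $\partial_t U(v) + \partial_x F(v)\le 0$ distributionally; integrating over the space--time staggered cell and using that the trace of $v$ at the cell ends equals the old nodal values bounds $\int U(v(\cdot,t_{n+1}))\intd x$ from above by $\int U(v(\cdot,t_n))\intd x$ minus a telescoping entropy flux term, while Jensen's inequality for $U$ bounds the weighted value $U(u_k^{n+1})$ from above by the cell mean of $U(v(\cdot,t_{n+1}))$. Combining the two and summing over $k$ yields a closed-form bound for $E^{n+1}_u$ in the Lax--Friedrichs case, expressed entirely through $u^n$ and the entropy flux $F$; in particular one recovers the known entropy dissipation of the scheme.

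The heart of the argument --- and the step I expect to be the main obstacle --- is to show that no other consistent conservative three-point scheme can undercut this value. The cell updates are coupled, because $g_{k+1/2}$ enters both $u_k^{n+1}$ and $u_{k+1}^{n+1}$, so one cannot optimise cell by cell (that would wrongly point to the trivial ``project onto the mean'' update); it is precisely the consistency constraint $g(u,u)=f(u)$ together with the conservation/telescoping structure that singles out Lax--Friedrichs. I would try two routes: (i) minimise $E^{n+1}_u = \sum_k U(u_k^{n+1})\,\Delta x$ over the admissible interface values $g_{k+1/2}$ and show that strict convexity of $U$ forces the first-order optimality conditions to coincide with the jump relations satisfied by the time-averaged entropy variables of the exact Riemann solution, pinning the minimiser down to $g^{\mathrm{LF}}$; or (ii) localise the comparison to a single interface, invoking the Dafermos-type variational characterisation of the most dissipative two-point flux recalled above (in the scalar case, $f$ evaluated at $\argmin_{u\in\ch(u_L,u_R)}\skp{\ev(u_R)-\ev(u_L)}{f(u)}$) together with its approximate-Riemann-solver counterpart from \cite{KS2023EAR}, to bound the per-interface entropy budget of any competitor from below by that of Lax--Friedrichs. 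Along the way one must also pin down the precise CFL constant making the staggered-Godunov identity exact (fans grazing the staggered cell ends need care), justify integrating the distributional entropy inequality over the space--time cell for $v\in\mathrm{BV}$, and, for systems admitting no exact Riemann solver with a well-defined most-dissipative two-point flux, identify the right a priori class of competitor schemes (e.g.\ those equipped with a consistent, possibly approximate, entropy Riemann solver) so that over-diffusive pathological schemes are excluded from the comparison.
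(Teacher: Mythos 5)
Your proposal correctly identifies the crux --- showing that no competitor scheme can undercut Lax--Friedrichs --- but it does not actually carry that step out, and that step is the entire content of the lemma. The staggered-Godunov identification in your first two paragraphs only yields an upper bound on $E^{n+1}$ for the LF scheme itself (the classical discrete entropy inequality); it gives no lower bound on the entropy of a competitor, so it contributes nothing to the comparison and is not used in the paper. Of the two routes you sketch for the comparison, route (i) as stated for general data fails: the unconstrained first-order conditions for minimizing $\sum_k U(u^{n+1}_k)$ over the interface values force $\derd{U}{u}(u^{n+1}_k)=\derd{U}{u}(u^{n+1}_{k+1})$ for all $k$, i.e.\ all updated values equal, which no consistent conservative three-point flux can realize for general data, and the constrained problem does not decouple cell by cell. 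Route (ii) leans on external variational characterizations that, as you note yourself, are problematic for systems.

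The paper's proof is far more elementary and sidesteps all of this by testing the scheme on a single Riemann datum. Consistency forces every interface flux except the one at the origin to equal the exact flux $f(u_l)$ or $f(u_r)$, so only the two cells adjacent to the jump change during the step; conservation fixes the sum $u^1_0+u^1_1$, and strict convexity of $U$ shows that $U(u^1_0)+U(u^1_1)$ is minimized on that affine constraint exactly at $u^1_0=u^1_1$. Substituting this equality into the two update equations and solving for the interface flux gives precisely the Lax--Friedrichs flux, so any two-point flux achieving maximal dissipation is uniquely determined, one pair of states at a time. The restriction to Riemann data is exactly what makes your route (i) well posed --- there is then only one free flux value and the optimization decouples --- and that is the idea your proposal is missing.
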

\begin{proof}
	Assume $f(u_k, u_{k+1})$ is a consistent numerical two point flux minimizing the total entropy with maximal rate and let $u_l, u_r$ be arbitrary in the domain of admissible values for the conserved variables. We apply a scheme using this flux to a Riemann problem, i.e. the initial data 
		\[
		u^0_{k} = \begin{cases}
					u_l & k \leq 0 \\
					u_r & k > 0 \\
					\end{cases}
		\] 
		to query the flux value $f(u_l, u_r) = f(u_0, u_1) = f_{\frac 1 2}$ by analyzing the solution. As the flux is consistent it holds 
		\[  
		\forall k < 0: f\left(u_{k}, u_{k+ 1}\right)= f_{k + \frac 1 2} = f(u_l), \quad  \forall  k > 0: f\left(u_{k}, u_{k+ 1}\right) = f_{k + \frac 1 2} = f(u_l).
		\]
		 The scheme
		\[
			u^1_k = u^0_k + \lambda \left(f_{k - \frac 1 2} - f_{k + \frac 1 2}\right)
		\] 
		therefore implies that $u^1_k = u^0_k$ for all $k \not \in \{0, 1\}$. The total entropy 
		\[
		E^1_u = E^0_u - \Delta x \left(U\left(u^0_0\right) + U\left(u^0_1\right)\right) + \Delta x \left(U\left(u^1_0\right) + U\left(u^1_1\right) \right)
		\]
		 is minimized by $u^1_0 = u^1_1$, as $U$ is strictly convex. Entering this into the scheme's definition with $u^0_0 = u_l$ and $u^0_1 = u_r$ implies
		\[
			u_l + \lambda(f(u_l) - f(u_l, u_r)) = u_r + \lambda(f(u_l, u_r) - f(u_r)).
		\]
		Rearranging for $f(u_l, u_r)$ shows
		\[
			f(u_l, u_r) = \frac{f(u_l) + f(u_r) }{2} +  \frac{u_l - u_r}{2 \lambda},
		\] 
		and this is the classical Lax-Friedrichs flux and therefore uniquely determined by demanding maximal entropy rate. \qed
	\end{proof}
This result shows that the classical LF scheme is the most direct realisation of a scheme satisfying Dafermos' entropy rate criterion and therefore justifies the use of the LF scheme in \cite{Klein2022Using} as the most dissipative scheme possible for systems of conservation laws. Similar results are also known for scalar conservation laws. Tadmor showed in \cite{Tadmor1984I, Tadmor1984II} that every monotonicity preserving scheme satisfying classical numerical entropy inequalities for a scalar conservation law has a viscosity coefficient less or equal to that of the LF scheme, and higher or equal than the viscosity coefficient of Godunov's scheme. Our result can be seen as a generalization of the LF part of this result to systems of conservation laws, as it states that the LF flux is the most dissipative flux for a selected time-step size.
Using the scheme above one can derive estimates for the highest possible entropy dissipation in a time-step and using finite-differencing of this result, approximations for the lowest possible derivative of the total entropy with respect to time. 
\begin{corollary} \label{cor:eiep}
	The biggest possible entropy dissipation during a discrete time-step of a Finite-Volume scheme with grid constant $\lambda = \frac{\Delta t}{\Delta x}$ is given by the difference
	\[
		E^{n+1} - E^{n} = \sum_k U\left (\frac{u_{k -1}^n + u_{k +1}^n}{2} + \lambda \left(f\left(u_{k-1}^n\right) + f\left(u_{k+1}^n\right)\right) \right) \Delta x - U\left(u_{k}^n\right) \Delta x
	\]
	and an approximation to the total entropy's minimal derivative by
	\[
		\derd{E}{t} \approx \sum_k \frac {U\left(\frac{u_{k -1} + u_{k +1}}{2} + \lambda (f(u_{k-1}) + f(u_{k+1}))\right) - U(u_{k})}{\lambda}.
	\]
	\end{corollary}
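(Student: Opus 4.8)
The plan is to derive the corollary directly from Lemma~\ref{lem:LFspeed}. That lemma pins down the original Lax--Friedrichs scheme as the unique consistent and conservative three-point scheme whose total entropy $E^n = \sum_k U(u^n_k)\Delta x$ decreases at the maximal rate; hence the largest entropy drop achievable in a single discrete step, over all such schemes, is exactly the drop produced by one Lax--Friedrichs step applied to the data $(u^n_k)_k$. So the first step is to write that step out explicitly. Inserting the Lax--Friedrichs flux $f(u_l,u_r)=\frac 1 2 \of{f(u_l)+f(u_r)} + \frac{1}{2\lambda}(u_l-u_r)$ obtained in the proof of Lemma~\ref{lem:LFspeed} into the update $u^{n+1}_k = u^n_k + \lambda\of{f_{k-\frac 1 2} - f_{k+\frac 1 2}}$, the contributions of $u^n_k$ and of $f(u^n_k)$ cancel, leaving $u^{n+1}_k$ as a function of $u^n_{k-1}$ and $u^n_{k+1}$ alone.

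Next I would substitute this expression for $u^{n+1}_k$ into $E^{n+1}=\sum_k U(u^{n+1}_k)\Delta x$ and subtract $E^{n}=\sum_k U(u^n_k)\Delta x$ termwise, which produces the claimed closed form for $E^{n+1}-E^{n}$. For the second assertion I would divide by the time increment $\Delta t = \lambda\,\Delta x$, so that $\Delta x$ cancels and the forward difference quotient $\tfrac{E^{n+1}-E^{n}}{\Delta t} = \sum_k \tfrac{U(u^{n+1}_k)-U(u^n_k)}{\lambda}$ appears; reading this quotient as an approximation of $\derd{E}{t}$, and using consistency of the scheme so that it converges to the minimal entropy rate as $\Delta x\to 0$ with $\lambda$ held fixed, gives the stated approximate identity.

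I do not expect a genuine obstacle, since the analytic content is already carried by Lemma~\ref{lem:LFspeed}; what remains is bookkeeping plus two points worth stating rather than computing. First, $U$ must be evaluated at admissible conserved states, so the argument needs a CFL-type restriction on $\lambda$ (and on the data) ensuring the Lax--Friedrichs step stays inside the domain on which $U$ is finite and strictly convex, and I would carry this as a standing hypothesis. Second, the ``$\approx$'' in the rate formula is deliberate: the displayed sum is precisely the discrete quantity the scheme will use, while equality with $\derd{E}{t}$ holds only in the limit $\Delta x\to0$, and I would make this explicit. The subtlest point is simply not to conflate the per-interface optimality actually established in the lemma with the one-step statement used here; this is legitimate because the lemma already fixes the optimal two-point flux as a fixed function of its two arguments, hence fixes the entire scheme.
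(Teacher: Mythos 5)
Your approach is exactly the one the paper intends: the corollary is stated without a written proof, and the implicit derivation is precisely to take the Lax--Friedrichs flux singled out in Lemma~\ref{lem:LFspeed}, write out one forward step, and substitute into $E^{n}=\sum_k U(u^n_k)\Delta x$; your handling of the rate formula (divide by $\Delta t=\lambda\Delta x$ so that $\Delta x$ cancels) and your two caveats (admissibility of the updated state under a CFL restriction, and the meaning of the ``$\approx$'') are all sound and consistent with the paper.

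One concrete point, however: your assertion that the termwise substitution ``produces the claimed closed form'' is not literally true, and carrying out the bookkeeping you deferred would have shown this. Inserting $f_{k\pm\frac12}$ into $u^{n+1}_k=u^n_k+\lambda\bigl(f_{k-\frac12}-f_{k+\frac12}\bigr)$ gives
\[
u^{n+1}_k=\frac{u^n_{k-1}+u^n_{k+1}}{2}+\frac{\lambda}{2}\Bigl(f\bigl(u^n_{k-1}\bigr)-f\bigl(u^n_{k+1}\bigr)\Bigr),
\]
whereas the corollary's display has $\lambda\bigl(f(u^n_{k-1})+f(u^n_{k+1})\bigr)$, i.e.\ both the sign and the factor $\tfrac12$ differ. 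The displayed version cannot be right as it stands, since for constant data it would give the argument $u+2\lambda f(u)\neq u$, contradicting consistency; so the paper's formula contains a typo that your derivation, if completed, would have corrected rather than reproduced. The rest of the argument (subtracting $E^n$ termwise and dividing by $\Delta t$) then goes through verbatim with the corrected expression.
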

The second estimate above degenerates for $\lambda \to 0$ as the difference in entropy is in general finite between cells $u_{k-1}, u_k, u_{k +1}$. A second, more refined, estimate is given by the following lemma based on the ideas from \cite{HLL1984HLL} and does not have these deficiencies.
\begin{lemma} \label{lem:HLLspeed}
	Given bounds on the fastest signal speed to the left $a_l$ and the highest signal speed to the right $a_r$ let $M \geq \max(\abs{a_l}, \abs{a_r})$. The maximum entropy dissipation of a Riemann problem solution on the interval $\theta = (-M, M)$ is bounded from below by
	\[
		E^{\theta}_u(t) - E^{\theta}_u(0) \geq  t \left((a_r - a_l)U\left ( u_{lr} \right)  +a_l U(u_l)  - a_rU(u_r)  \right)
	\]
	with
	\[
		u_{lr} = \frac{a_r u_r - a_l u_l+ f(u_l) - f(u_r)}{a_r - a_l}.
	\]
	The rate is bounded from below by
		\[
	\begin{aligned}
		\derd{E^{\theta}}{t}|_{t = 0} \geq (a_r - a_l) U(u_{lr}) + a_l U(u_l) - a_r U(u_r) 
	\end{aligned}.
	\]
	The entropy dissipation is bounded by
	\[
		s^\theta \geq t \left((a_r - a_l) U(u_{lr}) + a_l U(u_l) - a_r U(u_r) +F(u_l) - F(U_r) \right),
	\]
	and its rate by
	\[
	\sigma^\theta \geq (a_r - a_l) U(u_{lr}) + a_l U(u_l) - a_r U(u_r) +F(u_l) - F(U_r). 
	\]
	\end{lemma}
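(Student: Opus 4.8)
The plan is to reduce the statement to the exact self-similar solution of the Riemann problem and then apply Jensen's inequality to the convex entropy $U$. Let $u_{\mathrm R}(x,t)=v(x/t)$ be the Riemann solution with left state $u_l$ (for $x<0$) and right state $u_r$ (for $x>0$). By hypothesis every signal speed lies in $[a_l,a_r]$, so $u_{\mathrm R}\equiv u_l$ for $x<a_l t$ and $u_{\mathrm R}\equiv u_r$ for $x>a_r t$, all wave activity being confined to the fan $J_t\coloneqq(a_l t,a_r t)$. Since $M\ge\max(\abs{a_l},\abs{a_r})$, as long as $a_l t\ge -M$ and $a_r t\le M$ — for the fixed window $\theta=(-M,M)$ this is $t\le 1$ — the restriction of $u_{\mathrm R}(\cdot,t)$ to $\theta$ is $u_l$ on $(-M,a_l t)$, the fan on $J_t$, and $u_r$ on $(a_r t,M)$; in particular the two endpoints of $\theta$ carry the constant states $u_l$, $u_r$ for all these times.

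The first step is to identify $u_{lr}$ with the spatial average of the exact solution over $J_t$. I would integrate the conservation law \eqref{eq:HCL}, i.e.\ $\derive{f(u)}{x}+\derive{u}{t}=0$, over the space–time triangle with vertices $(0,0)$, $(a_l t,t)$, $(a_r t,t)$. Its two slanted edges lie inside the constant regions — here the hypothesis that $a_l,a_r$ bound the signal speeds is used essentially — so the divergence theorem applied to the field $(f(u),u)$, whose space–time divergence vanishes by \eqref{eq:HCL}, gives
\[
\int_{a_l t}^{a_r t} u_{\mathrm R}(x,t)\intd x = t\bigl(a_r u_r-a_l u_l+f(u_l)-f(u_r)\bigr),
\]
so dividing by the width $(a_r-a_l)t$ shows that the average of $u_{\mathrm R}(\cdot,t)$ over $J_t$ equals $u_{lr}$. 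This is insensitive to the signs of $a_l,a_r$, since only the values on the two slanted rays enter. By self-similarity $\int_{J_t}U(u_{\mathrm R}(x,t))\intd x$ is exactly linear in $t$, which lets us pass between the finite-time and the rate statements for free.

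The second step decomposes the entropy over $\theta$ and applies Jensen. Convexity of $U$ yields
\[
(a_r-a_l)\,t\,U(u_{lr})=(a_r-a_l)\,t\,U\!\left(\frac{1}{(a_r-a_l)t}\int_{J_t}u_{\mathrm R}(x,t)\intd x\right)\le\int_{J_t}U(u_{\mathrm R}(x,t))\intd x .
\]
Since $E^\theta_u(t)=(a_l t+M)U(u_l)+\int_{J_t}U(u_{\mathrm R}(x,t))\intd x+(M-a_r t)U(u_r)$ while $E^\theta_u(0)=M\,U(u_l)+M\,U(u_r)$, subtracting and replacing the fan integral by the Jensen bound gives precisely $E^\theta_u(t)-E^\theta_u(0)\ge t\bigl((a_r-a_l)U(u_{lr})+a_lU(u_l)-a_rU(u_r)\bigr)$; dividing by $t$ (or using the linearity above) gives the bound on $\derd{E^\theta}{t}\rest_{t=0}$. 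For $s^\theta$ and $\sigma^\theta$ one adds the entropy flux through $\partial\theta$: because the endpoints $\pm M$ carry the constant states $u_l,u_r$ throughout, $\int_0^t\!\int_\theta\derive{F}{x}\intd x\intd t'$ equals $t$ times the fixed difference of $F$ at the two endpoints; adding this term, and dividing by $t$, turns the previous bounds into the claimed bounds for $s^\theta$ and $\sigma^\theta$.

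The only genuinely delicate point is the first step — verifying that the two slanted rays sit in the constant states and that the edge contributions assemble exactly into the flux terms appearing in $u_{lr}$, uniformly over the subsonic, transonic and supersonic configurations. Everything else is Jensen together with an elementary partition of the spatial integral; note that admissibility of $u_{\mathrm R}$ is never used beyond the assumed speed bound, so the same lower bound in fact holds for any weak solution of the Riemann problem whose signal speeds are contained in $[a_l,a_r]$, and hence in particular for the entropy-rate-admissible one.
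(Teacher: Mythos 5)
Your proposal is correct and follows essentially the same route as the paper's proof: the divergence theorem on the space--time triangle $\ch\{(0,0),(a_lt,t),(a_rt,t)\}$ to identify $u_{lr}$ as the fan average, Jensen's inequality on the fan, the partition of $E^\theta$ into constant wings plus fan, and the endpoint entropy-flux correction for $s^\theta$ and $\sigma^\theta$ (the paper normalizes to $t=1$ and invokes scaling invariance where you invoke linearity in $t$, which is the same self-similarity argument). No gaps to report.
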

	\begin{proof}
		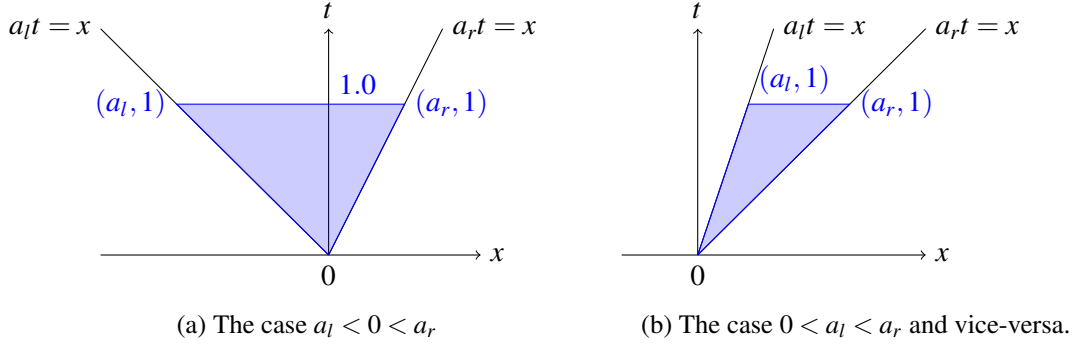
\begin{figure}
			\begin{subfigure}{0.55\textwidth}
			\begin{tikzpicture}
				% t axis
				\draw [->] (0.0, 0.0) node [below] {$0$} -- (0.0, 3.0) node [above] {$t$};
				% x axis
				\draw [->] (-3.0, 0.0) -- (2.0, 0.0) node [right] {$x$};
				% left speed bound
				\draw (0.0, 0.0) -- (-3.0, 3.0) node [left] {$a_l t = x$};
				% right speed bound
				\draw (0.0, 0.0) -- (1.5, 3.0) node [right] {$a_r t = x$};
				% integration area
				\filldraw [blue, fill opacity=0.2] (0.0, 0.0) -- (1.0, 2.0) node [right, fill opacity=1.0] {$(a_r, 1)$}
									-- (0.0, 2.0) node [above right, fill opacity=1.0]{$1.0$} 
									-- (-2.0, 2.0) node [left, fill opacity=1.0] {$(a_l, 1)$} 
									-- (0.0, 0.0);
						
			\end{tikzpicture}
			\caption{The case $a_l < 0 < a_r$}
		\end{subfigure}
		\begin{subfigure}{0.42\textwidth}
			\begin{tikzpicture}
				% t axis
				\draw [->] (0.0, 0.0) node [below] {$0$} -- (0.0, 3.0) node [above] {$t$};
				% x axis
				\draw [->] (-1.0, 0.0) -- (3.0, 0.0) node [right] {$x$};
				% left speed bound
				\draw (0.0, 0.0) -- (1.0, 3.0) node [right] {$a_l t = x$};
				% right speed bound
				\draw (0.0, 0.0) -- (3.0, 3.0) node [right] {$a_r t = x$};
				% integration area
				\filldraw [blue, fill opacity=0.2] (0.0, 0.0) -- (2.0, 2.0) node [right, fill opacity=1.0] {$(a_r, 1)$}
				
				-- (0.666666, 2.0) node [above right, fill opacity=1.0] {$(a_l, 1)$} 
				-- (0.0, 0.0);
			\end{tikzpicture}
		\caption{The case $0 < a_l < a_r$ and vice-versa.}
		\end{subfigure}
		\caption{Layout of the integration areas in the proof (blue). As originally used in \cite{HLL1984HLL}. To the left and right of the lines $\derd{x}{t} = a_l$ and $\derd{x}{t} = a_r$ the initial condition is unaltered.}
		\label{fig:HLLidea}
		\end{figure}
		The entropy of the initial condition in the interval $[-M, M]$ is given by
		\[
			\int_{-M}^M U(u(x, 0)) \intd x = M U(u_l) + MU(u_r)
		\]
		for any $M > 0$. Integrating over the triangle $T = \ch\{(0, 0), (a_l, 1), (a_r, 1)\}$ in spacetime and using the conservation law yields
		\[
			\begin{aligned}
			0 &= \int_T \derive{u}{t} + \derive{f}{x} \intd V(x, t) = \int_{\partial T} \drskp {\begin{pmatrix} f \\ u \end{pmatrix}}{ n} \intd O(x, t) \\
			 &= \int_{a_l}^{a_r} \drskp {\begin{pmatrix} f(u) \\ u(x, 1) \end{pmatrix}}{ \begin{pmatrix} 0 \\ 1  \end{pmatrix}} \intd x
			 + \int_0^1 \drskp {\begin{pmatrix} f(u) \\ u(t a_l, t) \end{pmatrix}}{ \begin{pmatrix} -1 \\ a_l \end{pmatrix}} \intd t 
			+ \int_0^1 \drskp {\begin{pmatrix} f(u) \\ u(t a_r, t) \end{pmatrix}}{ \begin{pmatrix} 1 \\ -a_r  \end{pmatrix}} \intd t \\
			&= (a_r - a_l) u_{lr} + a_l u_l - a_r u_r + f(u_r) - f(u_l),
			\end{aligned}
		\]  
		in conjunction with the Gauß divergence theorem, cf.~figure \ref{fig:HLLidea}. Here $u_{lr}$ shall denote the mean value of $u(x, 1)$ on $[a_l, a_r]$ and is
		\[
			u_{lr} = \frac{a_r a_l - a_l u_l + f(u_l) - f(u_r)}{a_r - a_l}
		\] 
		as apparent from the calculation above.
		Jensens inequality implies
		\begin{equation} \label{eq:HLLjensen}
			\begin{aligned}
			t(a_r - a_l)U(u_{lr}) &= t(a_r - a_l) U \left( \frac{1}{t(a_r - a_l)} \int_{ta_l}^{ta_r} u(x, t) \intd x \right)\\
								 &\leq \frac{t(a_r - a_l)}{t(a_r - a_l)} \int_{ta_l}^{ta_r} U(u(x, t)) = E^{( ta_l, ta_r)}_u(t).
			\end{aligned}
		\end{equation}
		Therefore it follows
		\begin{equation} \label{eq:HLLest}
			\begin{aligned}
			E^\theta(1) - E^\theta(0) \geq& (a_r - a_l) U(u_{lr}) + (M - a_r) U(u_r) + (M + a_l) U(u_l) \\
			&- M (U(u_l) + U(u_r)) \\
			 	 		=& (a_r - a_l) U(u_{lr}) + a_l U(u_l) - a_r U(u_r)
			\end{aligned}
		\end{equation}
		for the entropy dissipation between $t=0$ and $t=1$ and using the invariance under transformations $(x, t) \mapsto (\mu x, \mu t)$ for $\mu > 0$ yields
		\begin{equation}\label{eq:HLLsemiest}
			\begin{aligned}
			\derd{E}{t}|_{t = 0} \geq (a_r - a_l) U(u_{lr}) + a_l U(u_l) - a_r U(u_r) 
			\end{aligned}
		\end{equation}
		for the rate. To calculate the entropy dissipation $s^\theta$ and its speed $\sigma^\theta$ we just have to account for the entropy flowing in and out of the intervall $\theta$ using the entropy flux $F$. This is possible as $u$ is constant to the left of $(ta_l, t)$ and to the right of $(t a_r, t)$. \qed
		\end{proof}
	
		The estimate above does not depend on any grid constant, and reduces to the previous one for $-a_l = c_\text{max} = a_r$, $\lambda c_\text{max} = 1$, and this is the CFL condition for the classical Lax-Friedrich scheme, i.e. both estimates are compatible.
		A Godunov type scheme using the HLL approximate Riemann solver is also compatible with the estimate above. The discrete total entropy after one time-step is still less or equal than the bound given above.
		\begin{figure}
			\begin{tikzpicture}
			% time layer
			\draw [->] (-5.0, 0.0) -- (5.0, 0.0) node [right] {$x$};
			\draw [->] (-5.0, 0.0) -- (-5.0, 2.0) node [above] {$t$};
			% cell boundaries
		%	\draw (-4.0, 0.0) -- (-4.0, 1.0);
			\draw (-3.0, 0.0) -- (-3.0, 2.0) ;
		%	\draw (-2.0, 0.0) -- (-2.0, 1.0);
			\draw (-1.0, 0.0) -- (-1.0, 2.0);
		%	\draw (0.0, 0.0) -- (0.0, 1.0);
			\draw (1.0, 0.0) -- (1.0, 2.0);
		%	\draw (2.0, 0.0) -- (2.0, 1.0);
			\draw (3.0, 0.0) -- (3.0, 2.0);
		%	\draw (4.0, 0.0) -- (4.0, 1.0);
			% next time layer
			%\draw (-5.0, 1.0) -- (5.0, 1.0);
			% speeds
			\draw [dotted] (-3.0, 0.0) -- (-4.0, 2.0);
			\draw [dotted] (-3.0, 0.0) -- (-2.0, 2.0);
			
			\draw [dotted] (-1.0, 0.0) -- (-0.5, 2.0);
			\draw [dotted] (-1.0, 0.0) -- (-0.0, 2.0);
			
			\draw [dotted] (1.0, 0.0) -- (0.5, 2.0);
			\draw [dotted] (1.0, 0.0) -- (1.5, 2.0);
			
			\draw [dotted] (3.0, 0.0) -- (2.0, 2.0);
			\draw [dotted](3.0, 0.0) -- (2.5, 2.0);
			
			% u^HLL graph
			\draw [->] (-5.0, -3.0) -- (5.0, -3.0) node [right] {$x$};
			\draw [->] (-5.0, -3.0) -- (-5.0, -0.5) node [above] {$u(x, t_{n+1})$};
			\draw (-3.0, -3.0) -- (-3.0, -0.0) ;
			\draw (-1.0, -3.0) -- (-1.0, -0.0);
			\draw (1.0, -3.0) -- (1.0, -0.0);
			\draw (3.0, -3.0) -- (3.0, -0.0);
			% all constant states
			\draw 	(-5.0, -2.5) -- (-4.0, -2.5) -- 
					(-4.0, -2.0) -- (-2.0, -2.0) --
					(-2.0, -1.5) -- (-0.5, -1.5) -- 
					(-0.5, -1.0) -- (0.0, -1.0) --
					(0.0, -1.5) -- (0.5, -1.5) --
					(0.5, -2.0) -- (1.5, -2.0) -- 
					(1.5, -2.5) -- (2.0, -2.5) --
					(2.0, -1.5) -- (2.5, -1.5) --
					(2.5, -1.0) -- (5.0, -1.0);
			\end{tikzpicture}
			\caption{A set of noninteracting HLL approximate Riemann solutions.}
			\label{fig:HLLsolv}
		\end{figure}
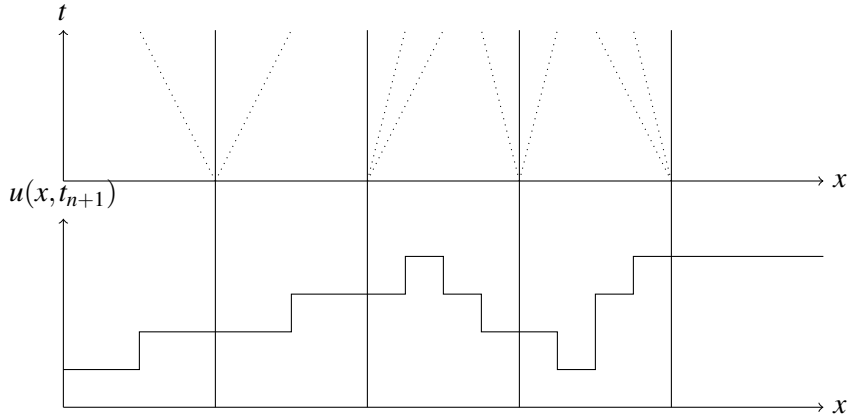
		Let $\lambda c_{\text{max}} \leq \frac 1 2$ hold implying that the Riemann problems do not interact and $u^\mathrm{HLL}(x, t_{n+1})$ be the picewise constant solution of the HLL solver as in figure \ref{fig:HLLsolv}, but not averaged over the cells, while $u^{n+1}_k$ shall be the corresponding cell averages. In this case the total discrete entropy at the next time-step is given by
		\[
			\begin{aligned}
			E^{n+1}_{\mathrm{FV}} =& \sum_k \Delta x U(u_k^{n+1}) \leq \sum_k \frac{\Delta x}{\Delta x} \int_{x_{k-\frac 1 2}}^{x_{k + \frac 1 2}} U(u^{HLL}(x, t)) \intd x\\
			 =& \int_\Omega U(u^{HLL}(x, t)) \intd x \leq E^{n+1}_{u^{\mathrm{HLL}}}.
			\end{aligned}
		\]
		Therefore the discrete entropy of the approximate solution is lower than the entropy of any exact weak solution.
		The next subsection will move beyond first order schemes by generalizing this lower bound to one that also allows smooth solutions instead of piecewise constant ones.

			\subsection{Asymptotic analysis based entropy inequality predictor}
			\begin{figure}
				\begin{subfigure}{0.48\textwidth}
				\begin{tikzpicture}[scale=0.8]
					\draw [->](-3.0, 0) -- (3.0, 0.0) node [right] {$x$};
					\draw [->] (-3.0, 0.0) -- (-3.0, 3.0) node [above] {$u(x, 0)$};
					\draw [dotted] (0.0, 0.0) -- (0.0, 3.0);
					\draw [red] plot [smooth] coordinates {(-3,0) (-2,1) (-1,1) (0,2)};
					\draw [blue] plot [smooth] coordinates {(0,1) (1,0) (2,1) (3,2)};
				\end{tikzpicture}
				\caption{The problem for the generalized entropy inequality predictor. Two piecewise smooth solutions are spliced together and we are interested in the local residual of the entropy equality around the interface.}
				\label{fig:eieq}
					\end{subfigure}
					\begin{subfigure}{0.48\textwidth}
					\begin{tikzpicture}[scale=0.8]
						\draw [->] (-3.0, 0) -- (3.0, 0.0) node [right] {$x$};
						\draw [->] (0.0, 0.0) -- (0.0, 3.0) node [above] {$t$};
					%	\draw [red] plot [smooth] coordinates {(-3,0) (-2,1) (-1,1) (0,2)};
						%\draw [blue] plot [smooth] coordinates {(0,2) (1,0) (2,1) (3,2)};
						\draw plot [smooth] coordinates {(0,0) (-1, 1.5) (-2.5, 3.0)};
						\draw plot [smooth] coordinates {(0,0) (1, 1.5) (1.5, 3)};
						\draw [dashed] (0.0, 0.0) -- (-3.0, 3.0) node [left] {$a_l$};
						\draw [dashed] (0.0, 0.0) -- (2.5, 3.0) node [right]{$a_r$};
							
					\end{tikzpicture}
					\caption{Solutions of generalized Riemann problems lack scaling invariance. Still we assume that there exist bounds $a_l$ and $a_r$ that the waves from the interaction of $u_l(x)$ and $u_r(x)$ do not leave the cone $[ta_l, ta_r]$ for small t.}
					\label{fig:grHLL}
				\end{subfigure}
					\begin{subfigure}{0.48\textwidth}
					\begin{tikzpicture}[scale=0.8]
					\draw [->](-3.0, 0) -- (3.0, 0.0) node [right] {$x$};
					\draw [->] (-3.0, 0.0) -- (-3.0, 3.0) node [above] {$u(x, t)$};
					\draw [dotted] (-1.0, 0.0) -- (-1.0, 3.0) node [above] {$t a_l$};
					\draw [dotted] (0.5, 0.0) -- (0.5, 3.0) node [above] {$t a_r$};
					\draw [red] plot [smooth] coordinates {(-3,0) (-2,1) (-1,1)};
					\draw [blue] plot [smooth] coordinates {(0.5, 1.0) (1,0.25) (2,1) (3,2)};
					\draw (-1.0, 1.25) -- (0.5, 1.25);
					%\draw [dashed] (0.0, 0.0) -- (-3.0, 3.0) node [left] {$a_l$};
					%\draw [dashed] (0.0, 0.0) -- (2.5, 3.0) node [right]{$a_r$};
					\end{tikzpicture}
					\caption{The assumed solution used in the generalized entropy inequality predictor. Note that this solution follows the HLL idea of assuming a constant function in the wedge formed by $t a_l$ and $t a_r$.}
					\label{fig:GHLL}
				\end{subfigure}
				\begin{subfigure}{0.48\textwidth}
					\begin{tikzpicture}[scale=0.8]
					% axes
					\draw [->](-3.0, 0) -- (3.0, 0.0) node [right] {$x$};
					\draw [->] (-3.0, 0.0) -- (-3.0, 3.0) node [above] {$u(x, t)$};
					
					% ansatzes
					\draw [blue] plot [smooth] coordinates {(-2.5, 1.0) (-2.0,0.5) (-1.5,1.5)};
					\draw [blue] plot [smooth] coordinates {(-1.5, 1.25) (-1.0,1.0) (-0.5,0.75)};
					\draw [blue] plot [smooth] coordinates {(-0.5, 1.0) (-0.0,1.5) (0.5,1.0)};
					\draw [blue] plot [smooth] coordinates {(0.5, 1.0) (1.0,0.75) (1.5,1.25)};
					\draw [blue] plot [smooth] coordinates {(1.5, 0.5) (2.0,0.75) (2.5,1.5)};
					
					% cell edges
					\draw [dotted] (-2.5, 0.0) -- (-2.5, 2.5) node [above] {$x_{k - \frac 5 2}$};
					\draw [dotted] (-1.5, 0.0) -- (-1.5, 2.5) node [above] {$x_{k-\frac 3 2}$};
					\draw [dotted] (-0.5, 0.0) -- (-0.5, 2.5) node [above] {$x_{k - \frac 1 2}$};
					\draw [dotted] (0.5, 0.0) -- (0.5, 2.5) node [above] {$x_{k+\frac 1 2}$};
					\draw [dotted] (1.5, 0.0) -- (1.5, 2.5) node [above] {$x_{k + \frac 3 2}$};
					\draw [dotted] (2.5, 0.0) -- (2.5, 2.5) node [above] {$x_{k + \frac 5 2}$};
					
					% cell centers
					\draw (-2.0, 0.15) -- (-2.0, -0.15) node [below] {$x_{k-2}$};
					\draw (-1.0, 0.15) -- (-1.0, -0.15) node [below] {$x_{k-1}$};
					\draw (0.0, 0.15) -- (0.0, -0.15) node [below] {$x_{k}$};
					\draw (1.0, 0.15) -- (1.0, -0.15) node [below] {$x_{k-1}$};
					\draw (2.0, 0.15) -- (2.0, -0.15) node [below] {$x_{k-2}$};
					
					% Overlap
					\draw (-2.1, -0.7) -- (-2.1, -1.0) -- (-1.5, -1.0) node [below] {$\theta_{k-\frac 3 2}$} -- (-0.9, -1.0) -- (-0.9, -0.7);
					\draw (-1.1, -1.7) -- (-1.1, -2.0) -- (-0.5, -2.0) node [below] {$\theta_{k-\frac 1 2}$} -- (0.1, -2.0) -- (0.1, -1.7);
					\draw (-0.1, -0.7) -- (-0.1, -1.0) -- (0.5, -1.0) node [below] {$\theta_{k+\frac 1 2}$} -- (1.1, -1.0) -- (1.1, -0.7);
					\draw (0.9, -1.7) -- (0.9, -2.0) -- (1.5, -2.0) node [below] {$\theta_{k+\frac 3 2}$} -- (2.1, -2.0) -- (2.1, -1.7);
					\end{tikzpicture}
					\caption{Application of the entropy inequality predictor to an open overlap $\theta_{k + \frac 1 2}$ of the domain - centered on the discontinuities.}
					\label{fig:eieqApp}
				\end{subfigure}
				\caption{Construction and application of the generalized HLL dissipation estimate.}
			\end{figure}
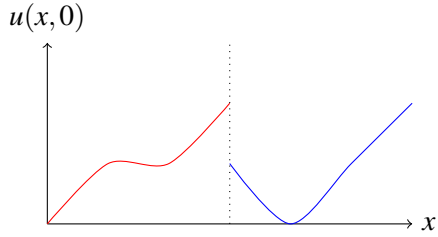
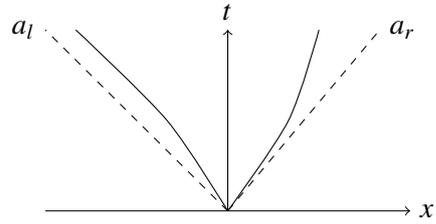
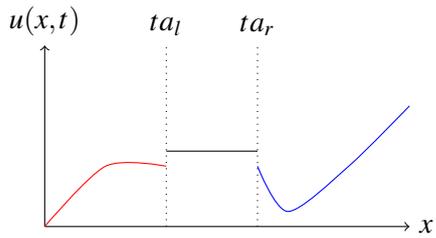
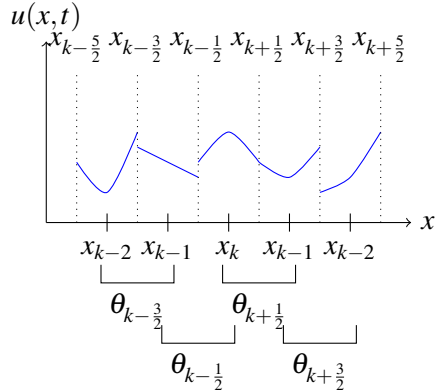
			The entropy inequality predictor in this section will be based on an asymptotic analysis of the problem described in figure \ref{fig:eieq}, i.e. two smooth solutions splined together at an interface. An obstacle lies in the missing self-similarity. This is a difference to the previous part where the self-similarity of the initial condition and assumed self-similarity of the solution induced the existence of a self-similar, i.e. constant, speed of the entropy dissipation. We will therefore try to approximate
			\[
					s^\theta(t_1, t_2) =  \int_{t_1}^{t_2} \int_\theta \derive F x + \derive U t \intd x \intd t
			\]
			for reasonably small $\abs{t_2 - t_1}$ and the discontinuity at the interface in the interior of $\theta$. The schemes in which we  will use these entropy inequality predictors should converge with high orders for smooth solutions, necessitating a convergence of the predictor to zero with a high order for smooth solutions. This convergence is also dictated by the entropy equality for smooth solutions. 
			If $u_l(x)$ and $u_r(x)$ are piecewise constant this problem is already solved by the methods described in the last subsection. We will therefore now reiterate through the proof of lemma \ref{lem:HLLspeed} assuming that $u_l(x)$ and $u_r(x)$ are smooth functions. The missing self-similarity of Generalized Riemann problems \cite{BA2011GRP}, cf.~figure \ref{fig:grHLL},
			defies the existence of the speed estimates $a_l$ and $a_r$, and we therefore just assume that these speed estimates exist for small times. Further we assume that for small times the solutions left of $(ta_l, t)$ and right of $(ta_r, t)$ remain smooth, as no waves from the interaction arrive there and $u_l(x), u_r(x)$ have bounded derivatives.
		
			The average value $u_{lr}$ shall be determined by applying the conservation law to the triangle $T = \ch\{(0, 0), (t a_l, t), (t a_r, t)\}$
				\[
			\begin{aligned}
				0 =& \int_T \derive{u}{t} + \derive{f}{x} \intd V(x, t) = \int_{\partial T} \drskp {\begin{pmatrix} f \\ u \end{pmatrix}}{ n} \intd O(x, t) \\
				=& \int_{t a_l}^{t a_r} \drskp {\begin{pmatrix} f(u) \\ u(x, t) \end{pmatrix}}{ \begin{pmatrix} 0 \\ 1  \end{pmatrix}} \intd x
				+ \int_0^t \drskp {\begin{pmatrix} f(u) \\ u(\tau a_l, \tau) \end{pmatrix}}{ \begin{pmatrix} -1 \\ a_l \end{pmatrix}} \intd \tau \\
				&+ \int_0^t \drskp {\begin{pmatrix} f(u) \\ u(\tau a_r, \tau) \end{pmatrix}}{ \begin{pmatrix} 1 \\ -a_r  \end{pmatrix}} \intd \tau \\
				=& \underbrace{\int_{ta_l}^{ta_r} u(x, t) \intd x}_{t(a_r - a_l)u_{lr}} 
				+ \int_0^t f(u_r(\tau a_r, \tau)) - f(u_l(\tau a_l, \tau)) \intd \tau \\
				&- \int_{t a_l}^0 U(u_l(x, 0)) \intd x - \int_{0}^{t a_r} U(u_r(x, 0)) \intd x.
			\end{aligned}
			\] 
			Dividing this equation by $t$ and going over to the limit $t \to 0$ results in 
			\[
			\begin{aligned}
					\frac{\int_0^t f(u_r(\tau a_r, \tau)) - f(u_l(\tau a_l, \tau)) \intd \tau} {t} &\xrightarrow{t \to 0} f(u_r(0, 0)) - f(u_l(0, 0)), \\
					 \frac{\int_{t a_l}^0 U(u_l(x, 0)) \intd x - \int_{0}^{t a_r} U(u_r(x, 0)) \intd x}{t}& \xrightarrow{t \to 0} a_l U(u_l(0, 0)) - a_r U(u_r(0, 0))
			\end{aligned}
			\]
			using the continuity of the integrands and the mean value theorem of integration \cite{Lax1976Calculus}. Therefore it follows
			\[
				u_{lr}(t) \xrightarrow{t \to 0} \frac{a_r u_r(0) - a_l u_l(0) + f(u_l( 0)) - f(u_r( 0))}{a_r - a_l}
			\]
			for vanishing $t$. Equation \eqref{eq:HLLjensen} stays also valid in the case of piecewise polynomial functions as initial conditions and for small $t > 0$. We can therefore conclude that a generalization of equation \eqref{eq:HLLest} holds in the form
			\[
			\begin{aligned}
				E^\theta(t) - E^\theta(0) \geq& t(a_r - a_l) U(u_{lr}) + \int_{-M}^{t a_l} U(u(x, t)) \intd x + \int_{t a_r}^M U(u(x, t)) \intd x\\
				 &- \int_{-M}^M U(u(x, 0)) \intd x.
			\end{aligned}
			\]
			Accounting for the entropy flowing in and out of $[-M, M]$ yields
			\[
			\begin{aligned}
				s^\theta(0, t) \geq& t(a_r - a_l) U(u_{lr}) + \int_{-M}^{t a_l} U(u(x, t)) \intd x + \int_{t a_r}^M U(u(x, t)) \intd x\\
				&- \int_{-M}^M U(u(x, t)) \intd x + \int_0^t F(u(-M, \tau)) - F(u(M, \tau)) \intd \tau.
				\end{aligned}
			\]	
			Applying the entropy equality to the subdomains $[-M, ta_l] \times [0, t]$ and $[ta_r, M] \times [0, t]$
			\[
			\int_{-M}^{t a_l} U(u(x, t)) \intd x - \int_{-M}^{t a_l} U(u(x, 0)) \intd x = \int_0^{t} F(u(-M, \tau)) - F(u(t a_l, \tau)) \intd \tau,
			\] 
			that holds for small $t > 0$ because the solution stays smooth in the subdomains, allows us to restate this as
			\[
			\begin{aligned}
			s^\theta(0, t) \geq t(a_r - a_l) U(u_{lr}) - \int_{ta_l}^{ta_r} U(u(x, t)) \intd x 
				 - \int_0^t F(u(t a_l, \tau)) - F(u(t a_r, \tau)) \intd \tau.
			\end{aligned}
			\]		
			Dividing by $t$ and going over to the limit, using the limit of $u_{lr}$ and once more the mean value theorem, shows in this case also
			\begin{equation} \label{eq:mepHLL}
			\sigma^\theta \geq (a_r - a_l) U(u_{lr}) - a_r U(u_r(0)) + a_l U(u_l(0)) - F(u_l(0)) - F(u_r(0)).
			\end{equation}
			
			A significant problem of the derivation above lies in the fact that one can only estimate the entropy dissipation speed in the interval $\theta = (-M, M)$, but not in $(-M, 0)$ as the true dissipation can be located anywhere in the cone $[ta_l, ta_r]$. As the cells in our numerical tests will be layed out as in figure \ref{fig:eieqApp}
			\[
				\T = \set{T_k = \left[x_{k-\frac 1 2}, x_{k + \frac 1 2}\right]}{k \in \Z}, \quad x_{k - \frac 1 2} < x_{k+ \frac 1 2}
			\]
			is a suitable set of overlapping open intervals
			\[
				\Theta = \set{\theta_{k + \frac 1 2} = (x_{k} - \epsilon, x_{k+1} + \epsilon)}{k \in \Z}, \quad x_{k} = \frac{x_{k-\frac 1 2} + x_{k + \frac 1 2}}{2}.
			\]
			We are therefore left with the problem of how to split this dissipation onto the two neighboring cells that have overlap with $\theta_{k + \frac 1 2}$.
			This problem will be handled below in section \ref{sec:CorIP}.
			
			\subsection{Accounting for aliasing errors}
			In \cite{klein2023stabilizing} one of the findings in the numerical tests section was that the entropy dissipation of the numerical solutions started already shortly prior a real entropy dissipating discontinuity formed. This was attributed to the fact that while the entropy of the exact solution is still constant as long as the solution is smooth this exact solution will in general not be representable in our ansatz space. It is therefore wise to dissipate entropy to arrive at a function that still lies in our space, and certainly better than selecting an ansatz function that has more entropy than the true solution. A similar issue could be the fact that in the $\Leb^p$ norms, for $p < \infty$, near each piecewise continuous solution $u^T$ lies a $\CC^\infty$ function that can be constructed via mollification. Therefore an infinitely small perturbation of $u^T$ in the usual norms leads to a vanishing entropy dissipation. Or, put differently, the dissipation bound as a functional is discontinuous in the $L^p$ spaces. While unsatisfactory let us remark that the functional is better behaved with respect to the $BV$ semi norms. The discontinuity of the entropy dissipation bound is problematic with under-resolved solutions where a lucky, or in this case better to be considered unlucky, too smooth approximation of the solution in our piecewise polynomial spaces induces wrong, i.e. too conservative entropy dissipation predictions.
			
				 We are therefore interested in allowing our entropy inequality predictor to be also greedy, or one could say pessimistic, with respect to an under-resolved solution. The key to this strengthening is the following lemma.
			\begin{lemma}[Order of the entropy dissipation bound] \label{lem:eds}
				The maximal entropy dissipation prediction \eqref{eq:mepHLL} of a Riemann problem for a smooth flux function with smooth entropy-entropy flux pair vanishes quadratically with the jump of $u$ at the interface
				\[
					\abs{\sigma^\theta} \in \bigO (\norm {u_l - u_r}^2).
				\]
				
				\end{lemma}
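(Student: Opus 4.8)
The plan is to read the right-hand side of \eqref{eq:mepHLL} as a function of the two interface states alone,
\[
	\Phi(u_l,u_r) \;=\; (a_r-a_l)\,U(u_{lr}) + a_l\,U(u_l) - a_r\,U(u_r) + \bigl(F(u_r)-F(u_l)\bigr),\qquad u_{lr}=\frac{a_r u_r - a_l u_l + f(u_l)-f(u_r)}{a_r-a_l},
\]
where the bracketed term is the net entropy outflow through $\partial\theta$ and the signal-speed bounds $a_l,a_r$ may be held fixed (or allowed to vary with the states inside a compact range with $a_r-a_l$ bounded below, which turns out to play no role). Since $\sigma^\theta\le 0$ by the classical entropy inequality \eqref{eq:ceieq} and $\sigma^\theta\ge\Phi(u_l,u_r)$ by Lemma~\ref{lem:HLLspeed}, it suffices to prove $\Phi(u_l,u_r)\in\bigO(\norm{u_l-u_r}^2)$ as $\norm{u_l-u_r}\to 0$. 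I would get this from a second-order Taylor expansion of $\Phi$ about the diagonal $u_l=u_r$, after showing that both the value and the full first differential of $\Phi$ vanish there, so that only the quadratic remainder survives.

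Vanishing of the value is immediate: at $u_l=u_r=:u$ one has $u_{lr}=u$, hence $\Phi(u,u)=(a_r-a_l)U(u)+(a_l-a_r)U(u)=0$, and this holds for every admissible pair $(a_l,a_r)$; consequently the $a_l$- and $a_r$-partials of $\Phi$ also vanish on the diagonal, so a state-dependent choice of the speeds contributes nothing to the first differential and we may freeze $a_l,a_r$. For the first differential in the states I would Taylor-expand $U(u_l)$ and $U(u_r)$ about $u_{lr}$ and substitute the balance identity $a_l(u_l-u_{lr})-a_r(u_r-u_{lr})=f(u_l)-f(u_r)$ --- which is just the defining relation for $u_{lr}$ rearranged --- to arrive at
\[
	\Phi(u_l,u_r) \;=\; U'(u_{lr})\bigl(f(u_l)-f(u_r)\bigr) + \bigl(F(u_r)-F(u_l)\bigr) + \bigO\!\left(\norm{u_l-u_{lr}}^2+\norm{u_r-u_{lr}}^2\right).
\]
From the formula for $u_{lr}$ one reads off $\norm{u_l-u_{lr}}+\norm{u_r-u_{lr}}\in\bigO(\norm{u_l-u_r})$ (the only place where $a_r-a_l$ bounded below is used), so the remainder is already $\bigO(\norm{u_l-u_r}^2)$; and the two remaining contributions linearise, about any reference state, to linear terms that are negatives of one another --- the linearisation of $U'(u_{lr})\bigl(f(u_l)-f(u_r)\bigr)$ cancels the linearisation of the boundary flux $F(u_r)-F(u_l)$ exactly by the entropy--flux compatibility relation $\derd f u\,\derd U u=\derd F u$. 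Hence $\Phi(u_l,u_r)\in\bigO(\norm{u_l-u_r}^2)$, and with $\Phi\le\sigma^\theta\le 0$ this yields $\abs{\sigma^\theta}\le\abs{\Phi}\in\bigO(\norm{u_l-u_r}^2)$.

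The one genuinely delicate point is this first-order cancellation: it is not an accident but precisely the Mock/Godunov compatibility structure, and it is sensitive to signs --- in particular the boundary contribution in \eqref{eq:mepHLL} has to be the net outflow $F(u_r)-F(u_l)$ through $\partial\theta$, for with the opposite sign one is left with a genuine $\bigO(\norm{u_l-u_r})$ term. Everything else is routine bookkeeping: since $f,U,F\in\CC^2$ the Taylor remainders are indeed quadratic locally uniformly in the reference state, $u_{lr}$ depends smoothly on $(u_l,u_r)$ wherever $a_r\ne a_l$, and squeezing $\sigma^\theta$ between $\Phi$ and $0$ converts the bound on $\Phi$ into the one claimed for $\abs{\sigma^\theta}$.
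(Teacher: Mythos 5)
Your proof is correct, but it reaches the key fact --- the vanishing of the first-order term on the diagonal $u_l=u_r$ --- by a genuinely different mechanism than the paper. The paper's proof is soft: it observes that the predictor is non-positive everywhere (by admissibility of the exact Riemann solution together with Jensen's inequality), vanishes on the diagonal, and is smooth, so every diagonal point is a local maximum, the gradient vanishes there, and the Taylor expansion must start with a negative semi-definite quadratic form. You instead compute the first differential explicitly: expanding $a_lU(u_l)-a_rU(u_r)$ about $u_{lr}$ and using the defining balance $a_l(u_l-u_{lr})-a_r(u_r-u_{lr})=f(u_l)-f(u_r)$ reduces the predictor to $\derd U u(u_{lr})\cdot\left(f(u_l)-f(u_r)\right)+F(u_r)-F(u_l)$ plus quadratic remainders, whose linearizations cancel precisely by the compatibility relation $\derd U u\derd f u=\derd F u$. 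Each route buys something: the paper's argument is shorter but leans on the unproved assertion that the predictor itself (not just the true dissipation) is non-positive, and it gives no structural reason for the cancellation; yours does not need the sign of the predictor at all to get the quadratic order, exposes the Mock/Godunov compatibility as the actual mechanism, and handles the state-dependence of $a_l,a_r$ cleanly by noting that the speed-partials vanish on the diagonal. Your sign remark is also a genuine catch: the boundary term must be the net outflow $F(u_r)-F(u_l)$ for the first-order cancellation to occur, which matches the derivation of the estimate but not the flux combination as printed in \eqref{eq:mepHLL} (nor the one in Lemma \ref{lem:HLLspeed}); with either of those sign patterns a nonzero $\bigO(\norm{u_l-u_r})$ term survives and the stated quadratic order would fail. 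One small caveat: your final squeeze $\abs{\sigma^\theta}\leq\abs{\Phi}$ needs $\Phi\leq\sigma^\theta\leq 0$, i.e.\ exactly the non-positivity and lower-bound properties the paper invokes; this is fine, but note that the lemma is really a statement about the predictor $\Phi$ itself, for which your direct bound $\Phi\in\bigO(\norm{u_l-u_r}^2)$ already suffices.
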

			\begin{proof}
				As the entropy inequality holds it is clear that the entropy dissipation is non-positive in the sense of distributions. As we only allow entropy dissipative solutions the entropy dissipation on $\theta$ is a non-positive constant for a fixed jump. On the contrary \eqref{eq:mepHLL} has to be zero for $u_l = u_r$ and is smooth, implying that the line $u_l = u_r$ consists of local maxima. Therefore, a power expansion of \eqref{eq:mepHLL} around $u_l$ in $u_r$ has to be of the form
				\[
					\sigma^\theta(u_l, u_r) = \rskp{u_l - u_r} {H (u_l - u_r)} + \bigO{\norm {u_l - u_r}^3}
				\]
				with a negative semi-definite Hessian $H \in \R^{m \times m}$. This proofs the claim. \qed
				\end{proof}
				We are therefore in the relaxing position that even if our approximations of $u_l, u_r$ only satisfy $\norm{u_l - u_r} \in \bigO ((\Delta x)^p)$ the corresponding estimate will converge significantly faster with $\sigma^\theta(u_l, u_r) \in \bigO ((\Delta x)^{2p})$. Our basic DG method predicts values for our solution $u^T$ in a Hilbertspace that is spanned by polynomials on every cell. In this case a suitable orthonormal basis is spanned by Legendre polynomials and the limits of these basis representations are $\Leb^2$ functions. But as explained before our functional is not continuous on $\Leb^2$ and our ansatz $u^T$ is only an approximation of a projection of the true solution onto our ansatz space. We can therefore try to exploit different projections of our ansatz, especially projections that assume less regularity of $u^T$, and estimate our entropy dissipation with the strongest one encountered in all of these different approximations of $u_l$ and $u_r$. A natural choice for projections on spaces assuming less regularity are projections on lower order polynomials. As the Legendre polynomials on each cell when truncated up to polynomial $p$ are an orthogonal basis of the polynomials with degree less than or equal to $p$, is the orthogonal projection onto these spaces given by discarding the higher order coefficients in the Legendre expansion of $u^T$. We can truncate down to order $p-1$ by discarding the highest coefficient  and still achieve a convergence order of at least $q = 2p-2 > p$ of our entropy inequality predictor for $p>2$.  This can be summed up in the following procedure used above order $p = 2$.
				\begin{itemize}
					\item Assign $\sigma^{\theta_{k + \frac 1 2}}_p = \sigma^{\theta_{k + \frac 1 2}}_{u(x, t)}$.
							\item Project the ansatz $u^T$ in every cell onto $V^{T, p-1}$ using an orthonormal projection \[u^{T, p-1} = \op_{V^{T, p-1}} u(\cdot, t).\]
							\item  Assign $\sigma^{\theta_{k + \frac 1 2}}_{p-1} = \sigma^{\theta_{k + \frac 1 2}} _{ u^{p-1}(\cdot, t) }$.
						
					\item Use $\min\left(\sigma^{\theta_{k + \frac 1 2}}_p,\sigma^{\theta_{k + \frac 1 2}}_{p-1}\right)$ as entropy inequality prediction.
				\end{itemize}

	\section{Suitable dissipation directions and filtering} \label{sec:dispd}
	\begin{figure}
	\begin{tikzpicture}
	\filldraw (0.0, 0.0) circle (0.1) node [left] {$u^T$};
	
	% steepest entropy descent direction and line of constant entropy
	\draw [->](0.0, 0.0) -- (1.0, 1.0) node [right] {$- \derd {U} {u}$};
	\draw (-3.0, 3.0) -- (1.5, -1.5) node [right] {$E_u = \text{const.}$};
	
	% True direction
	\draw [->] (0.0, 0.0) -- (-2.0, 3.0) node [right]{$-\derive f x$};
	% Vanilla DG direction
	\draw [->] (0.0, 0.0) -- (-3.0, 2.5) node [left] {$\derd {u^T}{t}$};
	% Subcell descent
	\draw [->] (0.0, 0.0) -- (1.5, 0.7) node [right] {$\nabla^2 u^T$};
	% Filtering
	\draw [->] (0.0, 0.0) -- (0.7, -1.0) node [left] {$\nabla^8 u^T $};
	\end{tikzpicture}
	\caption{Use of alternative dissipation directions. The direction $-\derive{f}{x}$ shall denote the $\Leb^2$ projection of the exact solutions' derivative $\derive u t$ onto $V$. Our approximation $\derd  {u^T} t$ is not entropy dissipative in this example and should be corrected into the entropy dissipative half-space characterized by the normal $-\derd U u$. The direction $\nabla^2 u$ that stems from a discretization of the heat kernel is suitable for this correction and has additional benefits compared to $-\derd U u$. While the diffusion also has a smoothing effect the addition of $-\derd U u$ can even result in a sharpening effect. Higher even derivatives like $\nabla^8 u$ will smooth the solution but will not result in a dissipation for all entropies.}
	\end{figure}
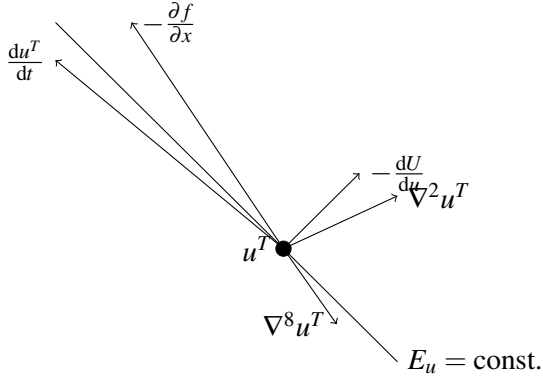

After deriving approximations for the entropy dissipation needed we will now determine how to correct the time derivative of the DG scheme to dissipate the amount of entropy needed. At the same time the resulting scheme is hopefully still high order accurate for entropy conservative solutions. In the scalar case the direction of the steepest descent of the entropy, corrected for conservation, was used for this purpose. This approach incurs several problems:
\begin{itemize}
	\item The direction of steepest entropy descent has in general no smoothing/filtering effect. 
	\item Proving in the previous publication that a correction in the steepest descent direction with the length taken from the error indicator results in enough entropy dissipation was possible but resulted in highly technical arguments\cite{klein2023stabilizing}. 
	\item Dissipation stems from the viscous and parabolic history of hyperbolic conservation laws. A viscous flux
		\[
				f_{\epsilon}\left (u, \derive{u}{x}\right ) = f(u) - \epsilon \derive{u}{x}
		\]
		associated with a viscous regularization of a hyperbolic conservation law is proportional to the gradient of the solution for fixed viscosity and proportional to the gradient of the solution for constant viscosity. If a component of $u$ is smooth with a low magnitude of the first and second derivative the viscous flux of this component will also only differ from the hyperbolic flux by a small margin. If our scheme is corrected with the steepest entropy descent direction one can ask if this correction can be expressed using some viscosity distribution $\epsilon(x)$ in the domain. This will be false in general. Even worse, the steepest gradient descend of the entropy can't be bounded using the first derivative of the respective component of the vector valued function $u(x, t)$, incurring an infinitely large viscosity.
\end{itemize}
All of the above reasons motivate us to devise alternative directions for the entropy correction. These alternative directions should have the following properties
\begin{itemize}
	\item The dissipation direction should have a filtering effect, i.e. when the direction only is used high order modes should be dissipated.
	\item The direction should dissipate entropy. 
	
	\item The dissipation should stem from a viscosity added to the hyperbolic flux.
\end{itemize}

Our new correction directions will be based on the construction of filters, i.e. operators that can regularize a solution $u$. A filter will in our case be a special Hilbert-Schmidt operator $K$ \cite{LaxFun}. 

\begin{definition}[Filter]
	An operator $K: \Leb^2(\Omega) \to \Leb^2(\Omega)$ is said to be a filter if it is an integral operator whose pointwise evaluation results in a weighted average, i.e. 
	\[
	[K u] (x) = \int_\Omega  k(x, y) u(y) \intd y, \quad \text{with } \forall x \in \Omega: \int_\Omega k(x, y) \intd y = 1.
	\] 
	is satisfied and the kernel $k$ is of bounded Hilbert-Schmidt norm.
\end{definition}
We are especially interested in conservative filters as they do not destroy the conservation of our basic schemes when they are applied on a per cell basis.
\begin{lemma}[Conservative filter]
	A filter $K: \Leb^2(\Omega) \mapsto \Leb^2(\Omega)$ is conservative
	\[
		\int_\Omega [K u](x) \intd x = \int_\Omega u(x) \intd x
	\]
	 if it can be written as an integral operator with a kernel with mass one, i.e.
	\[
		[K u] (x) = \int_\Omega u(y) k(x, y) \intd y, \quad \text{with } \forall y\in \Omega: \int_\Omega k(x, y) \intd x = 1
	\] 	
	\end{lemma}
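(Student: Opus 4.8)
The plan is to compute $\int_\Omega [Ku](x)\intd x$ directly from the integral representation and to interchange the order of integration. Writing out the definition gives
\[
\int_\Omega [Ku](x)\intd x = \int_\Omega \int_\Omega u(y)\, k(x,y) \intd y \intd x ,
\]
and once Fubini's theorem is available this equals $\int_\Omega u(y)\left(\int_\Omega k(x,y)\intd x\right)\intd y$. Inserting the hypothesis $\int_\Omega k(x,y)\intd x = 1$ for every $y\in\Omega$ collapses the inner integral to $1$ and leaves $\int_\Omega u(y)\intd y$, which is exactly the claimed conservation identity. Conceptually this is the ``transpose'' of the averaging property built into the definition of a filter: the normalisation $\int_\Omega k(x,y)\intd y = 1$ makes $K$ a pointwise weighted average, while the normalisation $\int_\Omega k(x,y)\intd x = 1$ makes $K$ conservative.

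The only actual work is justifying the interchange, i.e.\ verifying the Tonelli hypothesis $\int_\Omega\int_\Omega \abs{u(y)}\,\abs{k(x,y)}\intd x\intd y < \infty$ (measurability of $(x,y)\mapsto u(y)k(x,y)$ being clear as a product of measurable functions). Here I would invoke that, being a filter, $K$ is a Hilbert--Schmidt operator, so its kernel satisfies $k\in\Leb^2(\Omega\times\Omega)$; combined with $u\in\Leb^2(\Omega)$ and $\abs{\Omega}<\infty$ (in the applications $K$ acts cell-by-cell, so $\Omega$ may be replaced by a bounded cell $T$), two applications of the Cauchy--Schwarz inequality yield
\[
\int_\Omega \abs{u(y)}\int_\Omega \abs{k(x,y)}\intd x\intd y \;\leq\; \abs{\Omega}^{1/2}\int_\Omega \abs{u(y)}\,\norm{k(\cdot,y)}_{\Leb^2(\Omega)}\intd y \;\leq\; \abs{\Omega}^{1/2}\,\norm{u}_{\Leb^2(\Omega)}\,\norm{k}_{\Leb^2(\Omega\times\Omega)} < \infty .
\]
Hence Fubini applies and the computation of the first paragraph is legitimate.

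The ``main obstacle'' is therefore purely this integrability bookkeeping rather than any genuine difficulty: once the Hilbert--Schmidt assumption secures absolute integrability of $(x,y)\mapsto u(y)k(x,y)$, the conservation identity is immediate. If one preferred to dispense with the finite-measure assumption on $\Omega$, an alternative route is to observe that the stated normalisation says precisely that the adjoint kernel has row-mass one, so $K^{\ast}$ maps the constant function $1$ to $1$, and then $\skp{Ku}{1} = \skp{u}{K^{\ast}1} = \skp{u}{1}$; but the Fubini argument above is the most direct and self-contained.
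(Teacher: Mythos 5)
Your proof is correct and takes essentially the same route as the paper: both compute $\int_\Omega [Ku](x)\intd x$, swap the order of integration by Fubini, and use the unit mass of $k(\cdot,y)$ in the first variable. Your additional verification of the Tonelli hypothesis via the Hilbert--Schmidt bound is a detail the paper omits, but it does not change the argument.
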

\begin{proof}
	Using Fubini's theorem shows
	\[
		\int_\Omega [K u](x) \intd x = \int_\Omega \int_\Omega k(x, y) u(y) \intd y \intd x = \int_\Omega \underbrace{\int_\Omega k(x, y) \intd x}_{=1} \, u(y) \intd y = \int_\Omega u(y) \intd y
	\]
	in this case. \qed
	\end{proof}
Please note that the weighted average property is stated using the integration w.r.t. the second variable while the conservation results from the unit measure in the first variable.
Obviously a convolution with a convolution kernel satisfying
\[
	\int_\R k(y) \intd y = 1
\]
satisfies both as $k(x, y) = k(x -y)$ holds in this case, but not every operator satisfying these properties is a convolution. Especially when one is interested in bounded domains convolutions are not an option, but there still exist suitable smoothing operators.

\begin{theorem}[Universally dissipative filters]
	A conservative filter $K$ is dissipative for all convex entropies $U$,
	\[
	E_{K u} = \int_\Omega U([K u](x)) \intd x \leq \int_\Omega U(u(x)) \intd x = E_u,
	\]
	 if it can be written as a conservative filter with a positive kernel, i.e.
	\[
		[K u] (x) = \int_\Omega  k(x, y) u(y) \intd y 
	\]
	with $\forall x, y: k (x, y) \geq 0$.
\end{theorem}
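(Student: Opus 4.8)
The plan is to reduce the statement to a pointwise application of Jensen's inequality followed by Fubini's theorem, mirroring the proof of the conservative filter lemma. First I would fix $x \in \Omega$ and use that $K$ is a filter: the kernel slice $y \mapsto k(x, y)$ is non-negative and satisfies $\int_\Omega k(x, y) \intd y = 1$, so it is the density of a probability measure $\mu_x$ on $\Omega$, and $[Ku](x) = \int_\Omega u(y) k(x, y) \intd y$ is the barycentre (mean value) of $u$ with respect to $\mu_x$.

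Second, since $U : \R^m \to \R$ is convex, the vector-valued Jensen inequality gives, for every $x$ for which the right-hand side is defined,
\[
	U\big([Ku](x)\big) = U\!\left( \int_\Omega u(y)\, k(x, y) \intd y \right) \leq \int_\Omega U\big(u(y)\big)\, k(x, y) \intd y .
\]
I would note that $x \mapsto U([Ku](x))$ is measurable and, because $k \geq 0$, the iterated integral $\int_\Omega \int_\Omega U(u(y)) k(x,y) \intd y \intd x$ is well defined in $[-\infty, +\infty]$ (splitting $U$ into its positive and negative parts), so Tonelli/Fubini applies. Integrating the Jensen bound over $x \in \Omega$ and exchanging the order of integration,
\[
	\int_\Omega U\big([Ku](x)\big) \intd x \leq \int_\Omega \left( \int_\Omega k(x, y) \intd x \right) U\big(u(y)\big) \intd y = \int_\Omega U\big(u(y)\big) \intd y ,
\]
where the last equality is exactly the conservation (mass one in the first variable) property of the kernel used in the conservative filter lemma. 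This is precisely $E_{Ku} \leq E_u$.

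The main obstacle is not the chain of inequalities itself but the integrability bookkeeping: one must ensure that $Ku \in \Leb^2(\Omega)$ (which follows from the bounded Hilbert--Schmidt norm of $k$), that $U \circ u$ and $U \circ (Ku)$ are measurable, and that the integrals involved are never of the indeterminate form $\infty - \infty$. A clean way around this is to first prove the estimate with $U$ replaced by the truncation $U_R = \max(U, -R)$ (or with $u$ restricted to a set of finite measure on which it is bounded), where Fubini is unconditional, and then pass to the limit $R \to \infty$ by monotone convergence; alternatively one simply works with $u \in \Leb^\infty(\Omega)$ and continuous $U$, which already covers the application to the Euler system. I would also remark in passing that the inequality becomes strict as soon as $U$ is strictly convex and, on a set of $x$ of positive measure, $u$ fails to be $\mu_x$-almost-everywhere constant, which is what makes such positive-kernel filters genuinely entropy-dissipative and not merely entropy-non-increasing.
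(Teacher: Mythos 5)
Your proposal is correct and follows essentially the same route as the paper's own proof: pointwise Jensen's inequality using the positive unit-mass kernel slices, followed by Fubini/Tonelli and the conservation property in the first variable. The additional integrability bookkeeping and the remark on strict dissipation are sensible refinements but do not change the argument.
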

\begin{proof}
	Using Jensens inequality \cite{Rudin1966RCA}, the positivity and conservation of the filter allows us to show
	\[
	\begin{aligned}
	\int_\Omega U([K u](x)) \intd x =& \int_\Omega U\left (\int_\Omega  k(x, y) u(y) \intd y \right) \intd x 
	\leq \int_\Omega \int_\Omega  k(x, y)U(u(y)) \intd y \intd x  \\
	=& 
	\int_\Omega \underbrace{\int_\Omega k(x, y) \intd x}_{ = 1} U(u(y))  \intd y = \int_\Omega U(u(y)) \intd y
	\end{aligned}.
	\]
	\qed
	\end{proof}
These theorem shows that the first and second bullet above can be satisfied by an integral operator with a suitable kernel. An example of a dissipation that can be identified with a positive conservative filter is the filtering by the time evolution of
\[
	\derive u t = \epsilon \nabla^2_x u 
\]
on the entire domain as the assorted filter has the heat kernel as kernel function \cite{Evans2010PDE},
\[
	k^t(x, y) = h(x-y, t), \quad	h(x, t) = \frac{\e^{-\frac{\abs{x}^2}{4t}}}{(4 \pi t)^{n/2}}, \quad  t > 0.
\]
Further, this filtering obviously stems from viscosity and has therefore a direct physical interpretation. It is known that while a positive integral operator always dissipates entropy a high order finite-difference implementation will not dissipate all entropies \cite{ranocha2019mimetic} and similar theorems hold for higher even derivatives even in the analytic case. We will therefore outline how to construct a filter that is dissipative in the semidiscrete and fully discrete setting and can therefore be used as a descent direction.
We begin by stating some discrete equivalents of the theorems above and will analyze if usual dissipations/filters satisfy this property. We will assume that $\omega_k \geq 0$ is a positive quadrature rule on the cell $T$ for the rest of the chapter and all notions of conservation for our filters will be centered around being conservative with respect to this quadrature rule. For a general DG method with dense mass matrix a quadrature can be calculated via $\sum_l M_{lk} = \omega_k$, i.e. by entering the constant one into the discretised inner product, but positivity is not guaranteed in general. 
A general view of our plan could be to not discretise the second derivative, but its action as the generator of a Hilbert-Schmidt operator. We will therefore, when given a discrete filter, consider also its (discrete) generator. 

\begin{definition}[Conservative and positive filter generator]
	Let $G \in \R^{(p+1) \times (p+1)}$ be a square matrix. We call this matrix a filter generator if
	\[
		\forall k \in \sset{ 1, \dots, p+1}: \quad \sum_{l=1}^{p+1} G_{kl} = 0 
	\]
	holds. It will be conservative if 
	\[
		\forall l \in \sset{1, \dots, p+1}: \quad \sum_{k=1}^{p+1} \omega_k G_{kl} = 0
	\]
	is satisfied. Further, we call it positive, if
	\[
		\forall l \in  \sset{1, \dots, p+1},\quad \forall k \in \sset{1, \dots, l-1, l+1, \dots, p+1} : \quad G_{kl} \geq 0
	\]
	holds.
	\end{definition}
\begin{definition}[Discrete conservative and positive filter]
	We call a matrix $\Upsilon \in \R^{(p+1) \times (p+1)}$ a filter, if
	\[
			\forall k \in \sset{1, \dots, p+1}: \quad \sum_{l=1}^{p+1} \Upsilon_{kl} = 1
	\]
	holds. It is termed conservative, if
	\[
		\forall l \in \sset{1, \dots, p+1}: \quad \sum_{k=1}^{p+1} \omega_k \Upsilon_{kl} = \omega_l
	\]
	is satisfied. Further, we call it positive, if
	\[
		\forall k, l \in \sset{1, \dots, p+1}:\quad \Upsilon_{kl} \geq 0.
	\]
	\end{definition}
Obviously, the definition of the conservative positive discrete filter mirrors the definition of such a filter in the continuous case using the quadrature rule. The definition of the averaging property on the other hand is not based on the quadrature rule, as this rule is not used when applying the filter pointwise
\[
\upsilon_k = \sum_{l = 1}^{p+1} \Upsilon_{kl} u_l
.\]

Forward Euler steps connect the generators defined above with the filters, as we will see in the lemma below.
\begin{lemma}[Connecting generators and filters] \label{lem:TiFilter}
	It holds 
	\[
		G \text{ conservative as generator } \implies \Upsilon = \Id + \Delta t G \text{ conservative as filter.}
	\]
	Let further $\Delta t \max_l \abs{G_{ll}} \leq 1$. Then it follows
	\[
		G \text{ positive as generator} \implies \Upsilon \text{ positive as filter}.
	\]

	\end{lemma}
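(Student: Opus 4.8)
The plan is to verify each of the three defining properties of a conservative (resp. positive) filter directly from the corresponding property of the generator, substituting $\Upsilon = \Id + \Delta t\, G$ and expanding the sums. The algebra is short; the only subtlety is the diagonal sign control in the positivity implication, which is exactly why the hypothesis $\Delta t \max_l \abs{G_{ll}} \leq 1$ appears.

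First I would check the averaging (row-sum) property, which holds with no size restriction on $\Delta t$. For each $k$,
\[
\sum_{l=1}^{p+1} \Upsilon_{kl} = \sum_{l=1}^{p+1} \left( \delta_{kl} + \Delta t\, G_{kl} \right) = 1 + \Delta t \sum_{l=1}^{p+1} G_{kl} = 1 + \Delta t \cdot 0 = 1,
\]
using the defining relation $\sum_l G_{kl} = 0$ of a generator. Hence $\Upsilon$ is a filter. Next, for conservativity, fix a column index $l$ and compute the weighted column sum
\[
\sum_{k=1}^{p+1} \omega_k \Upsilon_{kl} = \sum_{k=1}^{p+1} \omega_k \delta_{kl} + \Delta t \sum_{k=1}^{p+1} \omega_k G_{kl} = \omega_l + \Delta t \cdot 0 = \omega_l,
\]
where the second sum vanishes precisely because $G$ is a conservative generator. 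This establishes the first implication: $G$ conservative as a generator $\implies \Upsilon$ conservative as a filter.

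For the positivity implication, assume now $\Delta t \max_l \abs{G_{ll}} \leq 1$ and that $G$ is positive as a generator, i.e. all off-diagonal entries $G_{kl}\ge 0$. I split into two cases for the entry $\Upsilon_{kl}$. If $k \neq l$, then $\Upsilon_{kl} = \Delta t\, G_{kl} \geq 0$ since $\Delta t > 0$ and the off-diagonal entry is nonnegative. If $k = l$, then $\Upsilon_{ll} = 1 + \Delta t\, G_{ll}$; here $G_{ll} = -\sum_{j \neq l} G_{lj} \leq 0$ (being minus a sum of nonnegative terms, again by the generator relation and off-diagonal positivity), so $G_{ll} = -\abs{G_{ll}}$ and thus $\Upsilon_{ll} = 1 - \Delta t\, \abs{G_{ll}} \geq 1 - \Delta t \max_m \abs{G_{mm}} \geq 0$ by hypothesis. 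Therefore every entry of $\Upsilon$ is nonnegative, so $\Upsilon$ is a positive filter.

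The only place requiring care — the "main obstacle," though it is mild — is recognizing that the diagonal entry of a positive generator is automatically $\le 0$ (so that $\Delta t\,G_{ll}$ is the term that could push $\Upsilon_{ll}$ below zero) and that the stated CFL-type bound $\Delta t \max_l \abs{G_{ll}} \leq 1$ is exactly the sharp condition preventing this. Everything else is a one-line substitution. \qed
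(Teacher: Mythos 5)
Your proof is correct and follows essentially the same route as the paper's: direct substitution of $\Upsilon = \Id + \Delta t\,G$ into each defining sum, with the off-diagonal/diagonal case split for positivity. The only (harmless) addition is your observation that $G_{ll}\le 0$ follows automatically for a positive generator; the paper simply bounds $\Upsilon_{ll}\ge 1-\Delta t\,\abs{G_{ll}}\ge 0$ directly, which works regardless of the sign of $G_{ll}$.
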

	\begin{proof}
		We begin by showing the conservativity and filter property. It holds
		\[
			\sum_{l=1}^{p+1} \Id_{kl} = 1 \implies \sum_{l=1}^{p+1} \Upsilon_{kl} = \sum_{l=1}^{p+1} (\Id + \Delta t G)_{kl} = 1.
		\]
		As the identity is conservative follows
		\[
			\sum_{k=1}^{p+1} \omega_k \Id_{kl} = \omega_l \implies \sum_{k=1}^{p+1} \omega_k \Upsilon_{kl} = \sum_{k=1}^{p+1} \omega_k (\Id + \Delta t G)_{kl} = \omega_l.
		\]
		The positivity follows as for non-diagonal elements,
		\[
		k \neq l \implies 	(I + \Delta tG)_{kl} = \Delta t G_{kl} \geq 0
		\]
		is satisfied for any positive timestep size while the given restriction is needed to enforce
		\[
			(I + \Delta t G)_{ll} \geq 1 - \Delta t \abs{\Upsilon_{ll}} \geq 0.
		\]
		\qed
		\end{proof}
	It is clear that a discrete filter that is positive and conservative is also dissipative by reiterating through the arguments given above for the continuous case. Sadly, it is also true that while in the continuous case the filter which is generated by the second derivative, i.e. the heat kernel, is positive, the second derivative discretised in our DG method is not a positive generator and also does not generate a positive filter directly. We will therefore show how to design a generator generating an approximation of the heat kernel for forward Euler steps, thereby even allowing to prove the dissipativity of the entropy dissipation operator for finite time steps. The basis will be the heat equation with varying heat conductivity $\alpha(x)$ \cite{Johnson2009FEM}
	\[
		\derive u t = \sum_{k=1}^n \derive{~}{x_k} \alpha(x) \derive{u}{x_k}, \quad \alpha \derive u n\Bigg|_{\partial T} = 0
	\] 
	on the (reference) element in conjunction with Neumann boundary conditions. The Neumann boundary conditions enforce the conservation of the resulting solution operator as any change of the cell mean values must happen through the numerical flux of the basic DG method. Discretising this problem \cite{Johnson2009FEM} with the nodal basis of the basic DG method that is a continuous Galerkin method in this case because a single element is considered, yields
	\begin{equation} \label{eq:HEsol}
		\derd {u^T} t = -M^{-1} Q u^T, \quad Q_{kl} = \int_T \rskp{\derive {\phi_k} x} {\alpha(x) \derive {\phi_l} x} \intd x.
	\end{equation}
	As noted before, in general there exists no $\Delta t > 0$ where $\Id + \Delta t (-M^{-1} Q)$ is a positive operator because the negative elements in $(-M^{-1} Q)$ prohibit it from being a positive generator. Yet the following theorem shows that the exact ODE solution to this problem for a $t > 0$ big enough is in fact eligible as a filter.
	\begin{theorem}
		If the quadrature $\omega$ is exact on $V^T$, the solution of \eqref{eq:HEsol} for a positive initial condition $u_0 \in \R^{p+1}$ satisfies for all $t > 0$
			\begin{itemize}
				\item $\sum_{k = 1}^{p+1} \omega_k u_k(t) = \sum_{k=1}^{p+1} \omega_k u_k(0)$ (Conservation)
				\item $u_k(t) = C_{kl} (t) u_l(0)$ with $\forall k\in\sset{1, \dots, p+1}: \quad \sum_{l = 1}^{p+1} C_{kl} = 1$ (averaging property)
			\end{itemize}
		Further, for a $t > 0$ big enough it follows $\forall k \in \sset{1, \dots, p+1}: \quad u_k(t) \geq 0$.
		\end{theorem}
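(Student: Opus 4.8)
The plan is to treat \eqref{eq:HEsol} as the linear autonomous system it is, so that $u^T(t) = C(t)u_0$ with $C(t) = \exp\of{-t M^{-1}Q}$, and to read off the three assertions from the algebraic structure of $M^{-1}Q$ together with the long-time behaviour of this matrix semigroup.

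First I would isolate the two facts that everything rests on. Since the nodal basis reproduces the constant function, $\sum_l \phi_l \equiv 1$ on the reference element, and differentiating gives $\nabla\of{\sum_l \phi_l} = 0$; hence the $k$-th entry of $Q\mathbf 1$ is $\int_T \alpha(x)\,\nabla\phi_k \cdot \nabla\of{\sum_l \phi_l} \intd x = 0$, i.e. $Q\mathbf 1 = 0$. By the same identity $(M\mathbf 1)_k = \sum_l \int_T \phi_k \phi_l \intd x = \int_T \phi_k \intd x$, while exactness of $\omega$ on $V^T$ together with $\phi_k(\xi_l) = \delta_{kl}$ gives $\int_T \phi_k \intd x = \omega_k$; hence $M\mathbf 1 = \omega$ and $M^{-1}\omega = \mathbf 1$. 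Conservation then follows by differentiating the functional: $\frac{\vd}{\vd t}\of{\sum_k \omega_k u_k(t)} = -\omega^\top M^{-1} Q\, u^T(t) = -(M^{-1}\omega)^\top Q\, u^T(t) = -\mathbf 1^\top Q\, u^T(t) = -(Q\mathbf 1)^\top u^T(t) = 0$, using the symmetry of $Q$ and $M$. The averaging property is immediate from $M^{-1}Q\mathbf 1 = M^{-1}(Q\mathbf 1) = 0$: the constant vector is annihilated by the generator, so $C(t)\mathbf 1 = \exp\of{-tM^{-1}Q}\mathbf 1 = \mathbf 1$, which is exactly the statement that every row of $C(t)$ sums to one.

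The real work is the eventual positivity, and I would argue it spectrally. The matrix $M^{-1}Q = M^{-1/2}\of{M^{-1/2} Q M^{-1/2}} M^{1/2}$ is similar to the symmetric positive semi-definite matrix $M^{-1/2}QM^{-1/2}$, hence diagonalizable with real non-negative eigenvalues; moreover $\ker Q = \lspan(\mathbf 1)$, because $Qv = 0$ forces $0 = v^\top Q v = \int_T \alpha(x)\abs{\nabla v_h}^2 \intd x$ with $v_h = \sum_k v_k\phi_k$, so $\nabla v_h = 0$ on the connected element and $v_h$, hence $v$, is constant. Thus $0$ is a simple eigenvalue of $M^{-1}Q$, all other eigenvalues are strictly positive, and $C(t) = \exp\of{-tM^{-1}Q}$ converges as $t \to \infty$ to the spectral projection $P_0$ onto $\lspan(\mathbf 1)$. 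Since $M^{-1}Q$ is self-adjoint with respect to the inner product $\skp{u}{v}_M = u^\top M v$, the projection $P_0$ is $M$-orthogonal, and therefore $P_0 v = \frac{v^\top M\mathbf 1}{\mathbf 1^\top M \mathbf 1}\mathbf 1 = \frac{\sum_k \omega_k v_k}{\sum_k \omega_k}\mathbf 1$. For a positive $u_0$ the positivity $\omega_k > 0$ of the quadrature makes both $\sum_k \omega_k u_{0,k} > 0$ and $\sum_k \omega_k > 0$, so $C(t)u_0 \to c\,\mathbf 1$ with $c > 0$; since the remaining spectral components $\sum_{j \geq 1} \e^{-t\mu_j} P_j u_0$ decay to zero, there is a $t^\ast > 0$ with $\abs{u_k(t) - c} < c/2$, hence $u_k(t) > 0$, for all $k$ and all $t \geq t^\ast$, which is the claim.

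I expect this last step to be the bottleneck, and it also explains why positivity is only claimed for $t$ large rather than for all $t > 0$: one cannot run the usual $M$-matrix / discrete maximum principle argument on the generator, because the off-diagonal entries of $-M^{-1}Q$ are not sign-definite — which is precisely why no finite $\Delta t$ turns $\Id + \Delta t(-M^{-1}Q)$ into a positive filter in the sense of Lemma \ref{lem:TiFilter}. The proof must instead route through the asymptotics of the semigroup and the identification of its limit with the strictly positive, mass-weighted averaging operator. The only other point needing care is that $\ker Q = \lspan(\mathbf 1)$ relies on connectedness of the reference element and on $\alpha$ being bounded below away from zero, which should be recorded among the standing assumptions.
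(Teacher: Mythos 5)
Your proof is correct and follows essentially the same route as the paper: conservation and the unit-row-sum property both come from $Q\mathbf{1}=0$ together with exactness of the quadrature, and eventual positivity is obtained from the spectral decomposition of $-M^{-1}Q$ (simple zero eigenvalue with constant eigenvector, all other eigenvalues strictly positive) so that $C(t)u_0$ converges to the positive weighted average of $u_0$. Your version is in fact slightly more careful than the paper's in making the $M$-weighted self-adjointness and the identification of the limiting projection explicit; the only over-cautious aside is the requirement that $\alpha$ be bounded below away from zero, which the paper's mollifier choice violates at the endpoints but which is not needed, since $\int_T\alpha\,|\partial u/\partial x|^2\,\mathrm{d}x=0$ with $\alpha>0$ on the open interior already forces a polynomial $u$ to be constant.
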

	\begin{proof}
		Entering $v = 1$ into the weak form results in 
		\[
			\int_T \derive u t \intd x = -\int_T \derive 1 x \derive u x \intd x = 0.	
		\]
		As the quadrature is exact for the basis functions the same follows for the discretisation, and this shows the conservation. The matrix $C$ used to describe the solution has the explicit form \cite[sec. 34]{LaxFun}
		\[
			u(t) = \underbrace{\e^{-t M^{-1}Q}}_{C(t)}u(0).
		\]
		Multiplying this matrix with the vector $v \in V$ representing the function $1$ from the right reveals
		\[
			M^{-1}Q v = 0 \implies \e^{-t M^{-1}Q} v = \Id v = v.
		\]
		This already shows the second result as the nodal representation of $C(t)$ must have unit row sum.
		The matrix $-Q$ is negative semi-definite, the $v$ vector is in its null space. If another linearly independent $u \in V$ would be in its null space it would follow 
		\[
			\rskp{u}{Q u} = \skp{\derive u x}{\alpha \derive u x}_T = \int_T \alpha(x) \abs{\derive{u} x}^2 \intd x = 0
		\]
		and this is a contradiction to $\derive u x \neq 0$, as $u$ was assumed non-constant. Therefore, there exists an orthonormal eigenvalue decomposition of the discretisation whose eigenvalues, apart from the constant eigenfunction $\psi_1 = v$ with eigenvalue $\lambda_1 = 0$, are bounded away from zero,
		\[
		\forall k \in \sset{1, \dots, p+1}: \quad -M^{-1} Q \psi_k = \lambda_k \psi_k
		.\] 
		We assume that the eigenvectors are sorted by increasing absolute value of the corresponding eigenvalues,
		\[
			0 = \lambda_1< \abs{\lambda_2} \leq \abs{\lambda_3} \leq \dots \leq \abs{\lambda_{p+1}}.
		\] The solution
		\[
			u(t) = \sum_{k=1}^{p+1} \e^{- \lambda_k t} \psi_k \skp{\psi_k}{u(0)}
		\]
		therefore converges to the average of $u(0)$, as 
		\[
			\norm {u(t) - \psi_0\skp{\psi_0}{u(0)}}^2 = \norm{ \sum_{k = 2}^{p+1} \e^{- \lambda_k t} \psi_k \skp{\psi_k}{u(0)}}^2 \leq \e^{-2\lambda_1 t} \norm{u_0}^2
		\]
		holds. Because a positive initial condition has a positive average the solution will converge to this positive average. \qed
		\end{proof}
		Using the theorem above we can construct filters $\Upsilon$ simply by calculating the matrix $C(t) = G$ used in the proof above. This matrix which maps an initial state onto the solution at time $t$ is always a conservative filter, and when $t$ is large enough also positive. In the implementation the suitable $t$ was found using a bisection algorithm. Using $\Upsilon = (C(t) - \Id)/t$ the corresponding generator can be found. We note in passing that numerous other possibilites exist to define a positive conservative filter as defined above, but that the method given above defines a filter than can be associated with viscosity.
		
		\begin{lemma}\label{lem:alwdisp}
			Assume the null space of $G$ consists only of constants. Then for a non-constant $u$ and a strictly convex entropy $U$ it holds
			\[
				\skp{\derd U u}{G u}_{T, \omega} < 0.
			\]
			If $U$ is just convex only »$\leq$«  applies in the equation above.
			\end{lemma}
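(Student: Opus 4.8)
The plan is to connect the discrete pairing $\skp{\derd U u}{G u}_{T,\omega}$ to the continuous computation underlying the universally dissipative filter theorem, via the exponential $C(t) = \e^{tG}$ that $G$ generates. The key observation is that $\derd{}{t}\big|_{t=0}\skp{\derd U u}{C(t) u}_{T,\omega} = \skp{\derd U u}{G u}_{T,\omega}$, so it suffices to show that $t \mapsto E^\omega(t) := \skp{U}{C(t)u}_{T,\omega} = \sum_k \omega_k\, U\big((C(t)u)_k\big)$ is non-increasing (strictly, at $t=0$, when $u$ is non-constant and $U$ strictly convex) and then differentiate at $t=0$.

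First I would record that, since the null space of $G$ consists only of constants and $G$ is conservative (so $\omega^\top G = 0$) and positive off the diagonal, the previous theorem applies: $C(t) = \e^{tG}$ is for every $t>0$ a conservative filter (unit row sums, preserves the $\omega$-weighted mean) and, for $t$ large enough, a positive filter. For such large $t$, Jensen's inequality applied exactly as in the proof of the universally dissipative filter theorem — using row-stochasticity $\sum_l C_{kl}=1$ together with column-conservation $\sum_k \omega_k C_{kl} = \omega_l$ — gives $E^\omega(t) = \sum_k \omega_k U\big(\sum_l C_{kl} u_l\big) \le \sum_k \omega_k \sum_l C_{kl} U(u_l) = \sum_l \omega_l U(u_l) = E^\omega(0)$. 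The semigroup property $C(t+s) = C(t)C(s)$ then upgrades this to monotonicity on all of $(0,\infty)$: for any $t_2 > t_1 > 0$ pick $N$ with $N t_1 \ge$ the positivity threshold, so $C(N t_1)$ is a positive conservative filter, and apply the same Jensen estimate with $C(Nt_1)$ acting on $C(t_1)u, \dots$; chaining these shows $E^\omega$ is non-increasing for $t>0$, hence (by continuity) also non-increasing at $t=0$. Therefore the right derivative at $0$ is $\le 0$, i.e. $\skp{\derd U u}{G u}_{T,\omega} \le 0$.

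For the strict inequality under strict convexity I would argue at $t=0$ directly rather than through monotonicity, since a non-increasing function can still have vanishing derivative. Write $\derd{}{t}\big|_{t=0} E^\omega(t) = \sum_k \omega_k\, \drskp{\derd U u(u_k)}{(Gu)_k} = \skp{\derd U u}{Gu}_{T,\omega}$. Using the generator identity $\sum_l G_{kl} = 0$, one has $(Gu)_k = \sum_l G_{kl}(u_l - u_k)$, so $\skp{\derd U u}{Gu}_{T,\omega} = \sum_{k}\sum_{l} \omega_k G_{kl}\, \drskp{\derd U u(u_k)}{u_l - u_k}$; the diagonal terms vanish and the remaining coefficients $\omega_k G_{kl}\ge 0$ ($k\ne l$). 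Symmetrizing the double sum over the ordered pair $(k,l)$ is awkward because $\omega_k G_{kl}$ need not equal $\omega_l G_{lk}$, so instead I would bound each off-diagonal term using convexity of $U$: $\drskp{\derd U u(u_k)}{u_l - u_k} \le U(u_l) - U(u_k)$, recovering the Jensen bound, and note that equality forces $u_l = u_k$ whenever $\omega_k G_{kl} > 0$ and $U$ is strictly convex. Since the null space of $G$ is exactly the constants, the off-diagonal support of $G$ must connect all indices (the graph with an edge $k\!-\!l$ whenever $G_{kl}\neq 0$ is connected — otherwise an indicator of a component would lie in $\ker G$), so $u$ constant on that graph forces $u$ globally constant, contradicting the hypothesis. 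Hence at least one term is strict and $\skp{\derd U u}{Gu}_{T,\omega} < 0$.

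The main obstacle is the strict-inequality case: establishing that the off-diagonal sparsity graph of $G$ is connected given only that $\ker G = \operatorname{span}\{\mathbf 1\}$, and carefully tracking the equality case in Jensen's inequality to conclude that a single strict term survives. Everything else — conservation, the averaging property, the Jensen estimate, and passing from positive $t$ to the derivative at $0$ — is a direct reuse of the universally dissipative filter theorem together with the semigroup structure of $\e^{tG}$.
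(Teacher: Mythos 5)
Your core argument is correct but takes a genuinely different route from the paper. The paper works with the forward--Euler filter $\Upsilon=\Id+\Delta t\,G$: for $\Delta t$ small this is a positive conservative filter (lemma \ref{lem:TiFilter}), discrete Jensen gives $E_T(u+\Delta t\,Gu)<E_T(u)$, and convexity of $\lambda\mapsto E_T(u+\lambda\Delta t\,Gu)$ plus differentiation at $\lambda=0$ transfers this finite decrease to the sign of $\skp{\derd U u}{Gu}_{T,\omega}$. You instead compute directly at the generator level: writing $(Gu)_k=\sum_{l\neq k}G_{kl}(u_l-u_k)$ via the zero row sums, bounding each term by the gradient inequality $\drskp{\derd U u(u_k)}{u_l-u_k}\leq U(u_l)-U(u_k)$ with nonnegative weights $\omega_k G_{kl}$, and observing that the resulting majorant telescopes to zero by conservativity. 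This is more elementary (no auxiliary time step, no limit) and, importantly, it makes explicit something the paper glosses over: the strict inequality requires that the off-diagonal sparsity graph of $G$ be connected, which you correctly derive from $\ker G=\operatorname{span}\{\One\}$ (an indicator of a component would otherwise be a nonconstant kernel element). The paper's invocation of ``Jensen's inequality in the strict sense'' silently needs the same connectivity, so your version is actually the more complete proof of the strict case.

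One genuine flaw, though a harmless one: your first paragraph's monotonicity argument for $E^\omega(t)=\sum_k\omega_k U((\e^{tG}u)_k)$ does not go through as stated. Positivity of $C(t)=\e^{tG}$ is only available for $t$ above some threshold $t^*$, so the semigroup chaining only yields $E^\omega(s+\tau)\leq E^\omega(s)$ for increments $\tau\geq t^*$; it does not compare nearby times, hence does not show $E^\omega$ is non-increasing near $t=0$, and the sign of the right derivative at $0$ cannot be read off from it. (A function can rise on $[0,t^*/2]$ and still satisfy all the long-gap comparisons.) If you wanted to salvage that route you would use $\Id+\Delta t\,G$ for small $\Delta t$ rather than $\e^{tG}$ --- which is exactly the paper's proof. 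As it stands, the entire first paragraph is redundant: your second-paragraph computation already establishes both the ``$\leq$'' case for convex $U$ and the ``$<$'' case for strictly convex $U$, so I would simply delete the semigroup argument.
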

		\begin{proof}
		 The discrete dissipativity 
		 \[
		 \begin{aligned}
			 E_T(u + \Delta t \lambda G u) &= \sum_{k = 1}^{p+1} \omega_k U\left( u_k + \Delta t \lambda \sum_{l=1}^{p+1} G_{kl} u_l \right) \\
			 &< \sum_{k = 1}^{p+1} \omega_k \sum_{l = 1}^{p+1} G_{kl}  U(u_l) = \sum_{l=1}^{p+1} \omega_l U(u_l) = E_{u, T}
			 \end{aligned}
		\]
		follows from the positive conservative filter property of $G$ for $\lambda \Delta t > 0$ small enough as in lemma \ref{lem:TiFilter} in conjunction with the strict convexity and Jensens inequality in the strict sense. Let now $\Delta t$ be fixed and small enough for all  $\lambda \in [0, 1]$, and denote by $\epsilon = E^T(u + \Delta t G u) - E^T_u < 0$ the entropy dissipation for $\lambda = 1$. The convexity of $U$ implies
		\[
			E_T(u + \lambda \Delta t Gu) \leq E_T(u) + \lambda \left(E_T(u + \Delta t Gu)  - E_{T,u}\right) = E_T(u) + \lambda \epsilon 
			\]
			Entering this into the definition of the derivative of $E^T$ with respect to $\lambda$ shows
			\[
			 \derd{E_T(u + \lambda \Delta t Gu)}{\lambda} = \lim_{\lambda \to 0}   \frac{E_T(u+\lambda \Delta t Gu) - E_T(u)}{\lambda} \leq \epsilon,
		\]
			and therefore
			\[
				 \skp{\derd U u}{Gu}_{T, \omega} = \sum_{k = 1}^{p+1} \omega_k \rskp{\derd U u (x_k)} {(G u)_k} =\derd{E_T(u + \lambda \Delta t Gu)}{\lambda} \frac{1}{\Delta t} = \frac{\epsilon}{\Delta t}.
			\]
			If $U$ is not strictly convex the case $\epsilon = 0$ is possible, reducing the result to »$\leq$«. \qed
		\end{proof}
		
		The last step consists of selecting a suitable viscosity distribution $\alpha$, i.e. one that is zero at the endpoints. The standard mollifier
		\[
			\alpha(x) = \begin{cases} \e^{1 - \frac{1}{1-x^2}} & \abs{x} \leq 1 \\ 0 & \abs{x} > 1\end{cases}
		\]
		is smooth and zero at the ends of the reference element. Further, even its derivatives vanish there. It was therefore selected.

	\subsection{Stable computation of the correction size required and timestep restrictions} \label{sec:CorIP}
After we have calculated the entropy dissipation needed and a suitable direction $\upsilon = G u^T$ one would guess we only have to calculate $\lambda$ as in \eqref{eq:lambdadef} via 
\[
\lambda  \geq \frac{\sigma^T - \left(\skp{\derd U u}{\derd u t}_{T, \omega} - \left(F^*_l - F^*_r \right)\right)}{ \skp{\derd U u} {\upsilon}_{T, \omega}}.
\] 
It turns out that this process is significantly more intricate than one would expect as this computation has to be stable with respect to roundoff errors. Further, our estimates on the entropy dissipation can only estimate the entropy dissipation that can take place at the interface between two adjacent cells, but are not able to give an estimate of how this dissipation is split between the two cells. Our method of calculating suitable values of $\lambda^T$ therefore consists of two steps. First,  
\begin{equation} \label{eq:EDcorrection}
\lambda_{\mathrm{ED}}^T = \max\left (0, -\frac{ \skp{\derd U u}{\derd u t}_{T, \omega} - \left(F^*_l - F^*_r \right)}{ \skp{\derd U u} {\upsilon}_{T, \omega}} \right)
\end{equation}
 is calculated to enforce  the per cell entropy dissipativity 
\[
	\skp{\derd U u}{ \derd u t + \lambda_{\mathrm{ED}}^T \upsilon}_{T, \omega} \leq F^*_l - F^*_r.
\]
In a second step a correction to enforce an entropy rate high enough
\begin{equation} \label{eq:ERcorrection}
	\lambda_{\mathrm{ER}}^{\theta} =  \max\left(0, \frac{ \sigma^{\theta} -  \sum_{T \cap \theta \neq \emptyset}\left(\skp{\derd U u}{\derd u t +\lambda_{\mathrm{ED}}^T \upsilon}_{T, \omega} - (F^*_{l, T} - F^*_{r, T})\right)}{ \sum_{\theta \cap T \neq \emptyset}\skp{\derd U u} {\upsilon}_{T, \omega} } \right)
\end{equation}
is determined for all $\theta \in \Theta$. Both corrections are then added together 
\[
	\lambda^T_{\Sigma} =  \lambda^{T}_{\mathrm{ED}}  + \sum_{\theta \cap T \neq \emptyset} \lambda^{\theta}_{\mathrm{ER}}
\]
for all cells $T \in \T$.
Round-off errors tend to influence the calculation out of two reasons. The division by $\skp{\derd U u}{ \upsilon}$ in equation \eqref{eq:EDcorrection} and \eqref{eq:ERcorrection} can approach a division by zero for a solution approaching a constant in the cell, as $\upsilon \to 0$ follows in this case.  Further, we saw in lemma \eqref{lem:LFspeed} that the entropy inequality predictor can vanish with a high order for smooth solutions, and an accurate DG scheme will also have a vanishing entropy error vanishing with a high order. The difference of these two values, i.e. the denominator of the fraction above, will in general not vanish that fast because round-off in the difference becomes important. Therefore $\lambda$ will, for highly resolved smooth solutions, be to big because round-off errors propagate into the calculation. Our solution to this problem is to calculate 
\[
	\lambda_{(\cdot)}^T  = \max\left(\frac{ab}{b^2 + c^2}, 0\right), \text{ instead of } \frac a b 
\]
every time a $\lambda$ is calculated by a division in the procedure above.
Here, $a$ shall be the nominator, $b$ shall be the denominator and $c$ shall be a suitable bound on the round-off error, a constant small with respect to $a, b$ but large with respect to the machine precision. In our implementation this is selected as $c = \sqrt{10^{-16}}$, i.e. the square root of the machine precision for a solution scaled to be of unit magnitude. The addition of $c$ can be seen as the one-dimensional version of Tikhonov regularization \cite{Kress1998Tikhonov}. Clipping the calculation of $\lambda$ at $0$ 
ensures that if $a$ or $b$ become negative from rounding errors $\lambda$ will not become negative, i.e. $\lambda \upsilon$ will not be antidissipative. In a last step,
\[
\lambda^T = \min\left(\lambda_\mathrm{max},\lambda^T_{\Sigma} \right),
\] 
the upper limit $\lambda_{\text{max}}$ is introduced for stability reasons as we want to enforce stability of 
\begin{equation}\label{eq:dispTI}
	\derd {u^T}{t} = \upsilon = \lambda^T G u^T.
\end{equation}
If a Runge-Kutta time integration method can be written as convex combination of forward Euler steps, i.e. is Strong Stability Preserving (SSP) \cite{SSPRK, SO1988, SO1989} and the time-steps satisfy $\Delta t \lambda \leq 1$ during every Euler step, the lemma \ref{lem:TiFilter} allows us to show that the solution is also entropy dissipative in the discrete case. If the time integration method used is just a conditionally stable Runge-Kutta method \cite{Dahlquist1963Special, WHN1978Order} we are interested in limiting the operator norm of $\Delta t \norm{\lambda G} \leq R$ in order to at least avoid a linear instability. The exact size depends on the time integration methods' stability region as we would like to fit the half-circle 
\[
	C = \{z \in \C | \norm{z} \leq R \wedge \im z \leq 0\}
\]
into the stability region of the method.

	\section{Numerical tests} \label{sec:NT}
	Our tests will be carried out for the Euler equations of gas dynamics in conservation form \cite{Harten83b}
\[
u =(\rho, \rho v, E) 
\quad f(\rho, \rho v, E) = \begin{bmatrix} \rho v \\ \rho v^2 + p\\ v(E + p) \end{bmatrix} 
\quad p = (\gamma - 1)\left (E - \frac 1 2 \rho v^2 \right)
\]
in conjunction with the physical entropy \cite{Tadmor2003, Harten83b}
\[
U(\rho, \rho v, E) = - \rho S  \quad F(\rho, \rho v, E) = - \rho v S \quad S = \ln(p \rho^{- \gamma}).
\]
The tests below will focus on the cases $p = 3$ and $p = 7$ as the latter are popular in applications because they amount to $4$ and $8$ nodes, suitable for SIMD processor instructions. Results for values in between are essentially interpolatory to the ones reported for $p = 3$ and $p = 7$ and the source code is available to carry out tests for all values $p > 0$.  
Time integration will be carried out using the SSPRK(4, 3) method for most solutions, while  the convergence analysis for $p = 7$ below will use the Hairer-Wanner DOPRI8 method, to achieve the needed convergence speed of the time integration. In all images below the ansatz functions of all cells are shown without any post-processing.

\begin{table}
	\centering
	\begin{tabular}{c | c | c}
		Property & Tested Solver & Reference \\
		\hline
		Type	&	DDG			&	first order FV \\
		Intercell Flux & local Lax-Friedrichs & Lax-Friedrichs \\
		CFL number &	$0.1/(p^2 +p)$ & $0.5$ \\
		Time Integration & SSPRK(4,3), DoPri8& Forward Euler \\
		Dissipation & $G$ from sec. \ref{sec:dispd}& Built-in \\
		$\lambda_{\mathrm{max}}$ & $\frac 1 {\Delta t}$ & Does not apply \\
		Number of Cells & 13, 25, 50, 100, 200 & $3\cdot 10^4$ \\

	\end{tabular}
	\caption{Used schemes in the numerical tests}
\end{table}
\subsection{Shock tube tests}
First, a series of shock tube tests was done to highlight the effectivity of the entropy correction in shock calculations as this is the primary aim of this publication. The first initial condition \cite[Problem I, Section 4.3.3 and Problem 6a]{Toro2009Riemann, SO1989}) is
\begin{align*}
	\rho_0(x, 0) = \begin{cases}1,  \\ 0.125,  \end{cases} 
	\quad 
	v_0(x, 0) = \begin{cases} 0,  \\ 0,  \end{cases}
	p_0(x, 0) = \begin{cases} 1.0, & x<5,  \\ 0.1, & x \geq 5.\end{cases}
\end{align*}
Our second shock tube is the time-evolution of the following Riemann problem \cite[Problem 6b]{SO1989})
\begin{align*}
	\rho_0(x, 0) = \begin{cases}0.445,  \\ 0.5,  \end{cases} 
	\quad 
	v_0(x, 0) = \begin{cases} 0.698,  \\ 0,  \end{cases}
	p_0(x, 0) = \begin{cases} 3.528, & x < 5.0, \\ 0.571, & x \geq 5.0. \end{cases}
\end{align*}
	\begin{figure}
		\begin{subfigure}{0.49 \textwidth}
			\includegraphics[width=\textwidth]{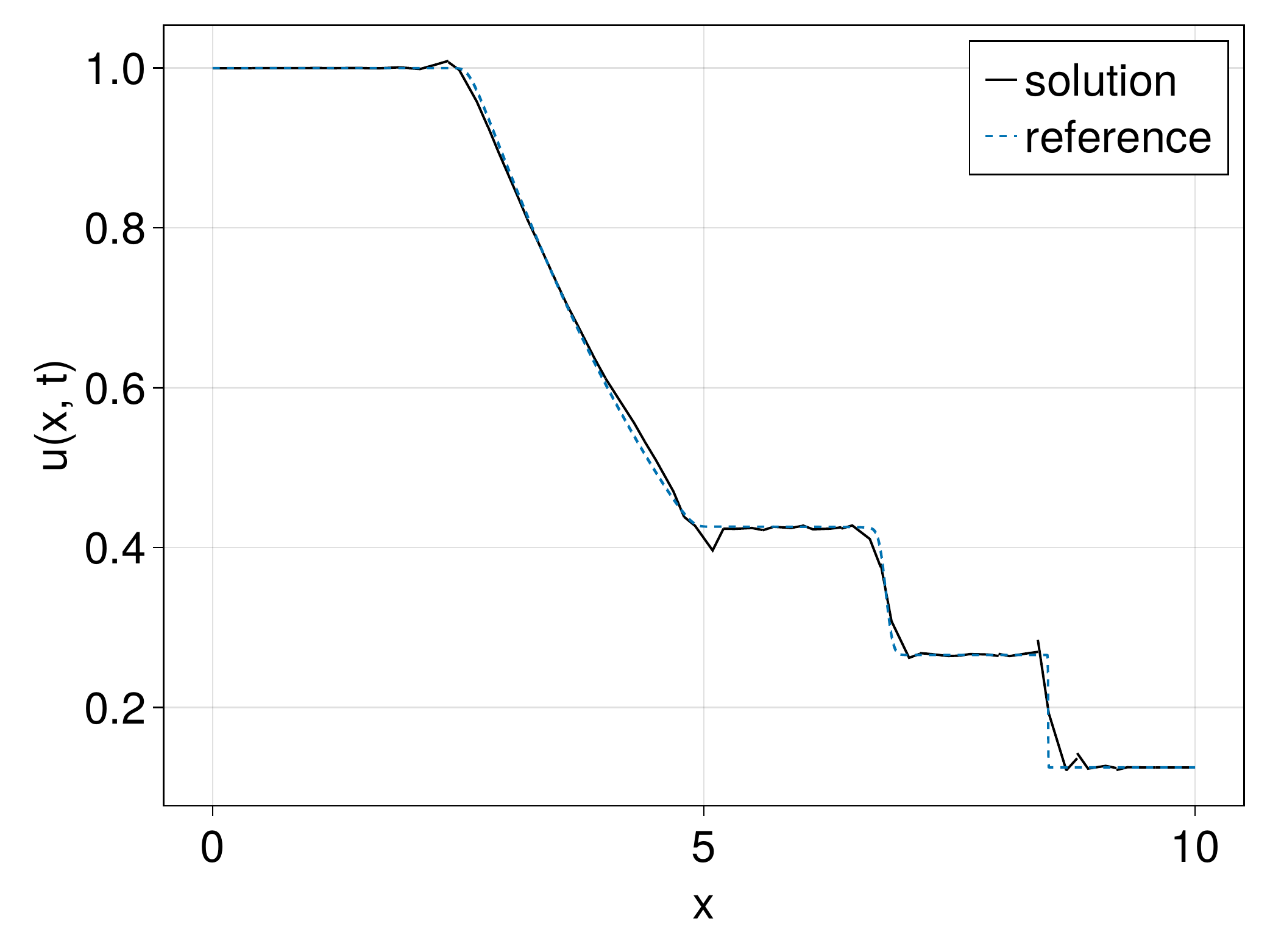}
			\caption{Density, $25$ cells}
		\end{subfigure}
		\begin{subfigure}{0.49 \textwidth}
			\includegraphics[width=\textwidth]{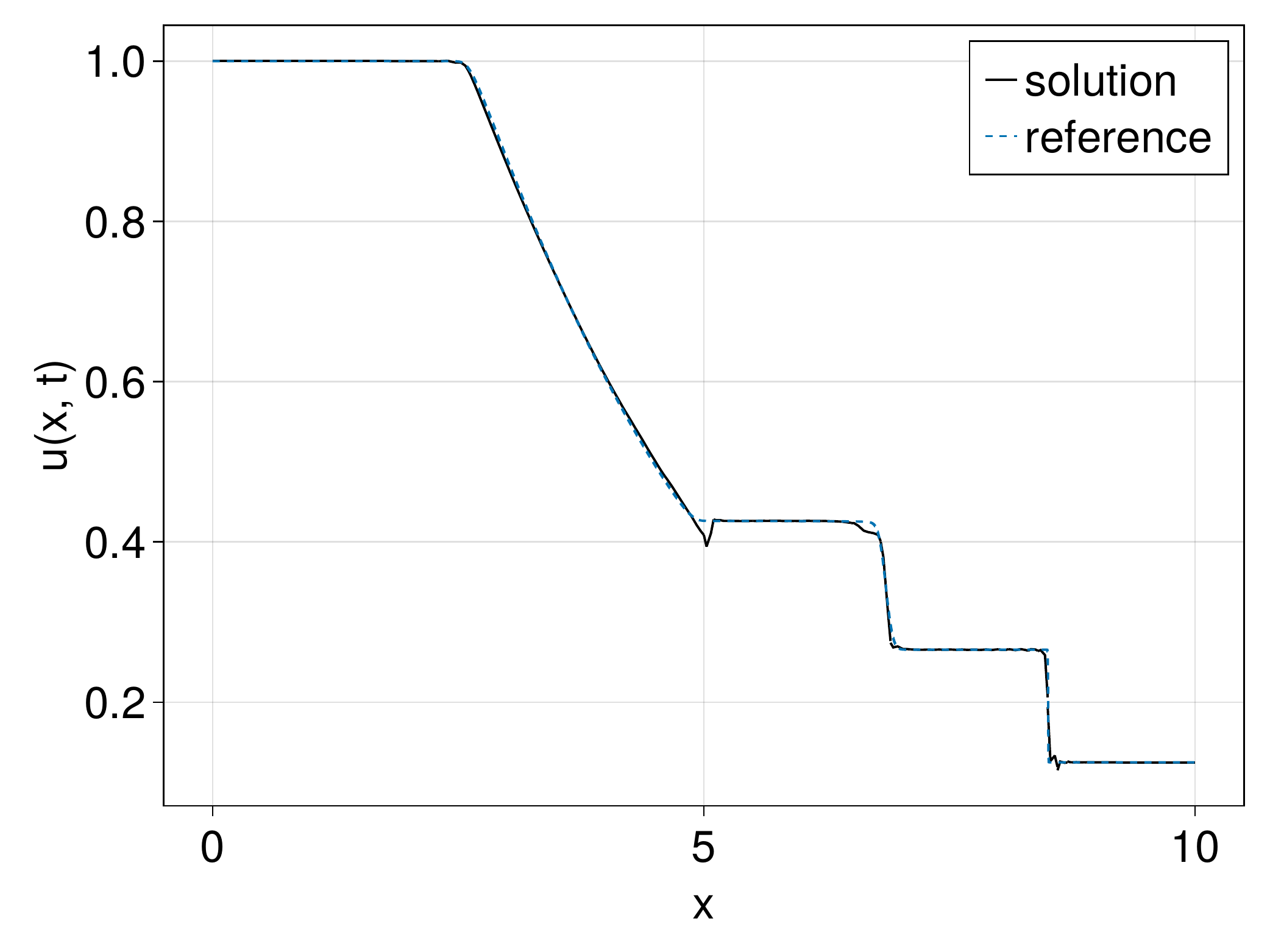}
			 \caption{Density, $100$ cells}
		\end{subfigure}
		\begin{subfigure}{0.49 \textwidth}
			\includegraphics[width=\textwidth]{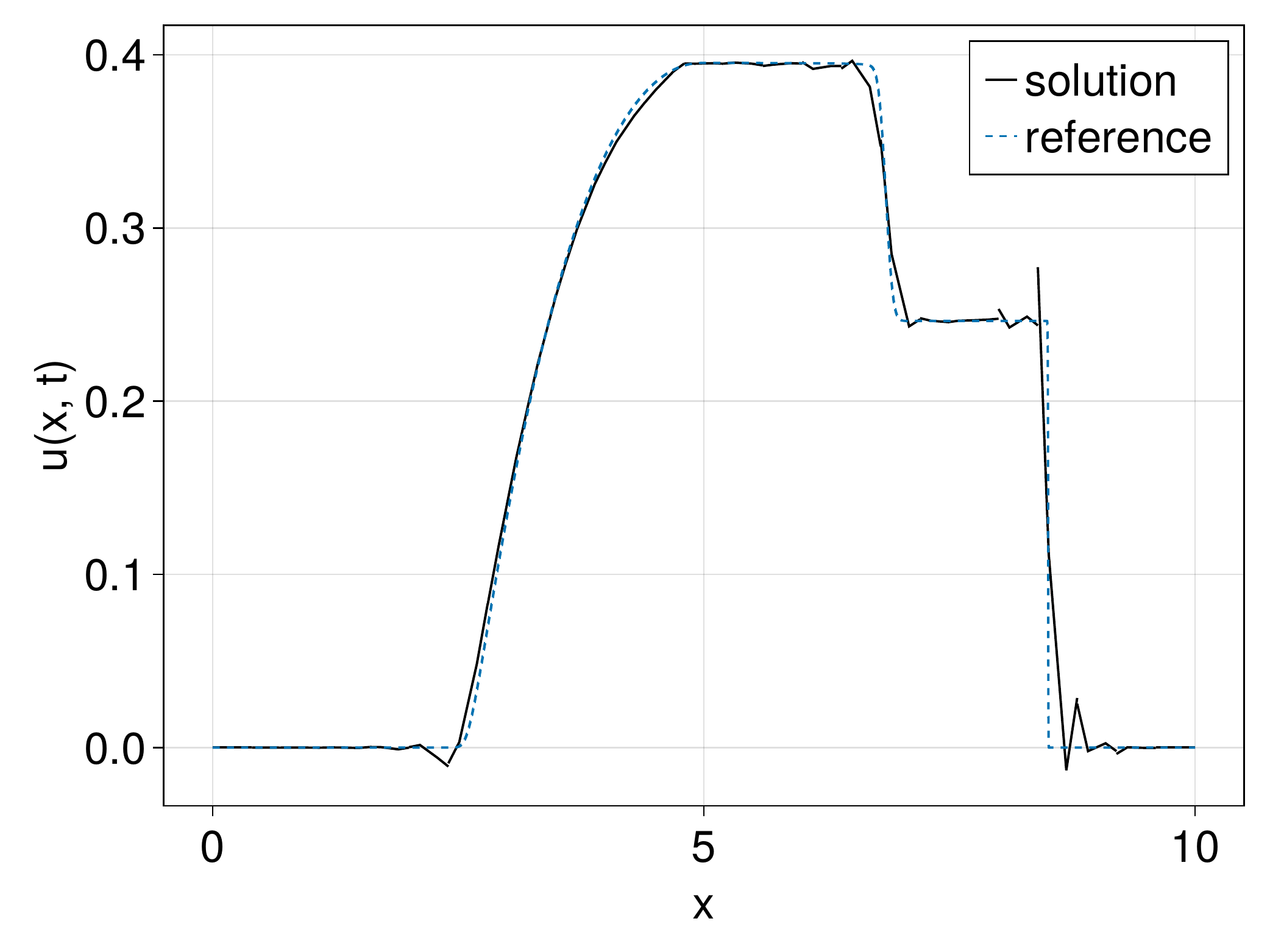}
			 \caption{Moment density, $25$ cells}
		\end{subfigure}
		\begin{subfigure}{0.49 \textwidth}
			\includegraphics[width=\textwidth]{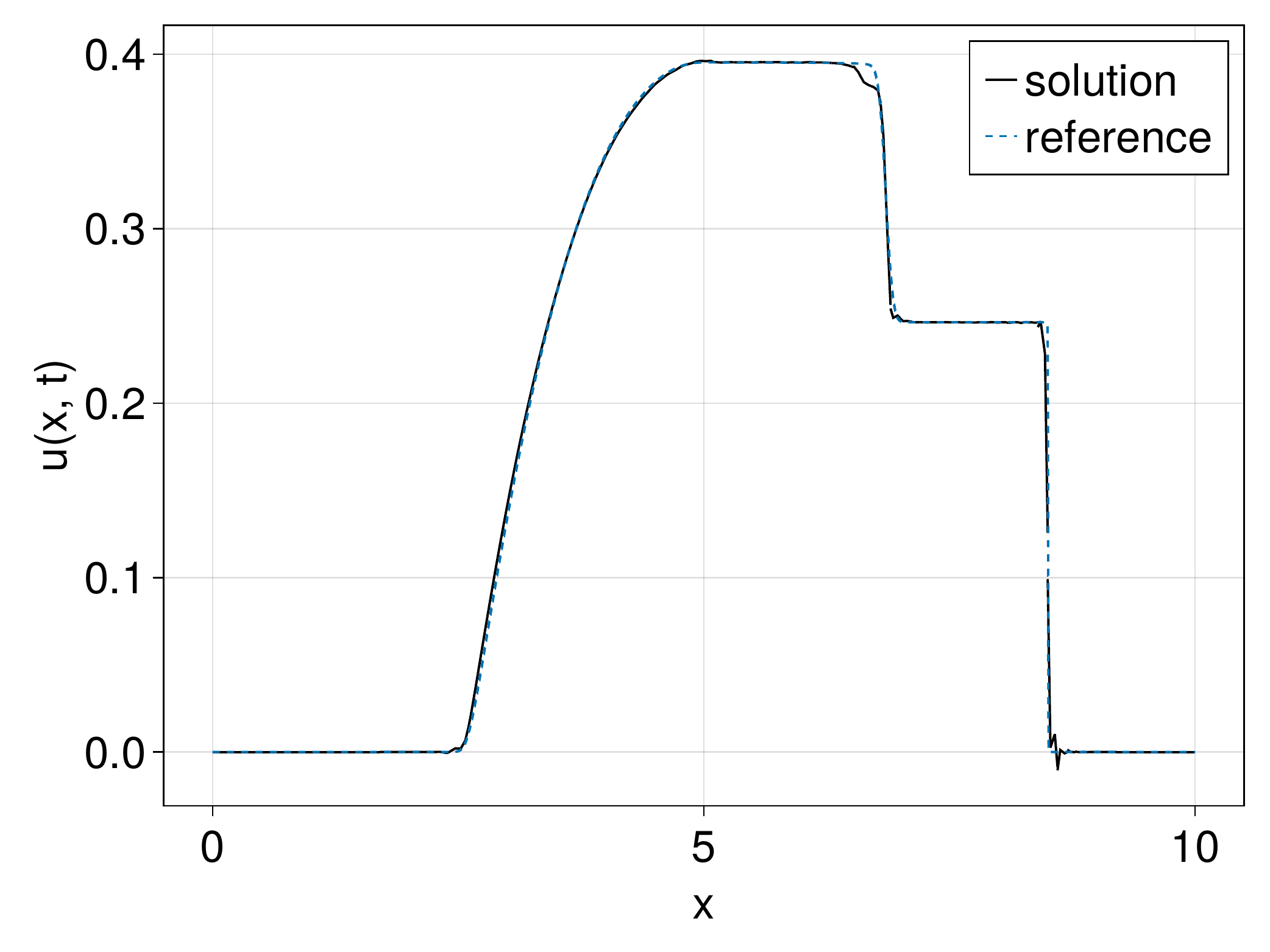}
				\caption{Moment density, $100$ cells}
		\end{subfigure}
		\begin{subfigure}{0.49 \textwidth}
			\includegraphics[width=\textwidth]{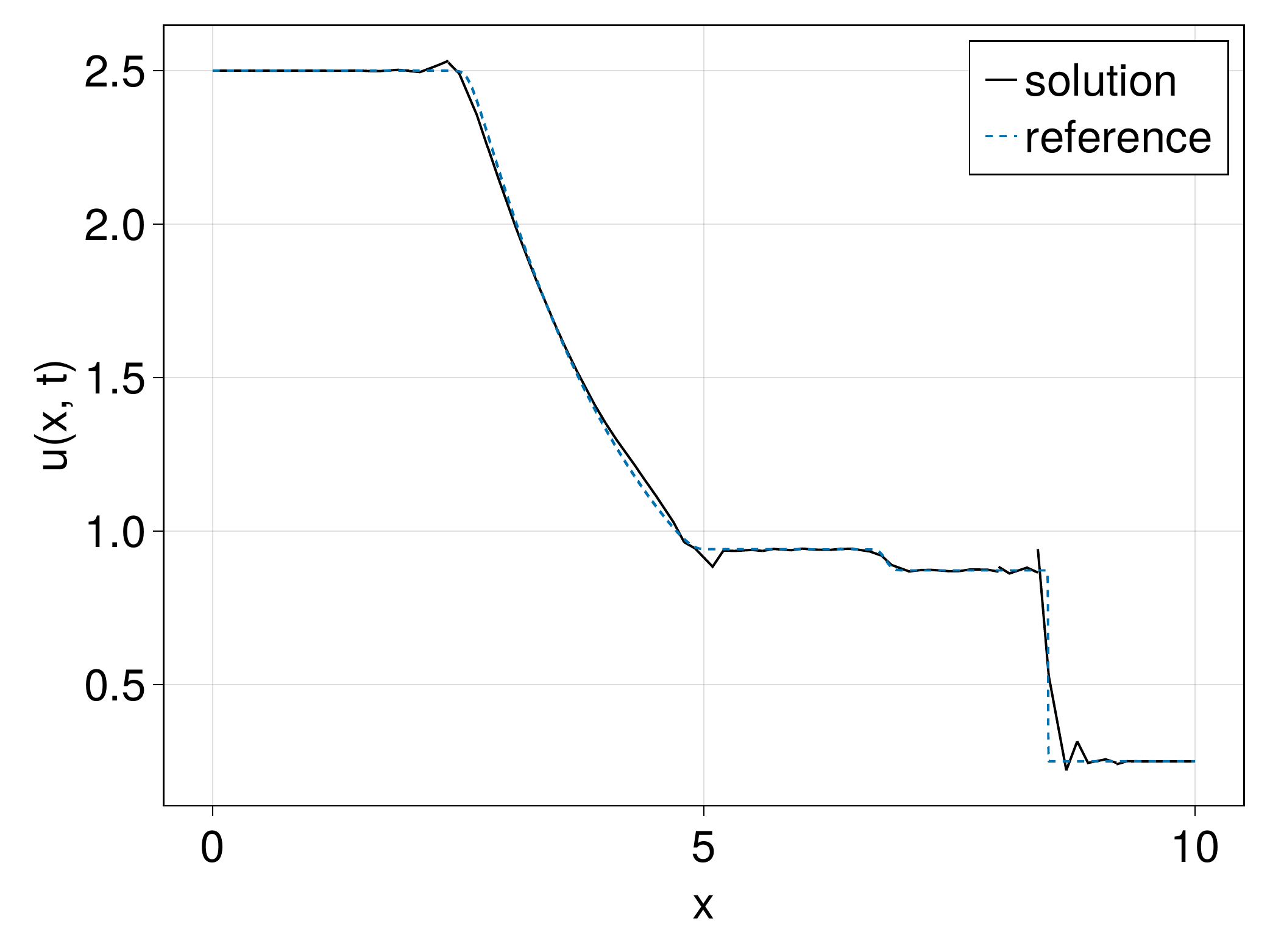}
				\caption{Energy density, $25$ cells}
		\end{subfigure}
		\begin{subfigure}{0.49 \textwidth}
			\includegraphics[width=\textwidth]{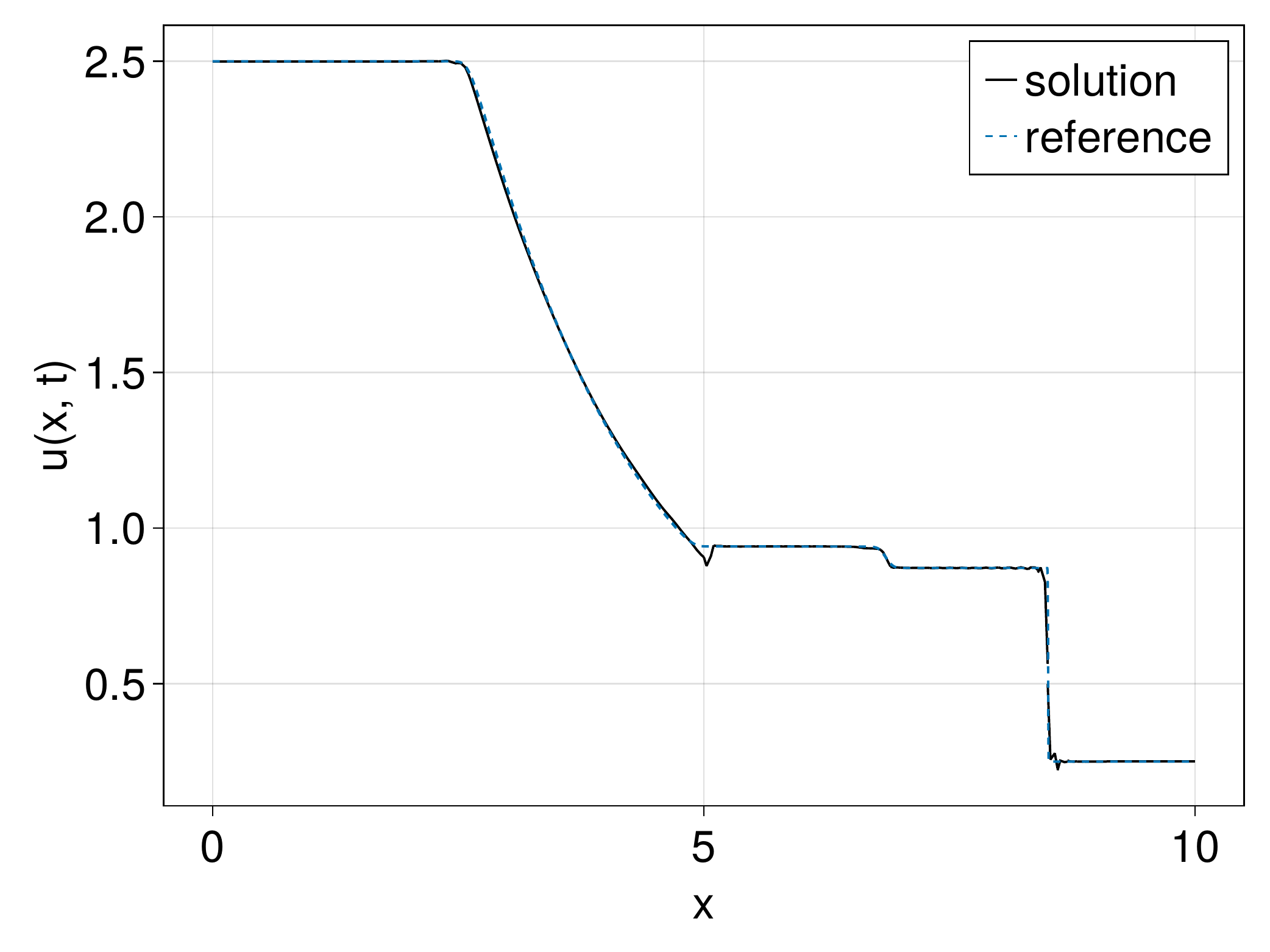}
				\caption{Energy density, $100$ cells}
		\end{subfigure}
		\caption{Shock tube 1 at $t = 1.8$ with $25$ cells corresponding to $100$ degrees of freedom and $100$ cells corresponding to $400$ degrees of freedom (p=3).}
		\label{fig:sod3}
		\end{figure}
		\begin{figure}
		\begin{subfigure}{0.49 \textwidth}
			\includegraphics[width=\textwidth]{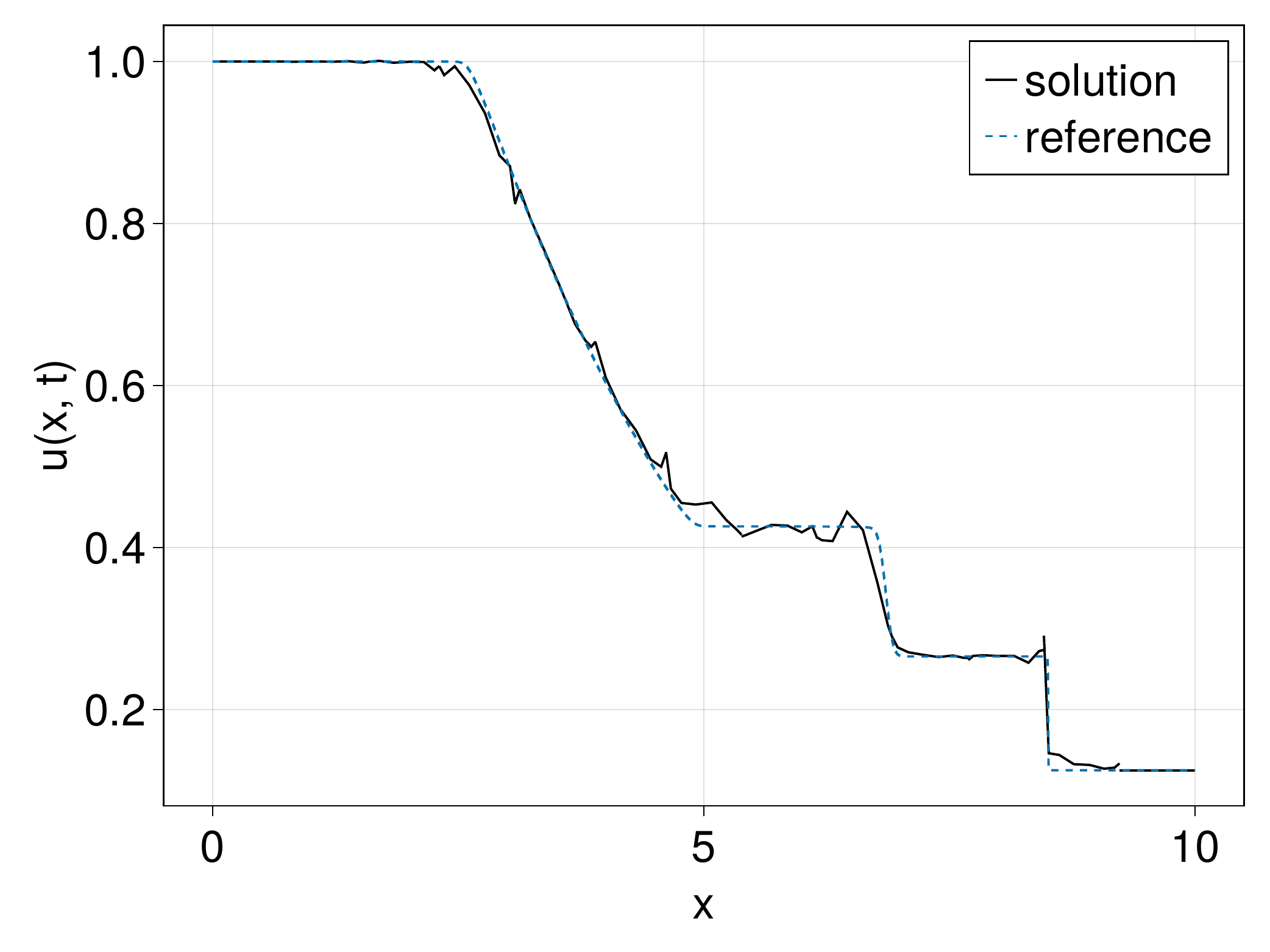}
			\caption{Density, $13$ cells}
		\end{subfigure}
		\begin{subfigure}{0.49 \textwidth}
			\includegraphics[width=\textwidth]{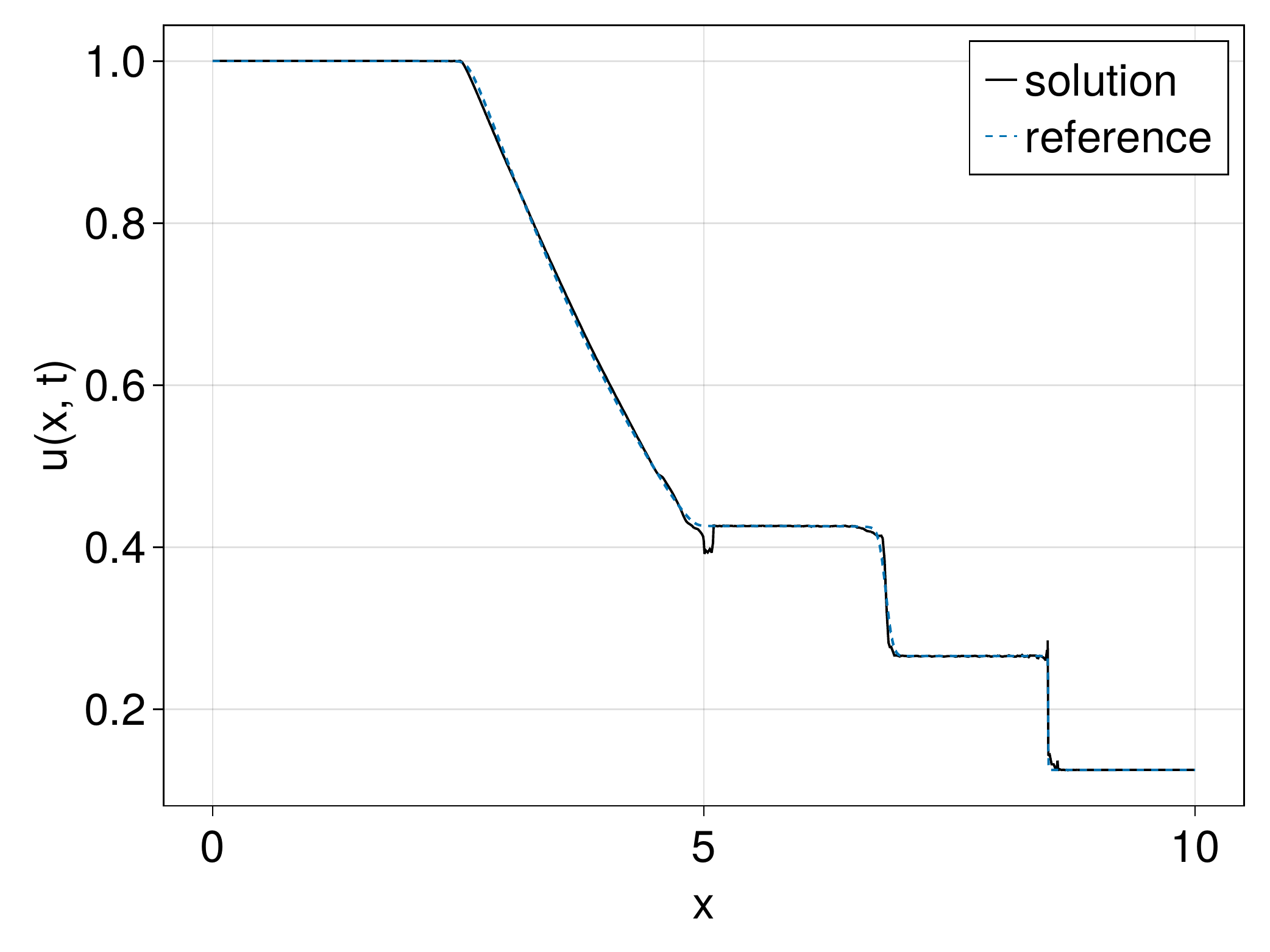}
			\caption{Density, $100$ cells}
		\end{subfigure}
		\begin{subfigure}{0.49 \textwidth}
			\includegraphics[width=\textwidth]{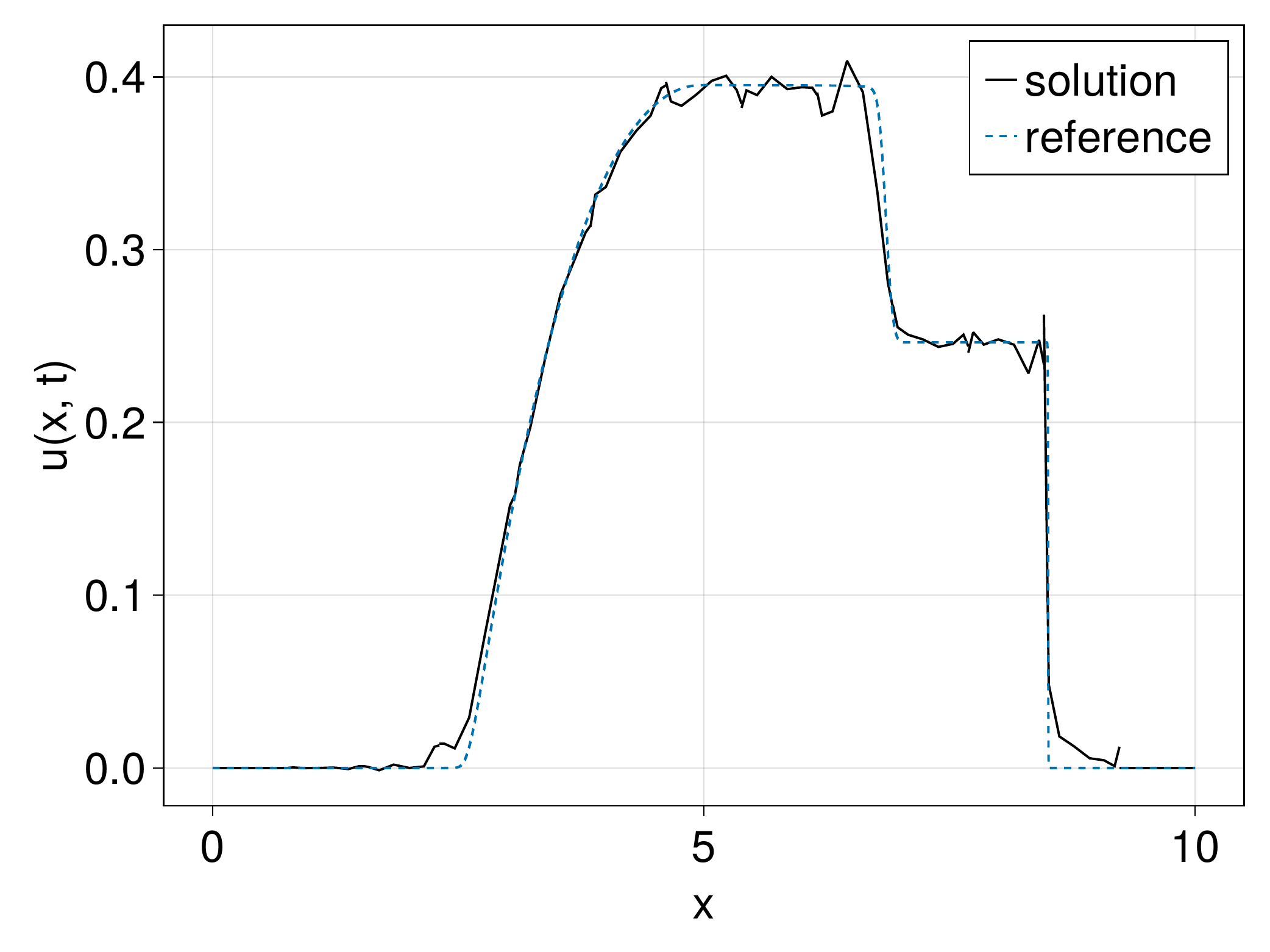}
			\caption{Moment density, $13$ cells}
		\end{subfigure}
		\begin{subfigure}{0.49 \textwidth}
			\includegraphics[width=\textwidth]{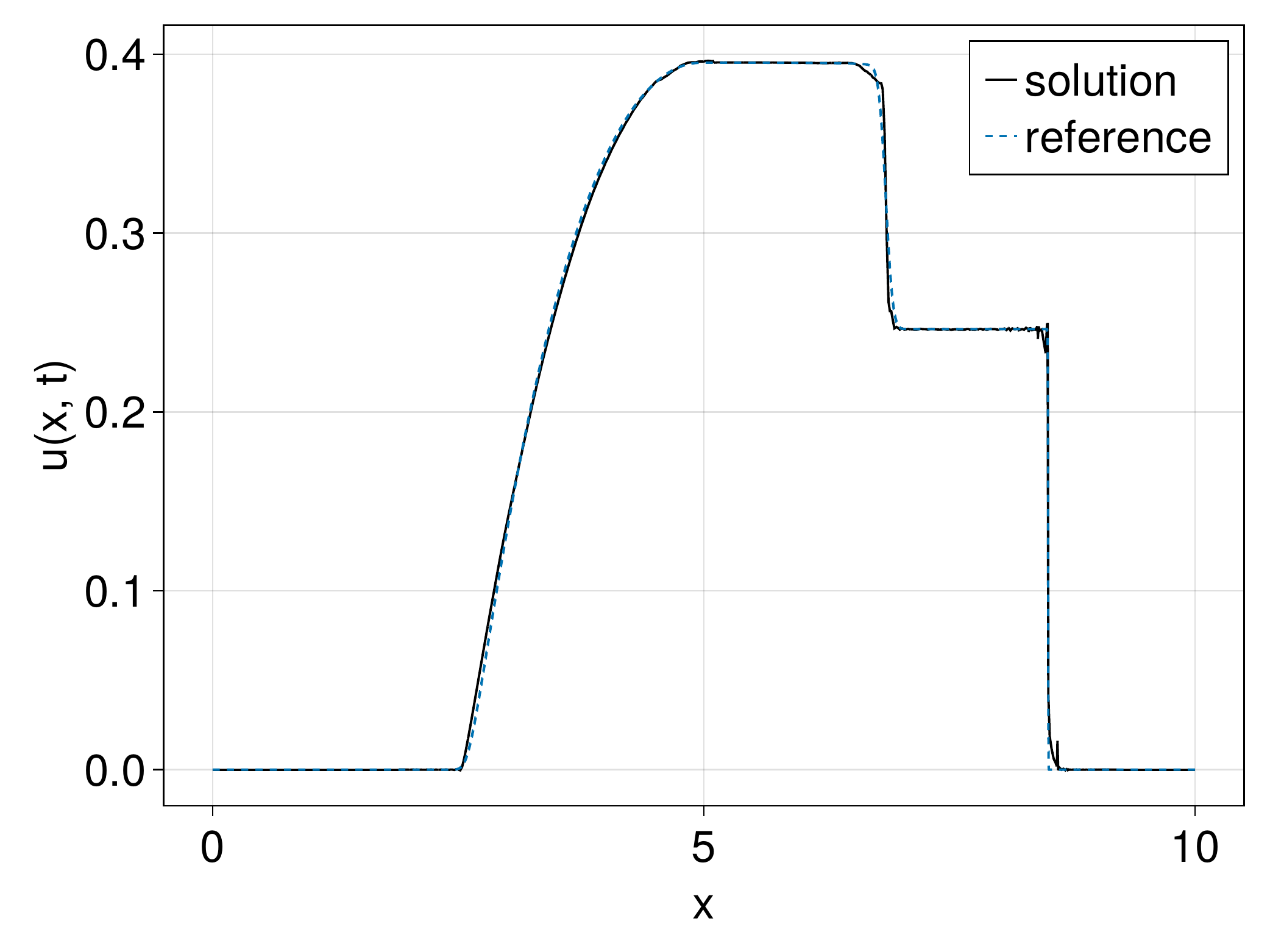}
			\caption{Moment density, $100$ cells}
		\end{subfigure}
		\begin{subfigure}{0.49 \textwidth}
			\includegraphics[width=\textwidth]{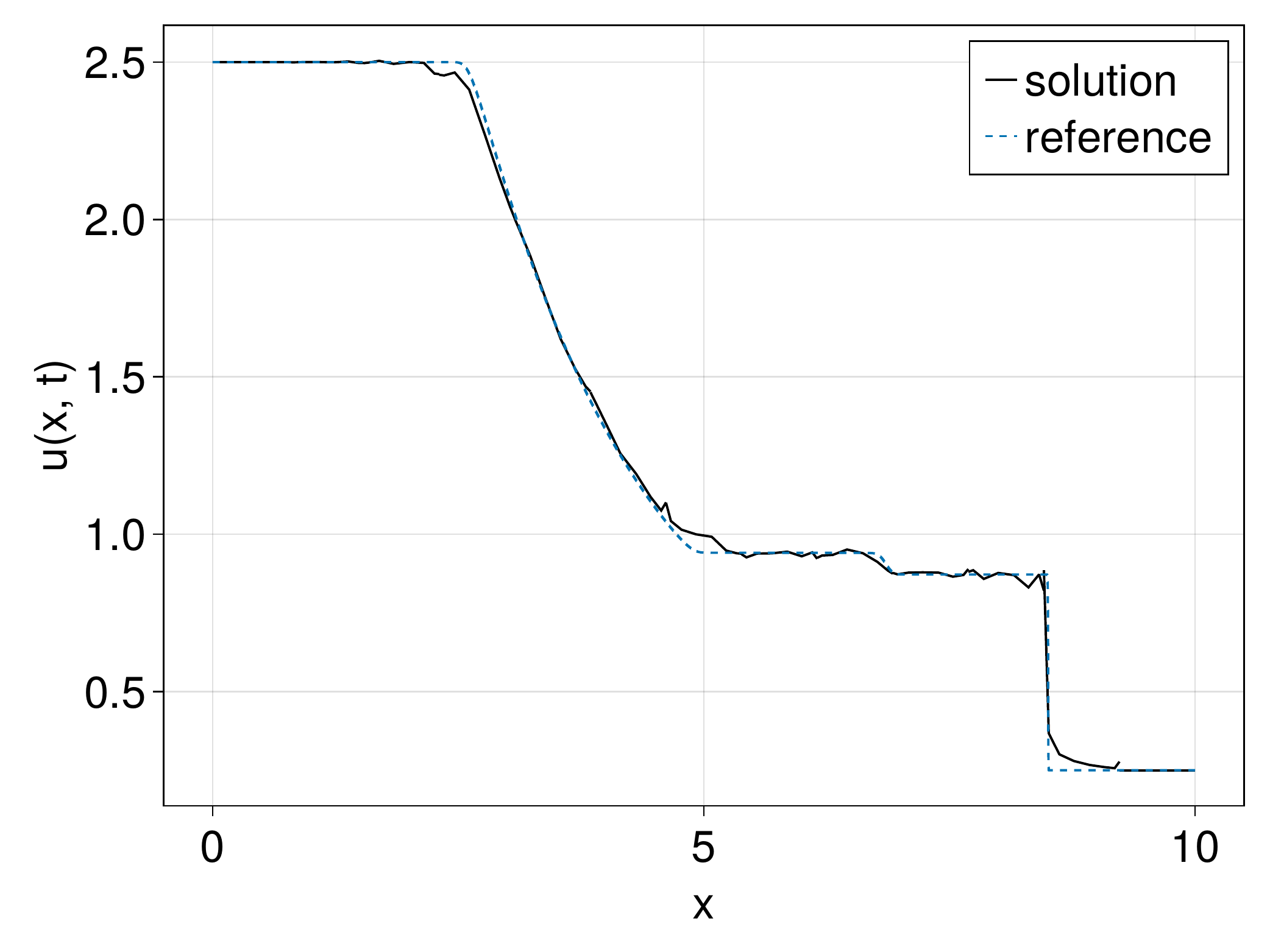}
			\caption{Energy density, $13$ cells}
		\end{subfigure}
		\begin{subfigure}{0.49 \textwidth}
			\includegraphics[width=\textwidth]{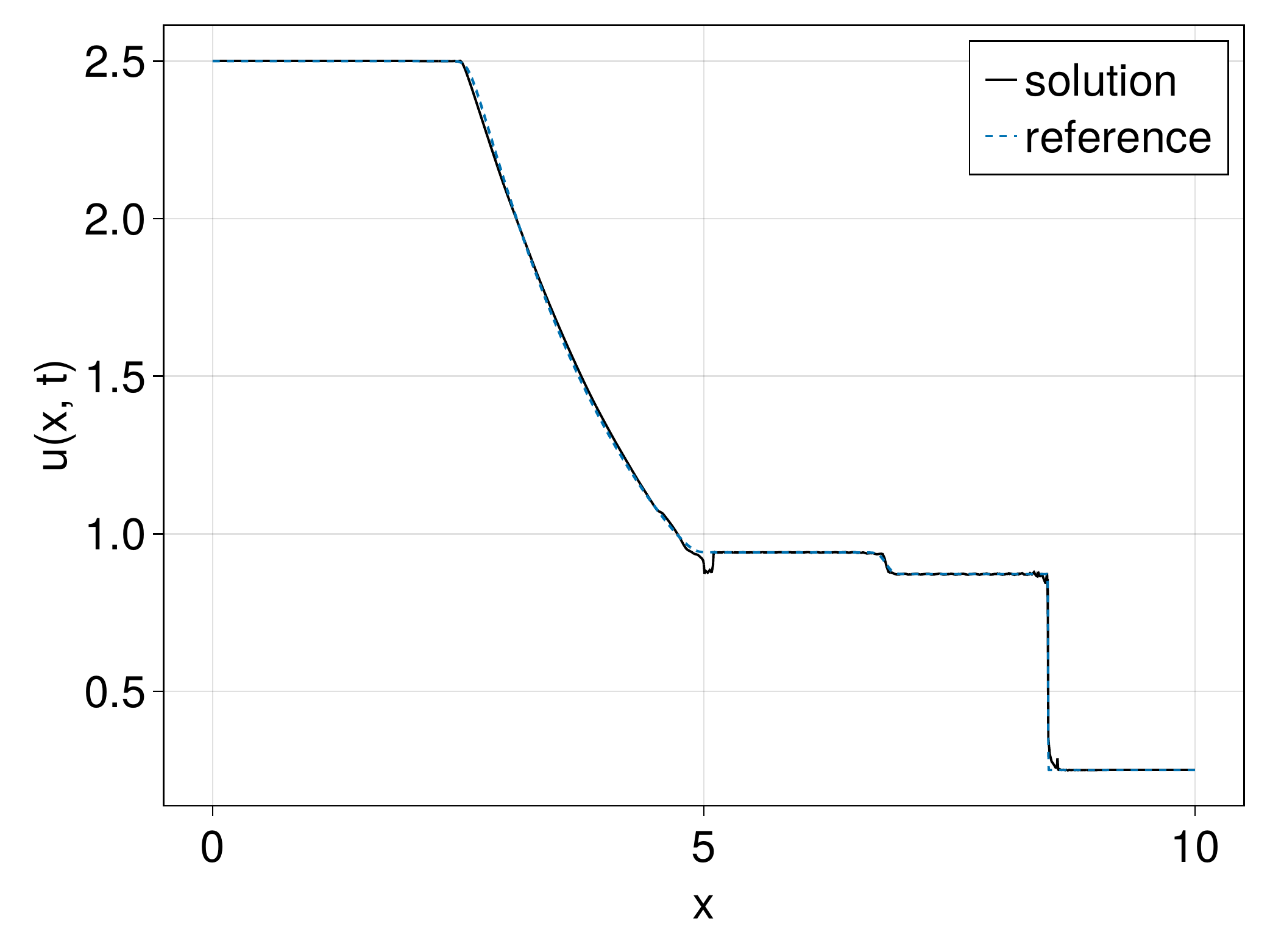}
			\caption{Energy density, $100$ cells}
		\end{subfigure}
		\caption{Shock tube 1 at $t = 1.8$ with $13$ cells corresponding to $104$ degrees of freedom and $100$ cells corresponding to $800$ degrees of freedom (p=7).}
		\label{fig:sod7}
	\end{figure}

	\begin{figure}
		\begin{subfigure}{0.49 \textwidth}
			\includegraphics[width=\textwidth]{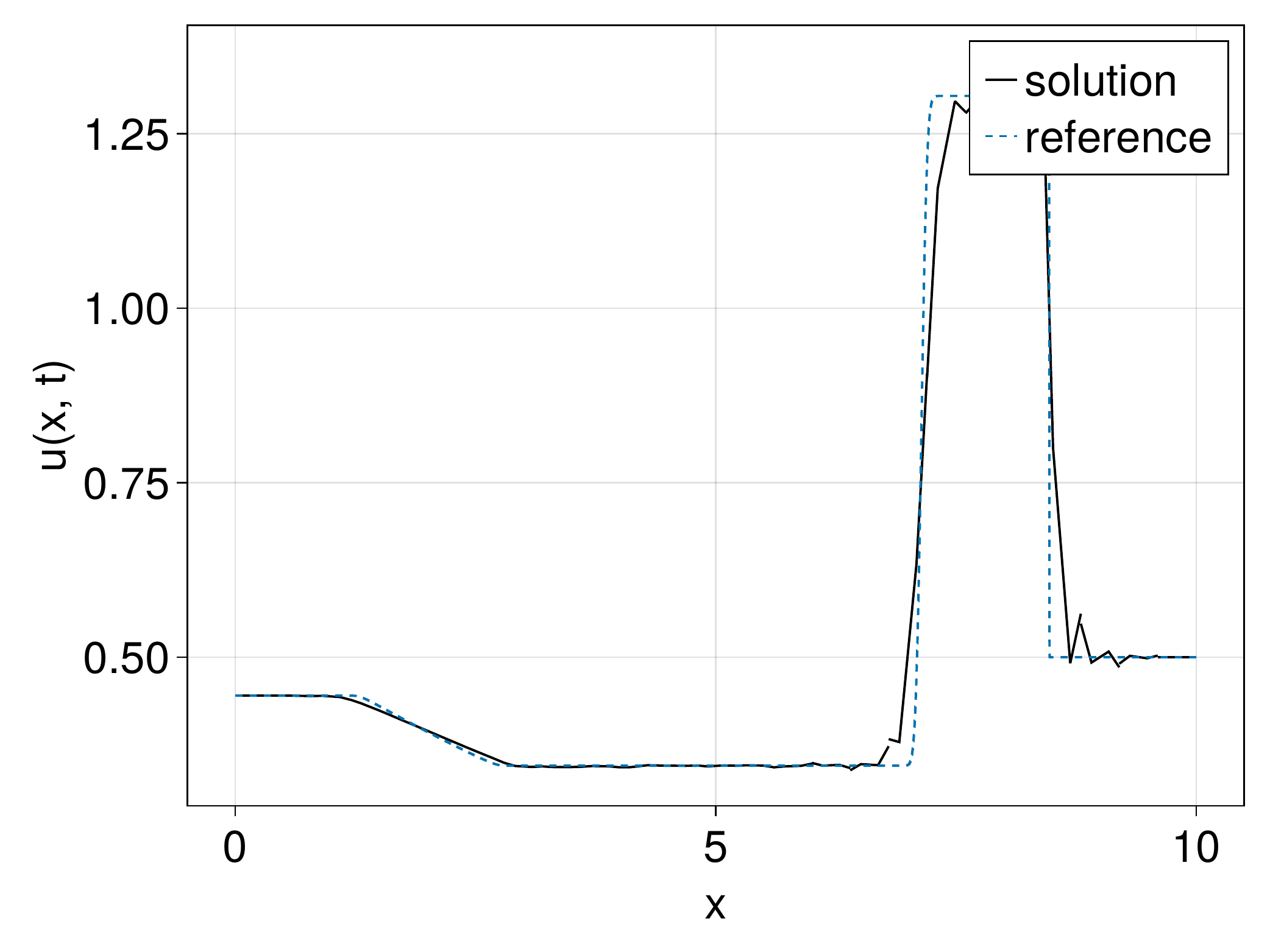}
			\caption{Density, $25$ cells}
		\end{subfigure}
		\begin{subfigure}{0.49 \textwidth}
			\includegraphics[width=\textwidth]{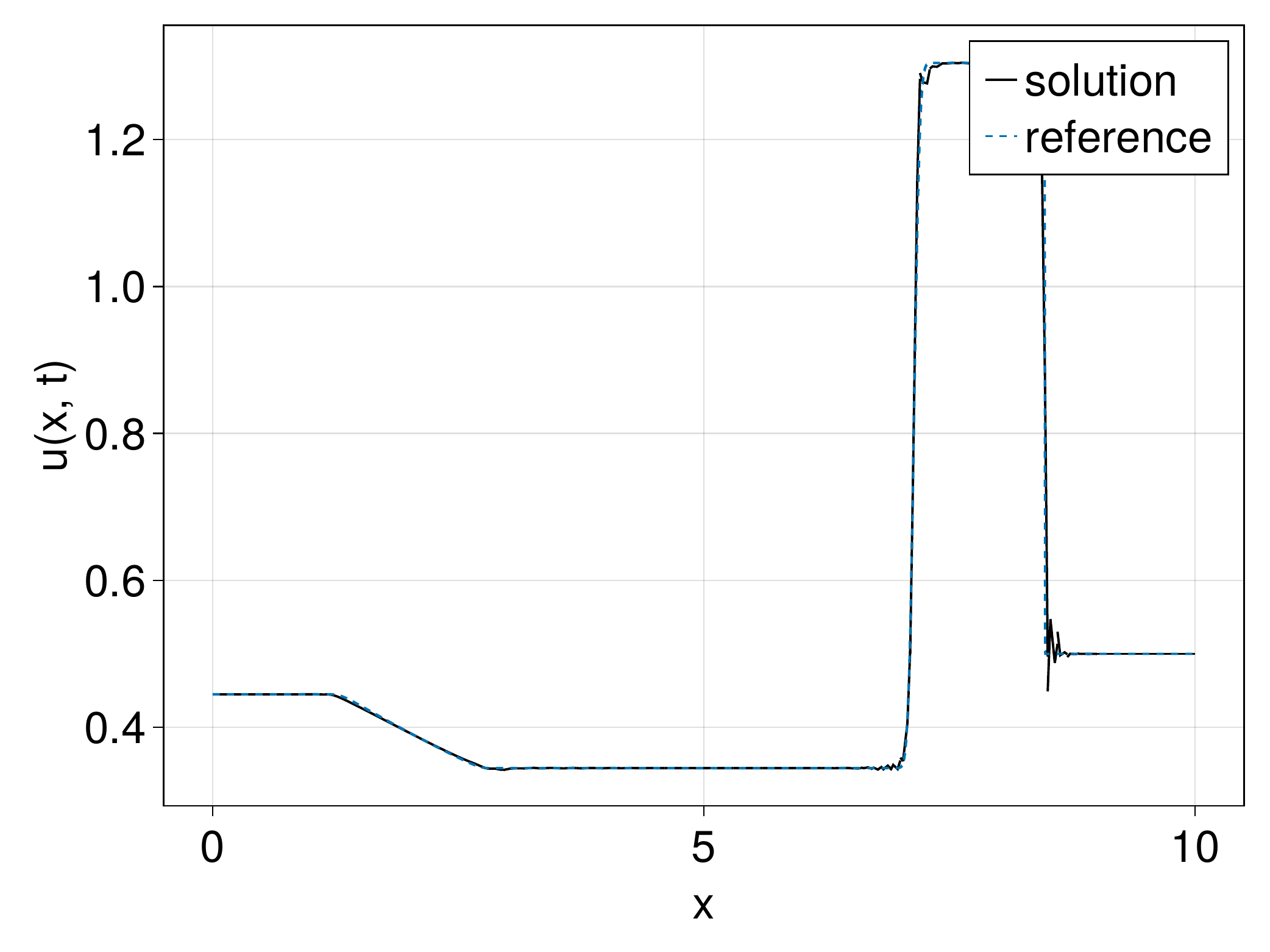}
			\caption{Density, $100$ cells}
		\end{subfigure}
		\begin{subfigure}{0.49 \textwidth}
			\includegraphics[width=\textwidth]{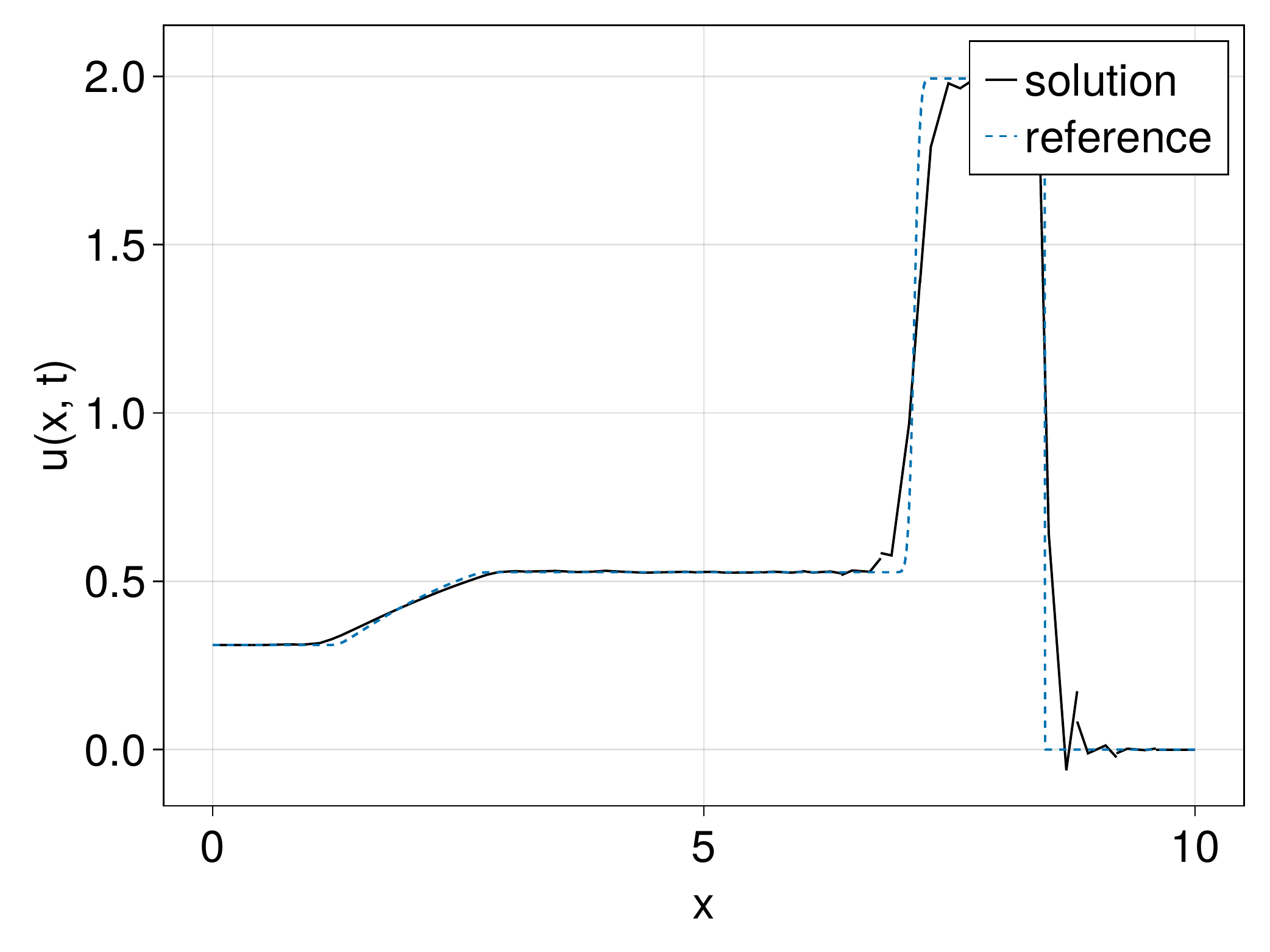}
			\caption{Moment density, $25$ cells}
		\end{subfigure}
		\begin{subfigure}{0.49 \textwidth}
			\includegraphics[width=\textwidth]{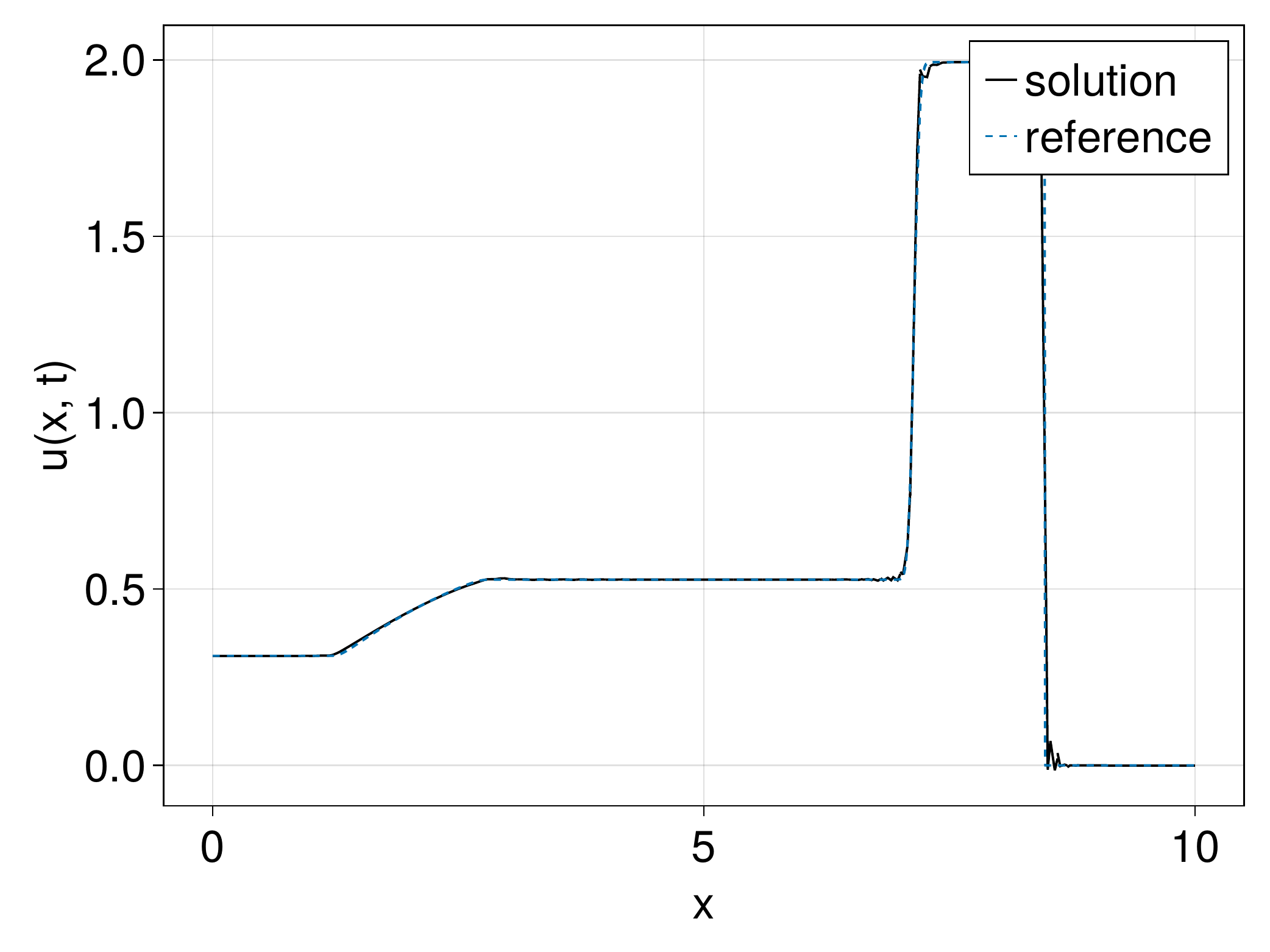}
			\caption{Moment density, $100$ cells}
		\end{subfigure}
		\begin{subfigure}{0.49 \textwidth}
			\includegraphics[width=\textwidth]{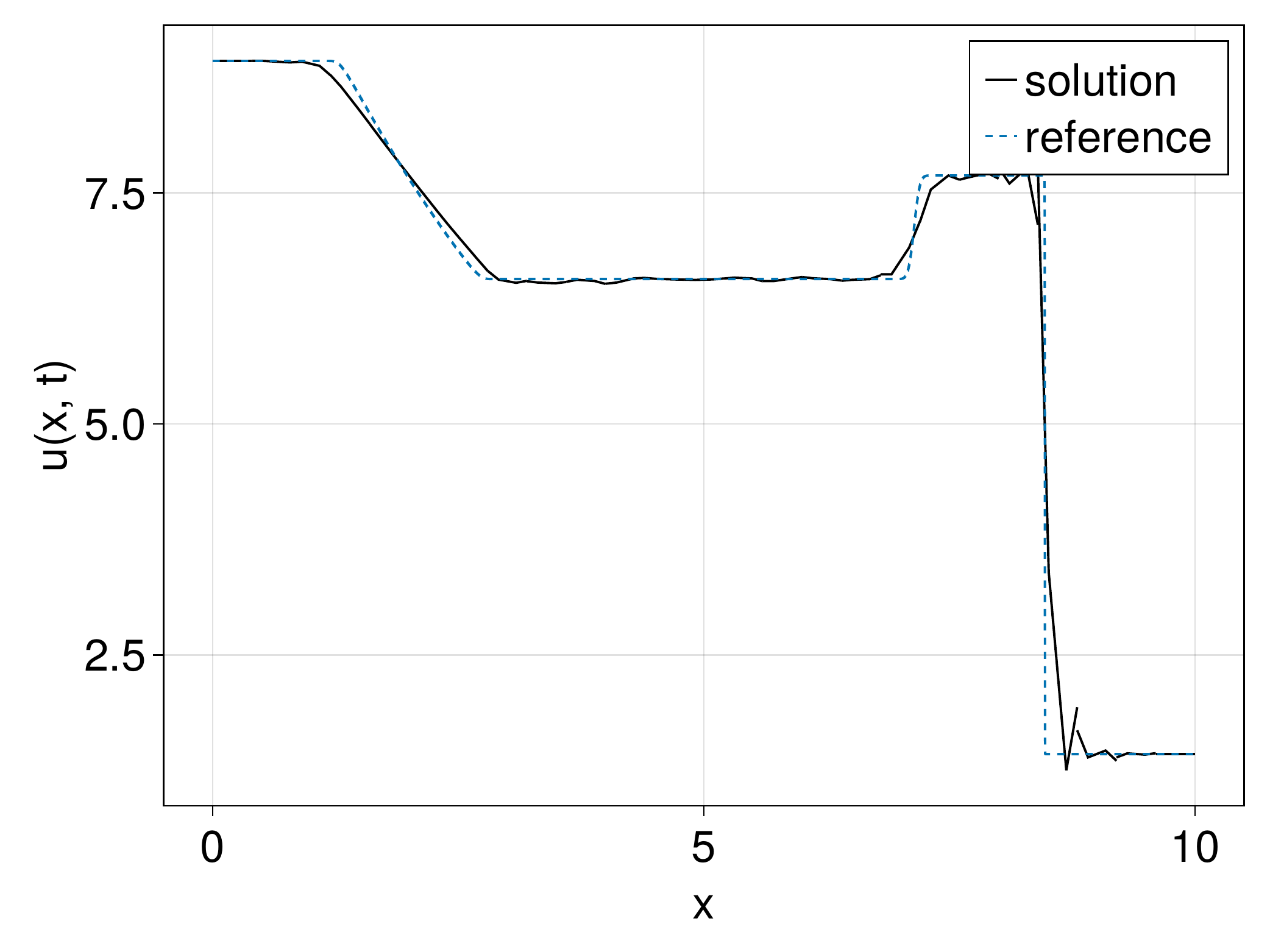}
			\caption{Energy density, $25$ cells}
		\end{subfigure}
		\begin{subfigure}{0.49 \textwidth}
			\includegraphics[width=\textwidth]{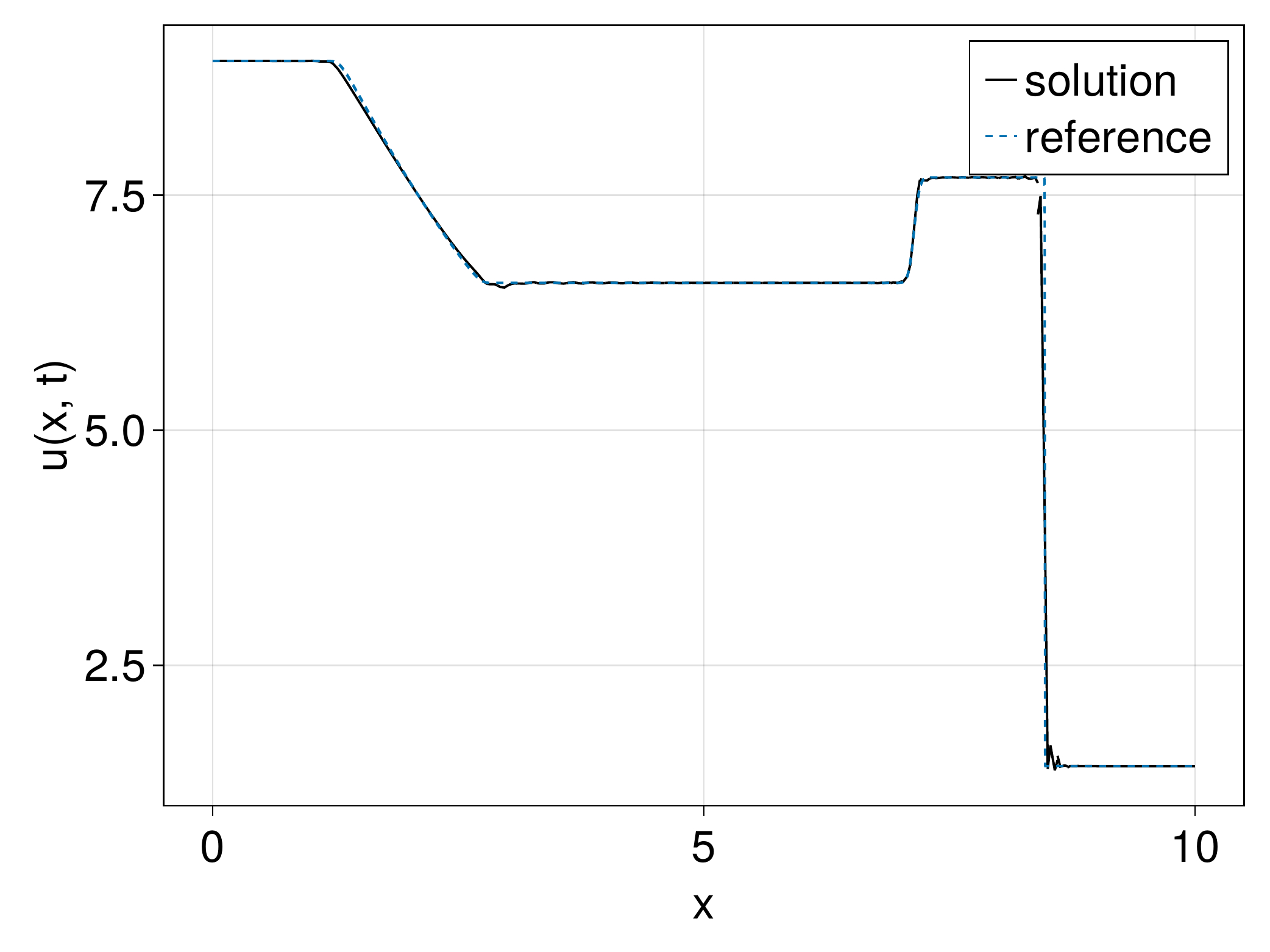}
			\caption{Energy density, $100$ cells}
		\end{subfigure}
		\caption{Shock tube 2 at $t = 1.2$ with $25$ cells corresponding to $100$ degrees of freedom and $100$ cells corresponding to $400$ degrees of freedom (p=3).}
		\label{fig:lax3}
	\end{figure}
	\begin{figure}
	\begin{subfigure}{0.49 \textwidth}
		\includegraphics[width=\textwidth]{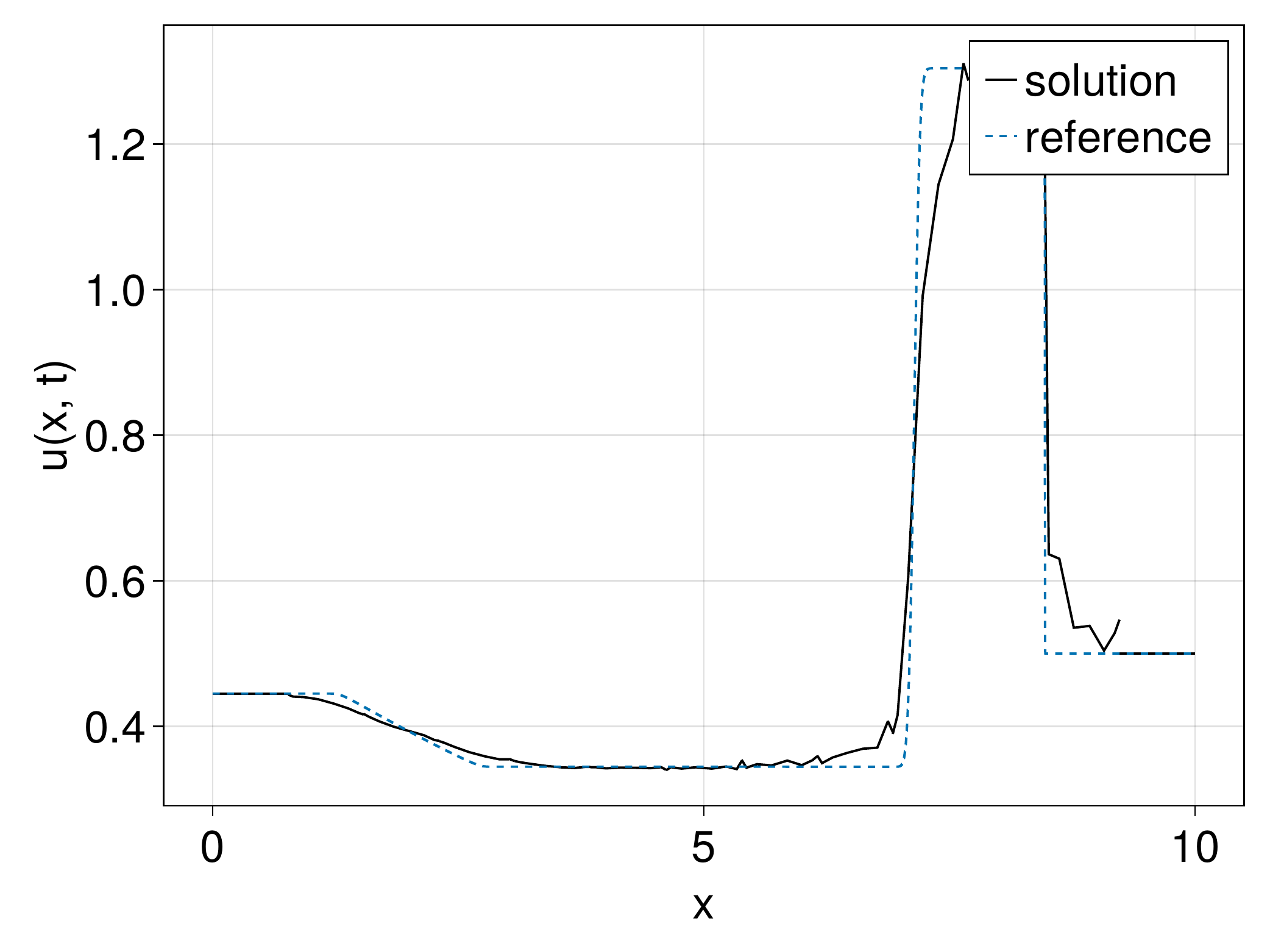}
		\caption{Density, $13$ cells}
	\end{subfigure}
	\begin{subfigure}{0.49 \textwidth}
		\includegraphics[width=\textwidth]{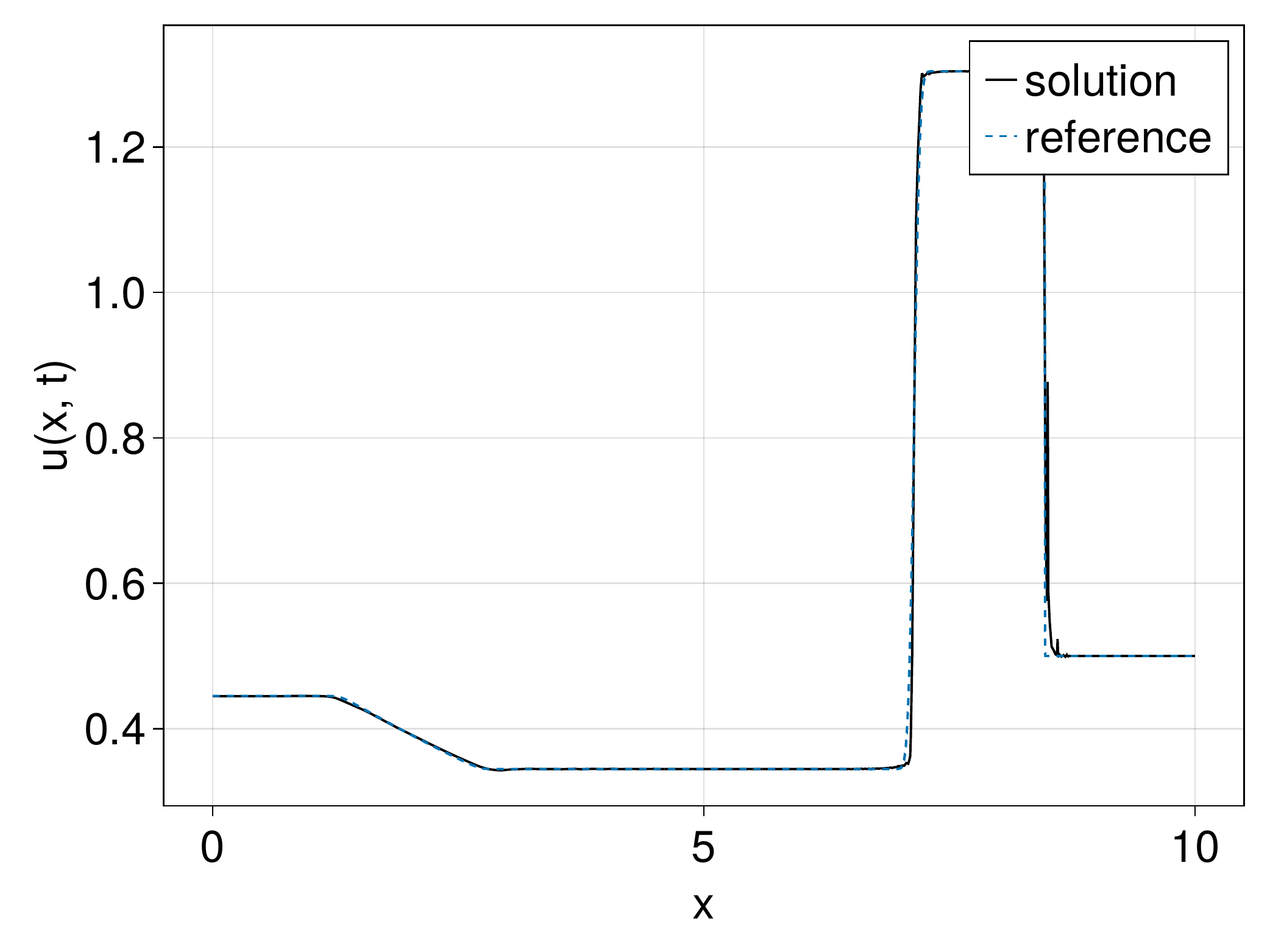}
		\caption{Density, $100$ cells}
	\end{subfigure}
	\begin{subfigure}{0.49 \textwidth}
		\includegraphics[width=\textwidth]{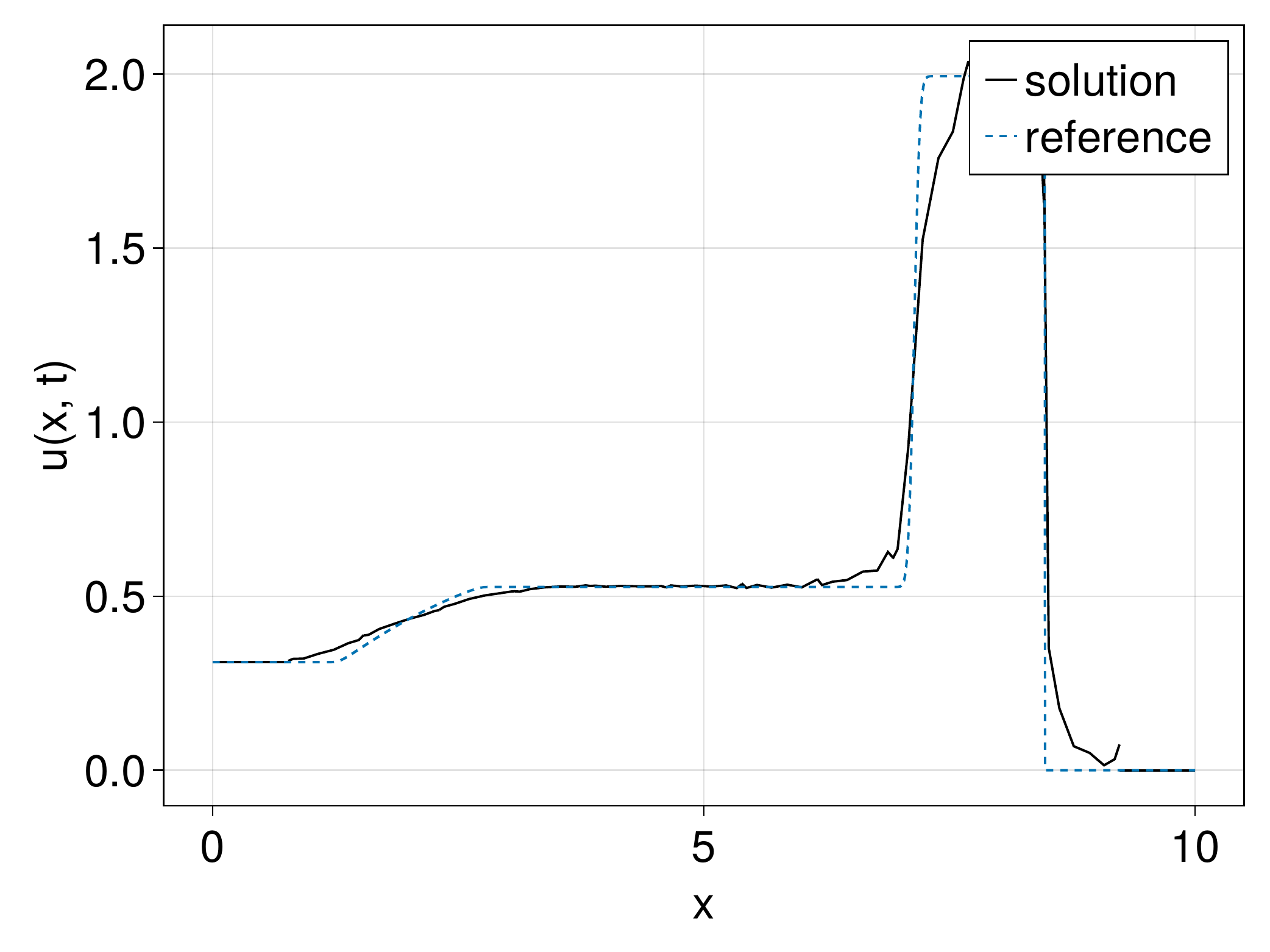}
		\caption{Moment density, $13$ cells}
	\end{subfigure}
	\begin{subfigure}{0.49 \textwidth}
		\includegraphics[width=\textwidth]{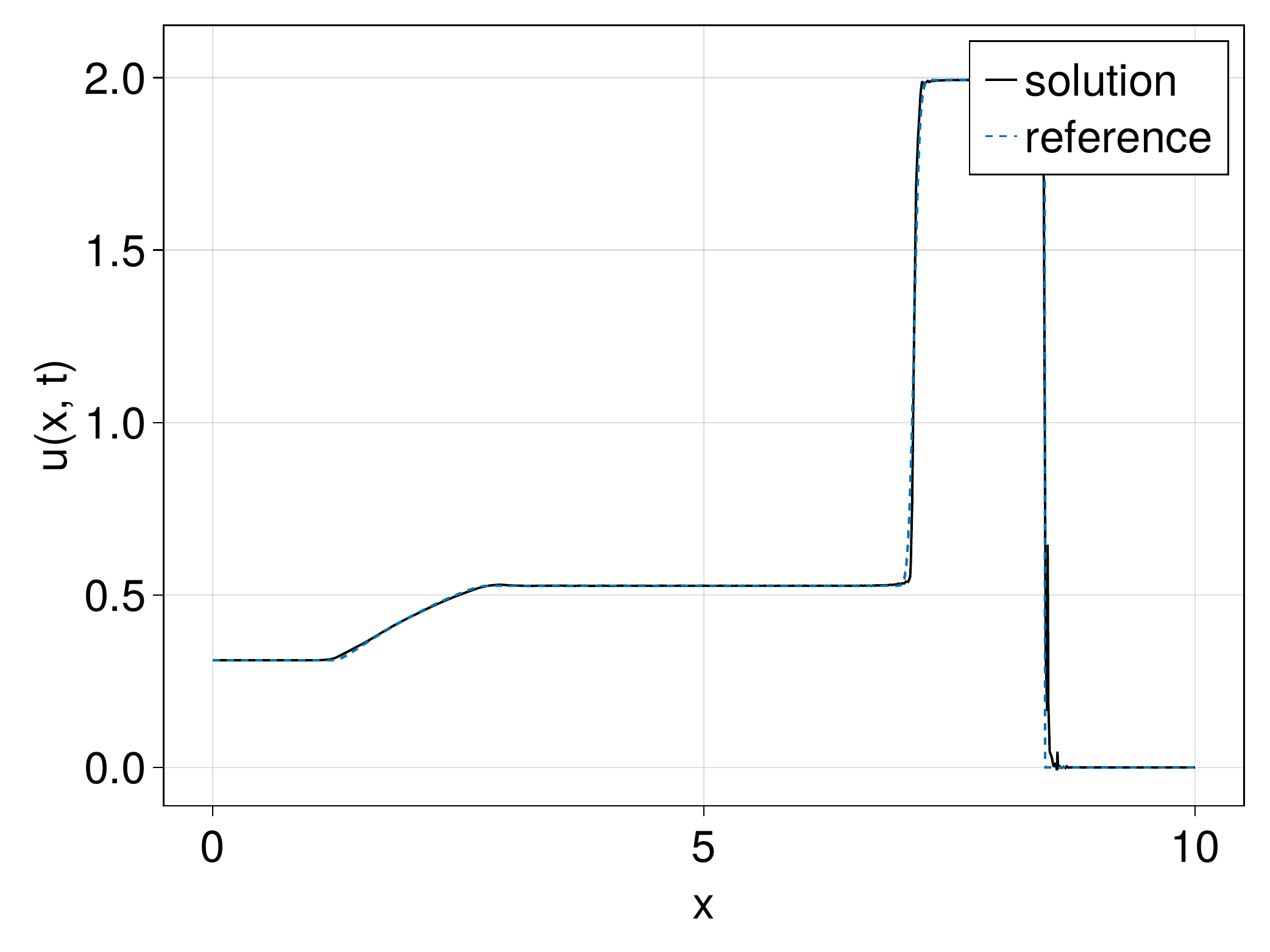}
		\caption{Moment density, $100$ cells}
	\end{subfigure}
	\begin{subfigure}{0.49 \textwidth}
		\includegraphics[width=\textwidth]{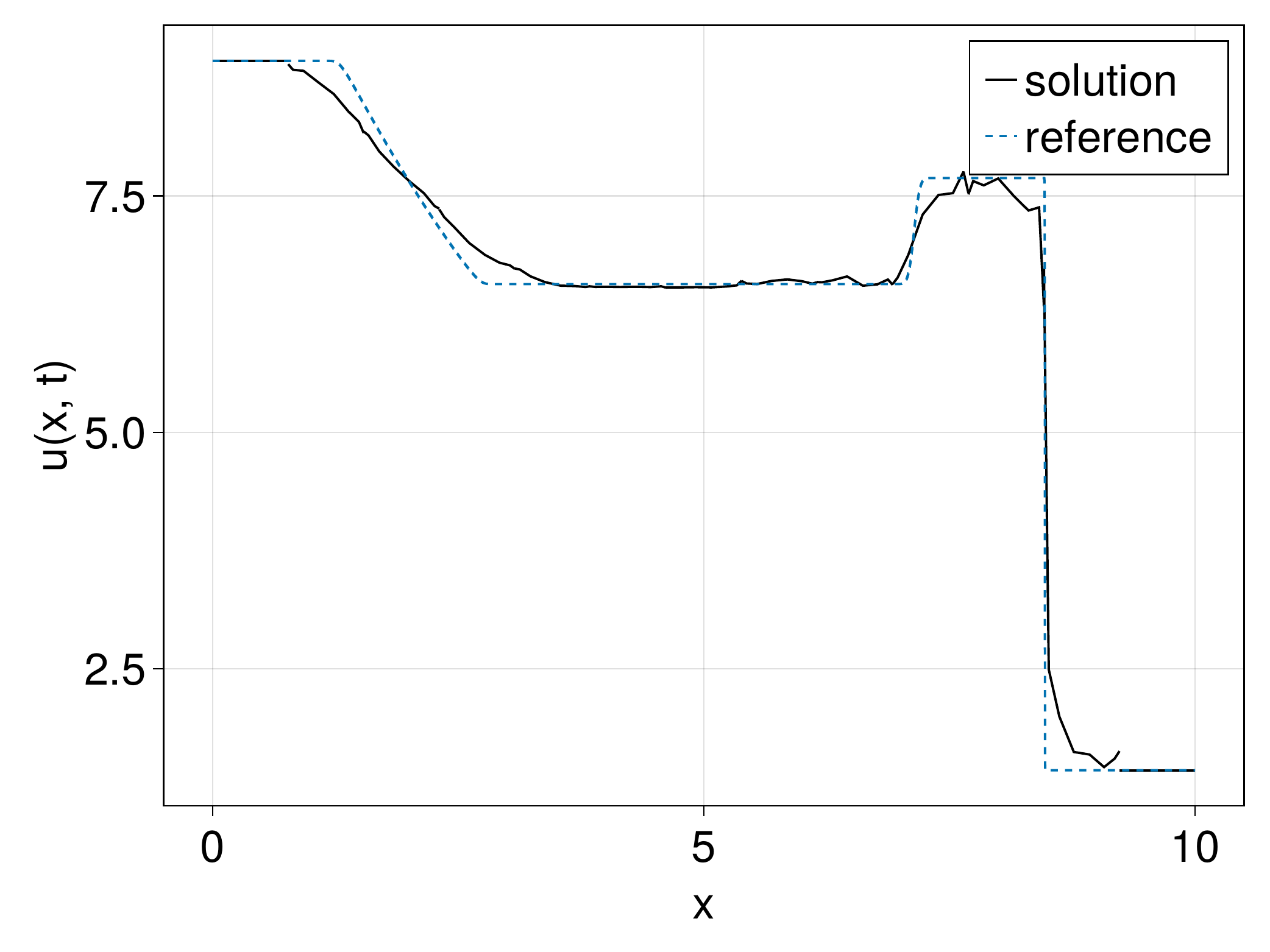}
		\caption{Energy density, $13$ cells}
	\end{subfigure}
	\begin{subfigure}{0.49 \textwidth}
		\includegraphics[width=\textwidth]{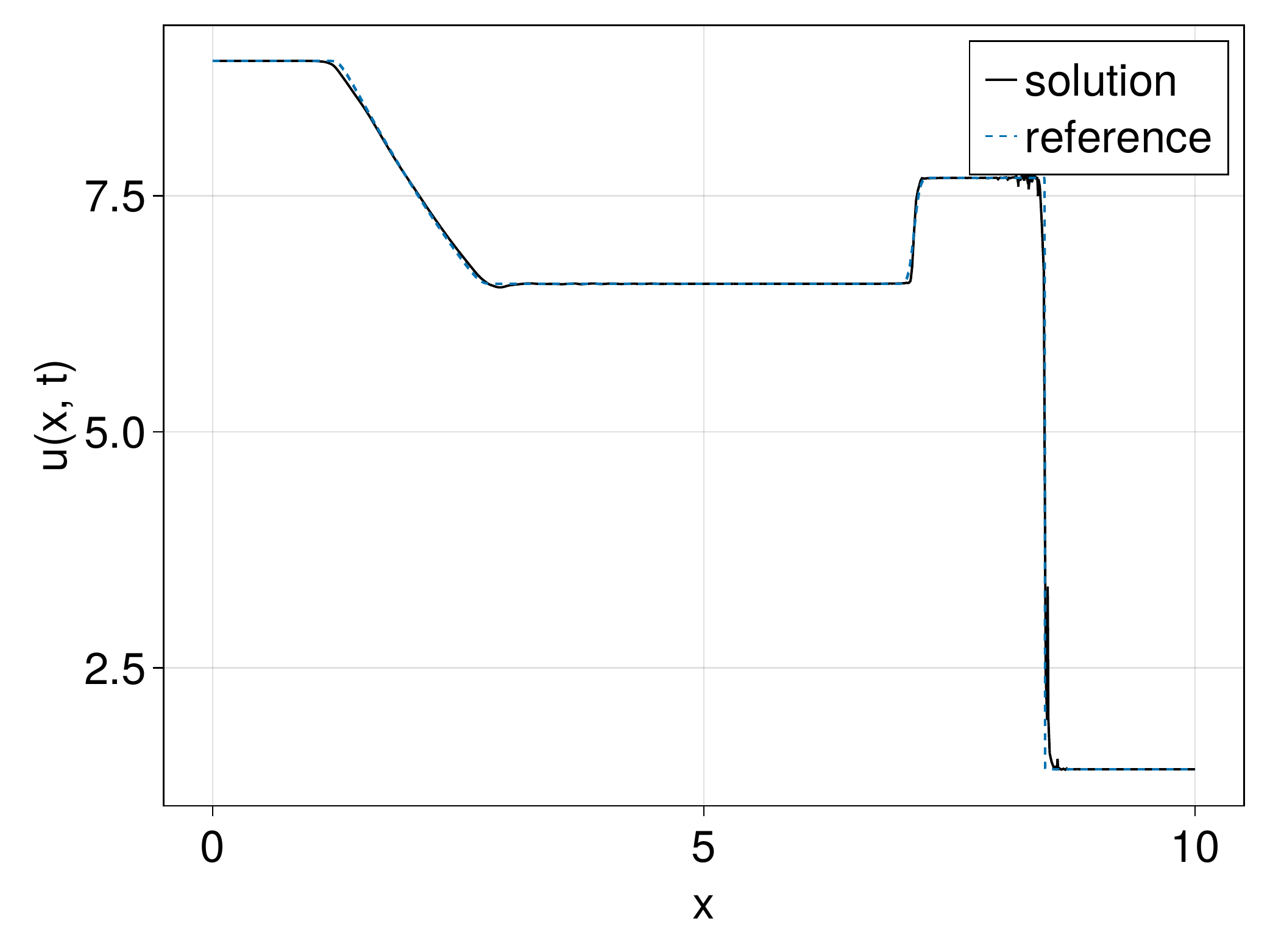}
		\caption{Energy density, $100$ cells}
	\end{subfigure}
	\caption{Shock tube 2 at $t = 1.2$ with $25$ cells corresponding to $100$ degrees of freedom and $100$ cells corresponding to $400$ degrees of freedom (p=7).}
	\label{fig:lax7}
\end{figure}
The shock tube tests were always carried out for two different numbers of cells. First for $N = N_{\text{typ}}/(p +1)$ cells, where $N_{typ} = 100$ is the usual number of cells used in comparisons for Finite-Volume methods. This was done so that the same number of degrees of freedom has to be saved. The results look satisfactory and highlight the effectivity of the method in figures \ref{fig:sod3}, \ref{fig:sod7}, \ref{fig:lax3}, \ref{fig:lax7}. All shocks are sharp and concentrated to less than one cell width. Yet, only slight overshoots and oscillations are visible directly around the shocks. These distortions are confined to the cell directly next to the shock. Contact discontinuities are slightly smeared over one cell, but after they have been smeared to this width no additional smearing takes place. The computational complexity per timestep is still low as no recovery stencil selection has to be carried out and only $1/(p + 1)$ times the number of two-point fluxes need to be evaluated. Because some other publications use 100 cells also for DG methods we carried out the tests once more for $N = 100$ cells, amounting to $400$ and $800$ degrees of freedom for orders $p=3$ and $p=7$. 

\subsection{Numerical validation of the entropy rate criterion}
To verify the entropy rate criterion the total entropy of the solution to the first shock tube above was compared to the solution calculated by a Lax-Friedrichs scheme with $3\cdot 10^4$ cells. Similar comparisons were carried out in \cite{Klein2022Using,klein2023stabilizing,KS2023EAR}. Please note that the Godunov solver used previously was swapped for a LF scheme to evade the need for an exact Riemann solver. This is also supported by our finding in lemma \ref{lem:LFspeed} and corollary \ref{cor:eiep} as a Lax-Friedrichs solution therefore has to comply with the entropy rate criterion. A scheme should in these comparisons have the same entropy dissipation rate (in the limit) as the Lax-Friedrichs scheme in the limit. Comparisons for orders $p=3$ and $p=7$ in figure \ref{fig:Etest} show that this seems to be the case. The DG scheme always has an entropy that lies below the entropy of the LF scheme. As the entropy inequality for vanishing viscosity solutions is also desirable it was also verified on a per-cell basis. We just note that the small positive violations in figure \ref{fig:Etest} are of the same magnitude as the precision achievable during the calculation of $\lambda$ using our procedure with double precision floats.
\begin{figure}
	\begin{subfigure}{0.49 \textwidth}
	\includegraphics[width=\textwidth]{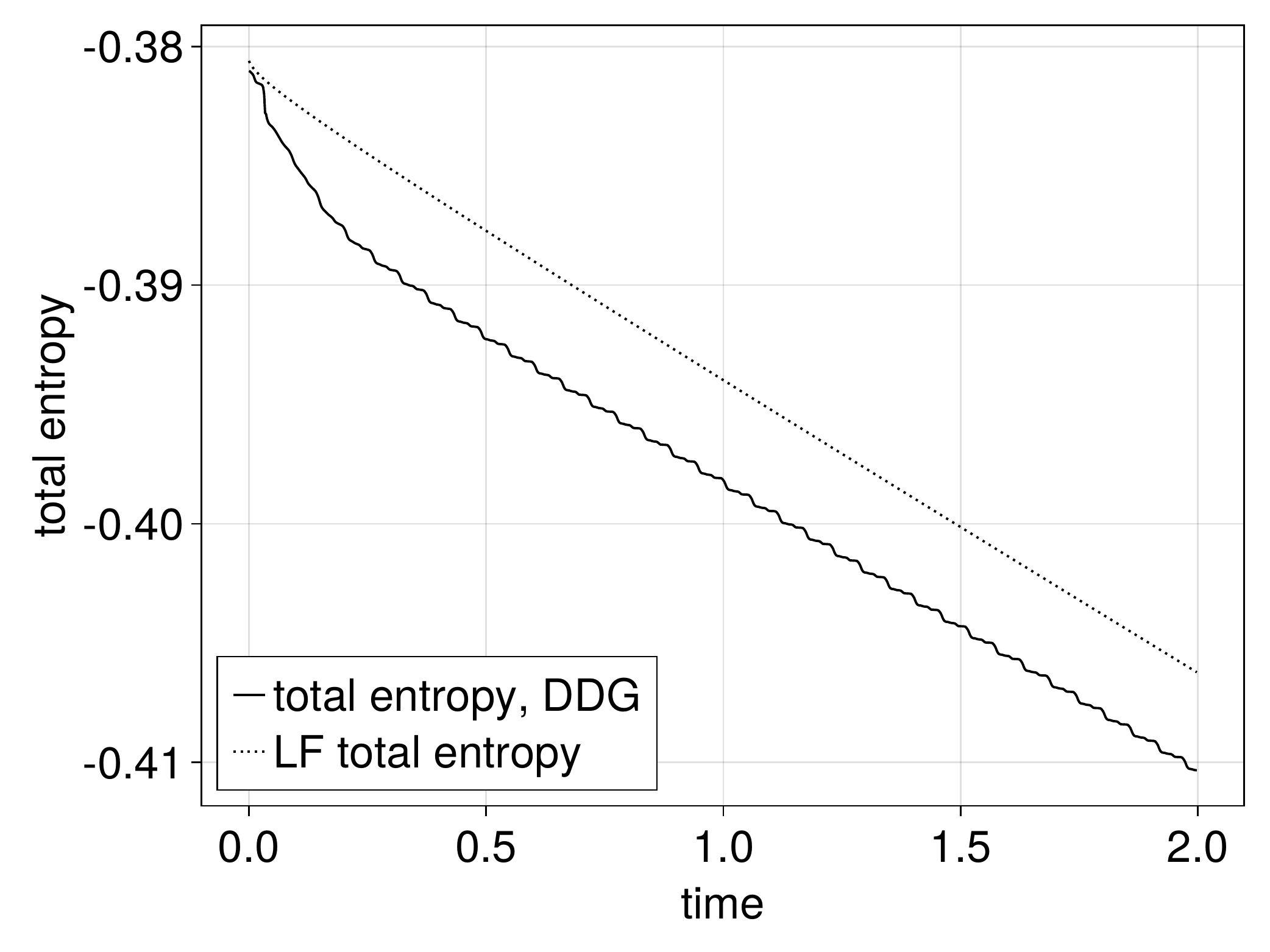}
	\caption{Plot of the total entropy ($p=3$)}
	\end{subfigure}
	\begin{subfigure}{0.49 \textwidth}
	\includegraphics[width= \textwidth]{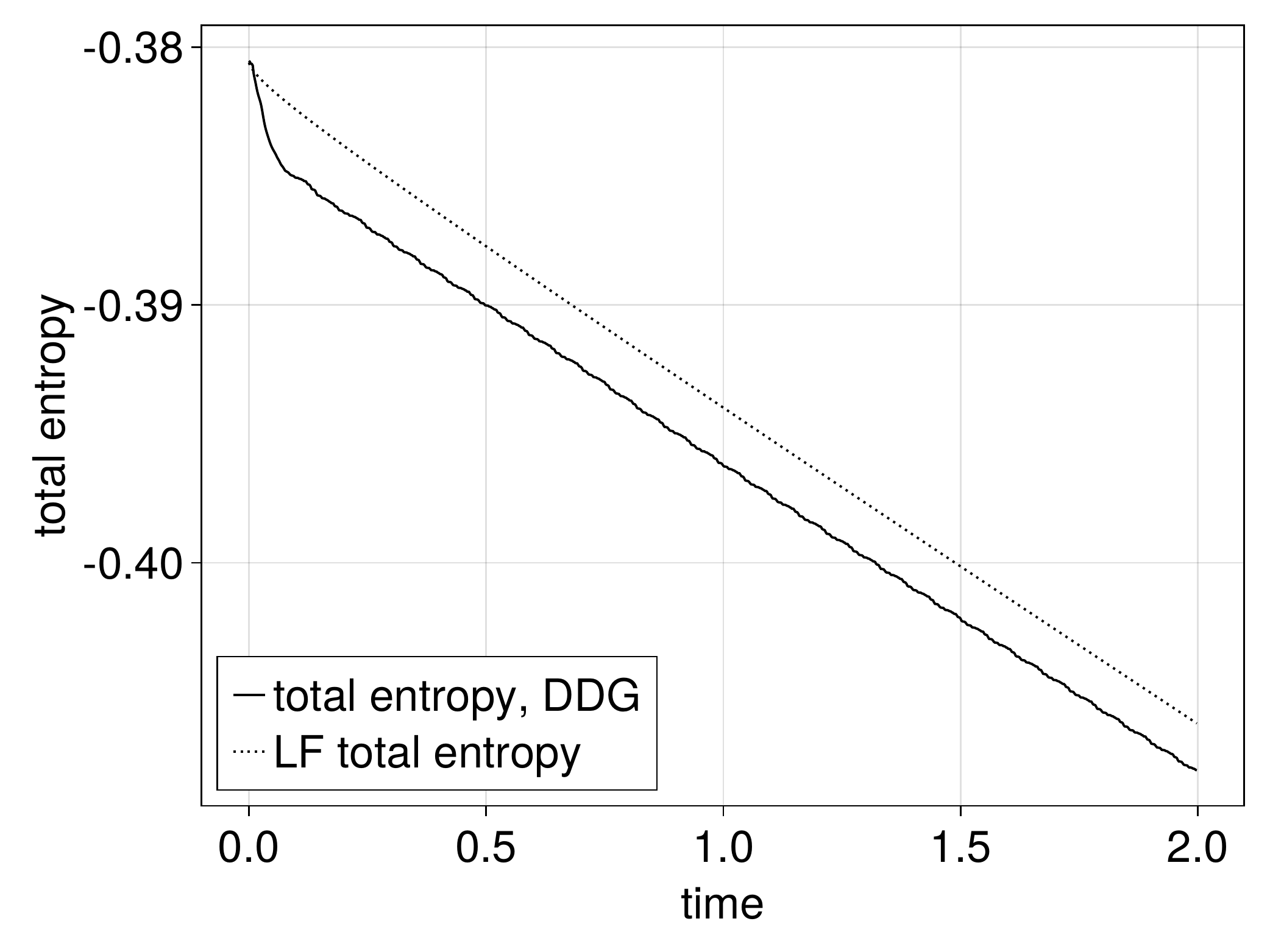}
	\caption{Plot of the total entropy $(p = 7)$}
	\end{subfigure}
	\begin{subfigure}{0.49 \textwidth}
		\includegraphics[width=\textwidth]{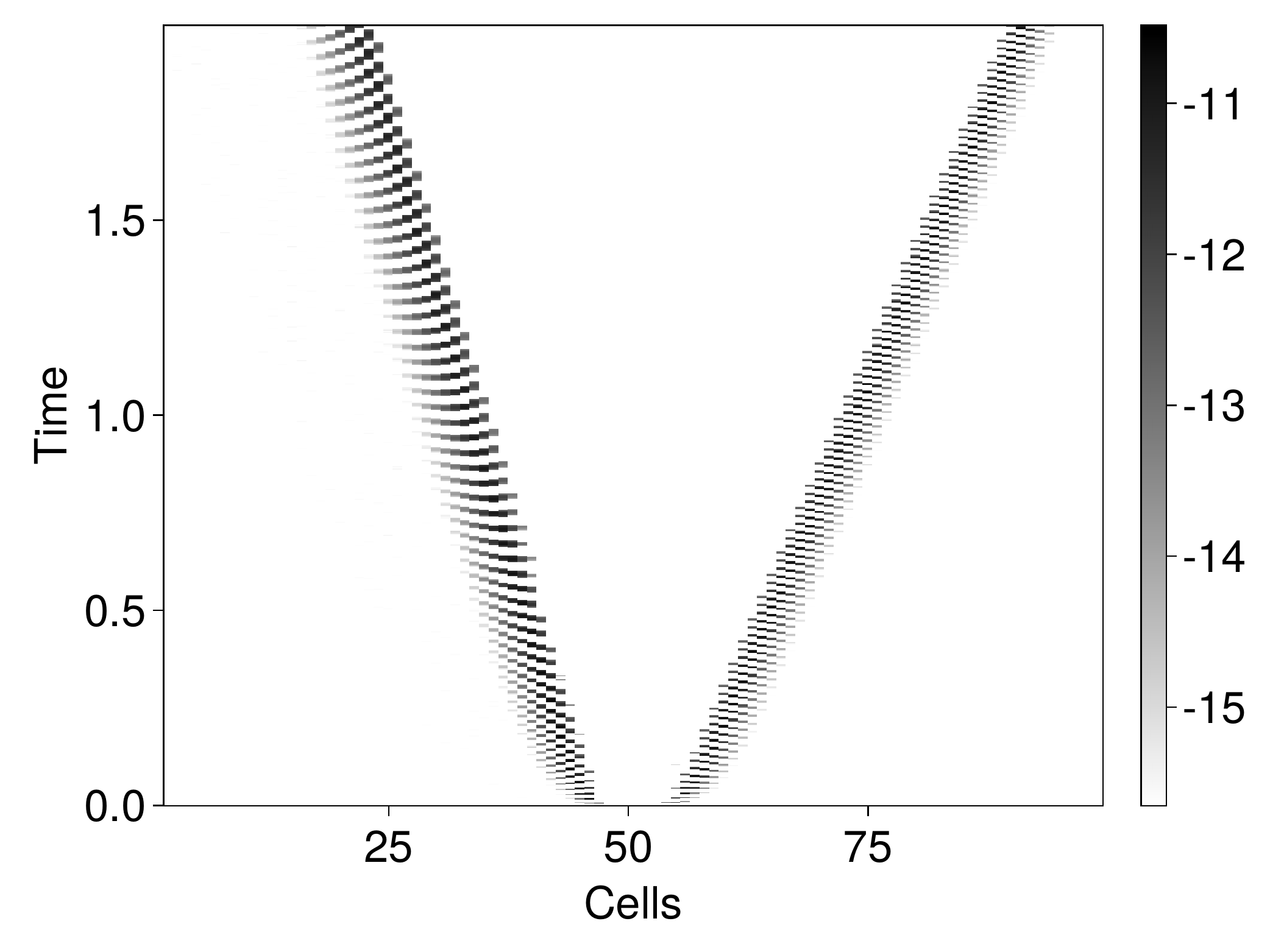}
		\caption{Logarithm of the positive violation of the entropy equality ($p = 3$)}
	\end{subfigure}
	\begin{subfigure}{0.49 \textwidth}
		\includegraphics[width=\textwidth]{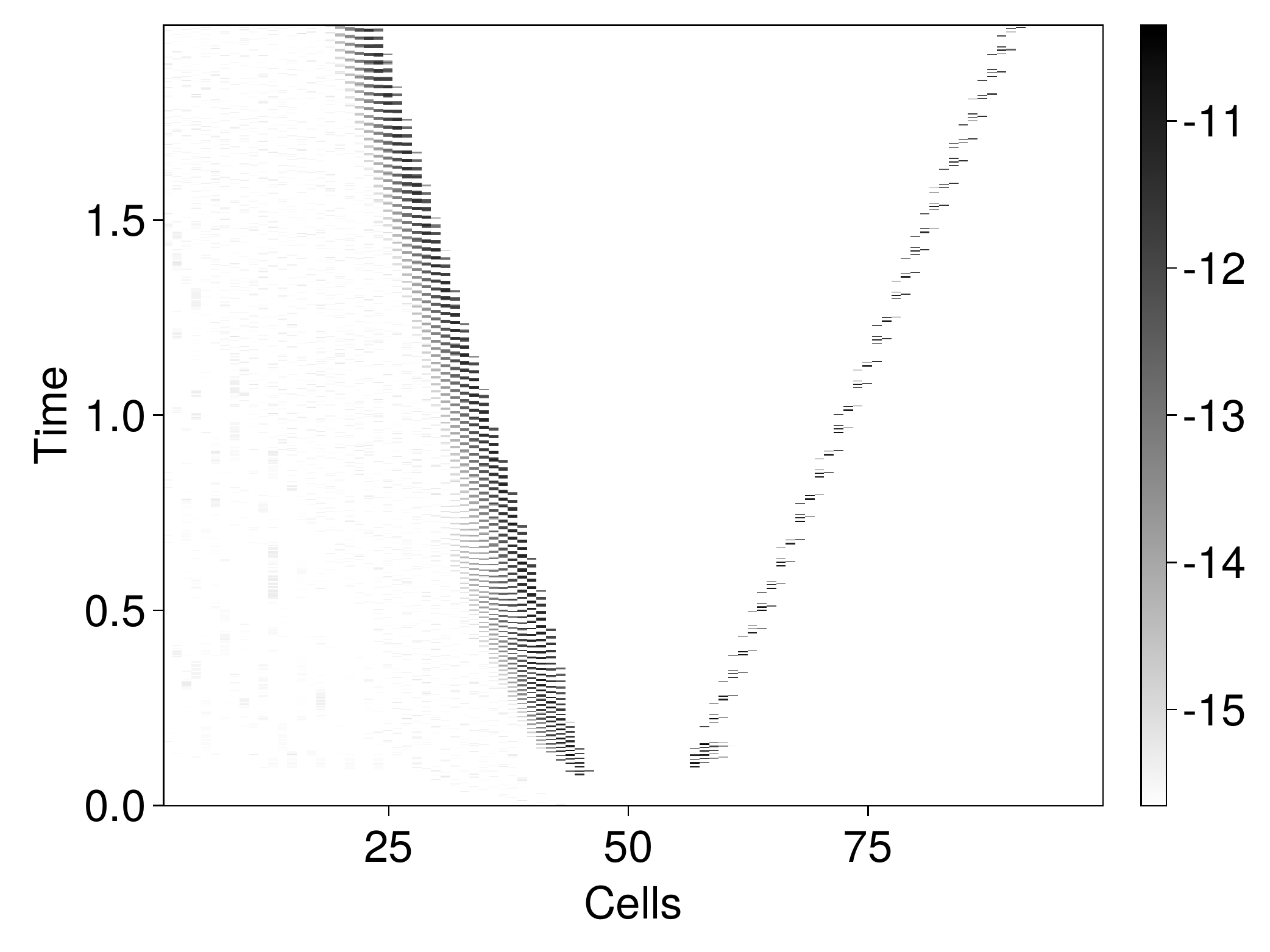}
		\caption{Logarithm of the positive violation of the entropy equality ($p = 7$)}
	\end{subfigure}
	\begin{subfigure}{0.49 \textwidth}
		\includegraphics[width=\textwidth]{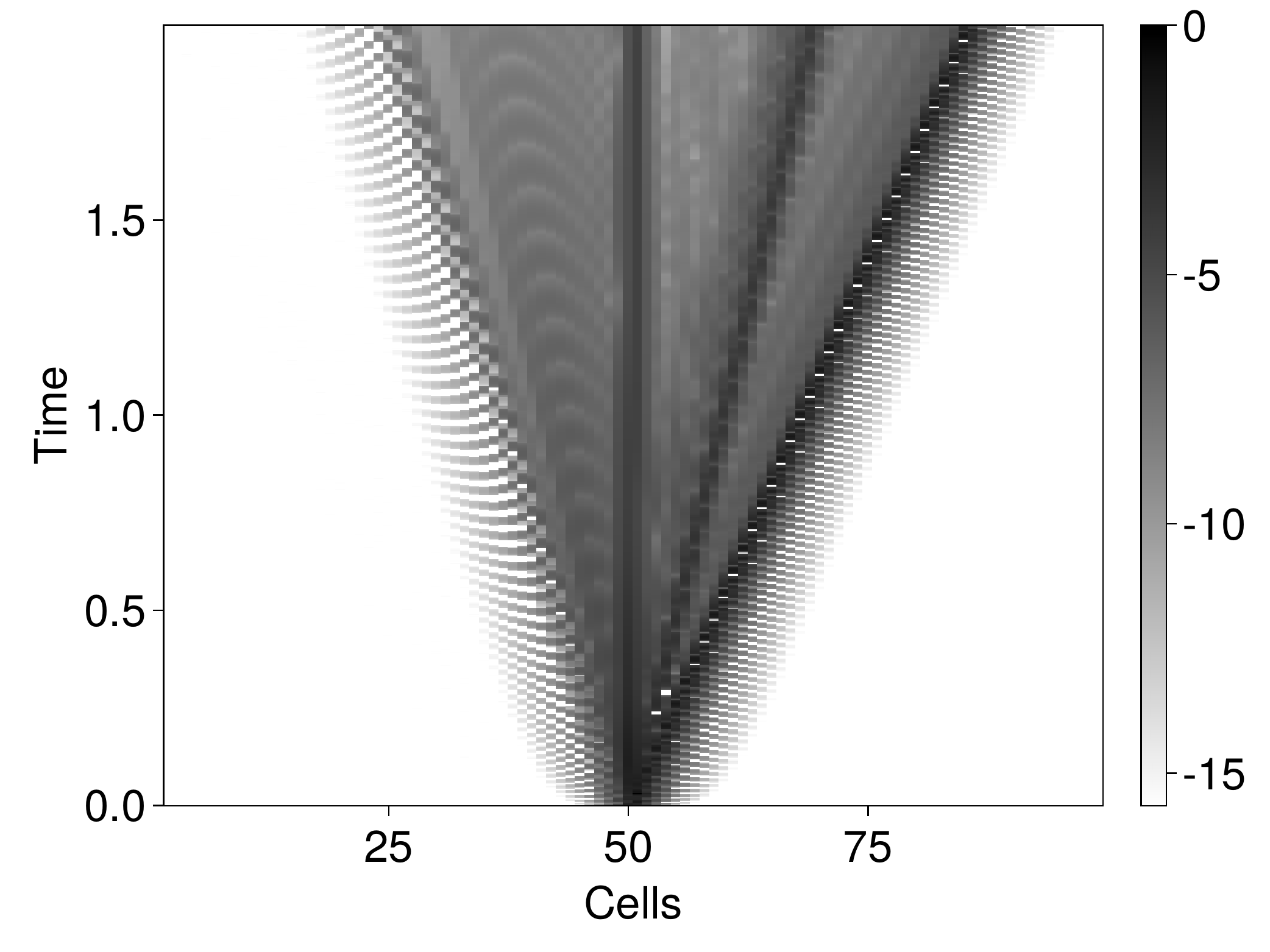}
		\caption{Logarithm of the negative violation of the entropy equality ($p = 3$)}
	\end{subfigure}
	\begin{subfigure}{0.49 \textwidth}
		\includegraphics[width=\textwidth]{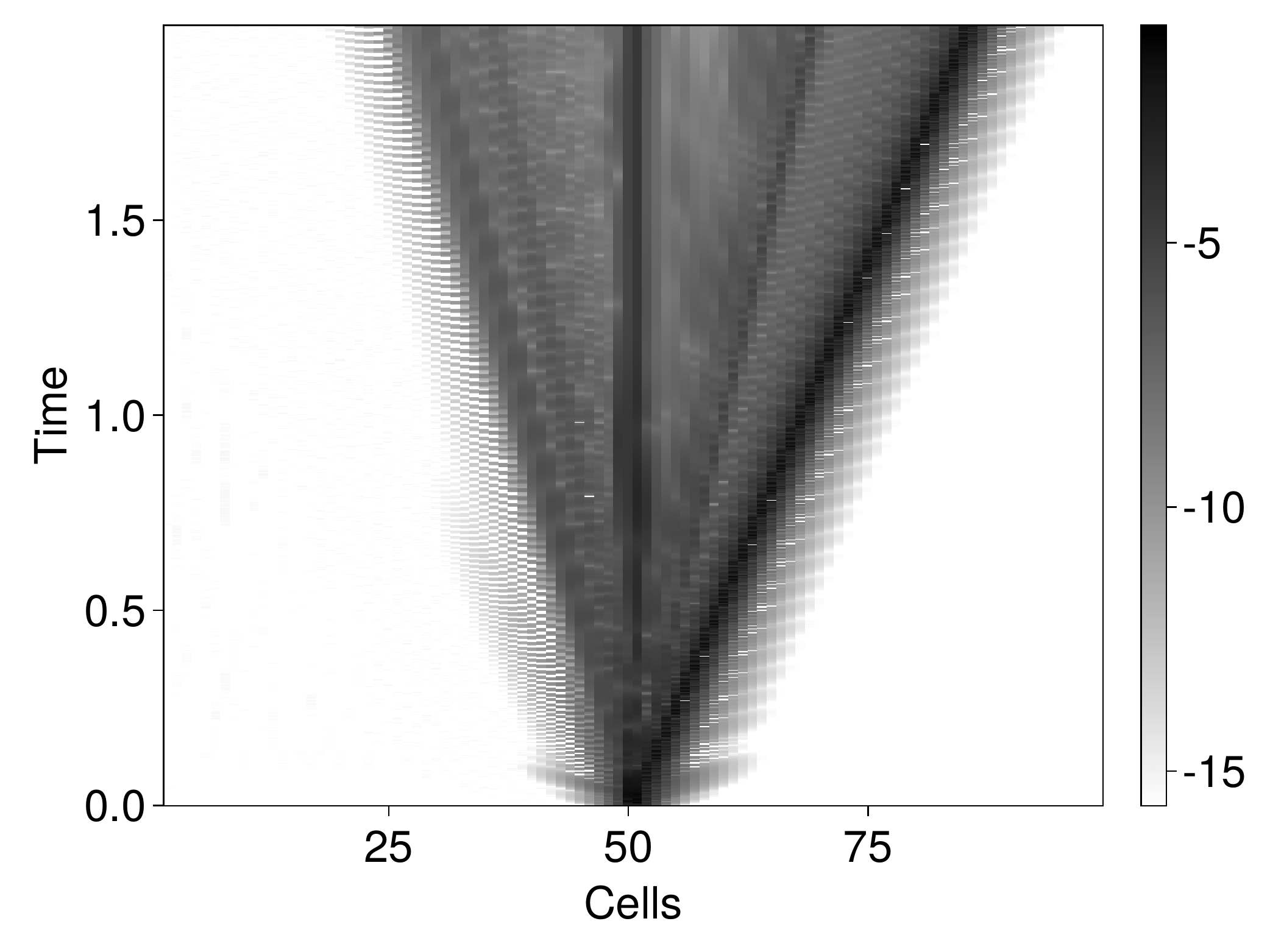}
		\caption{Logarithm of the negative violation of the entropy equality ($p = 7$)}
	\end{subfigure}
		\caption{Entropy tests for the first shock tube.}
		\label{fig:Etest}
\end{figure}

\subsection{Shu-Osher test}
To showcase a combination of shocks and smooth areas the well established shock-sine interaction problem from \cite[Problem 8]{SO1989} was tested. The initial conditions are given by
	\begin{align*}
	\rho_0(x, 0) = \begin{cases}3.857153  \\ 1 + \epsilon \sin(5 x)  \end{cases} 
	\quad 
	v_0(x, 0) = \begin{cases} 2.629  \\ 0  \end{cases}
	p_0(x, 0) = \begin{cases} 10.333 & x < 1 \\ 1 & x \geq 1 \end{cases}.
\end{align*}
The parameter $\epsilon$ was set to the canonical value of $\epsilon = 0.2$.
	\begin{figure}
		\begin{subfigure}{0.49\textwidth}
		\includegraphics[width=\textwidth]{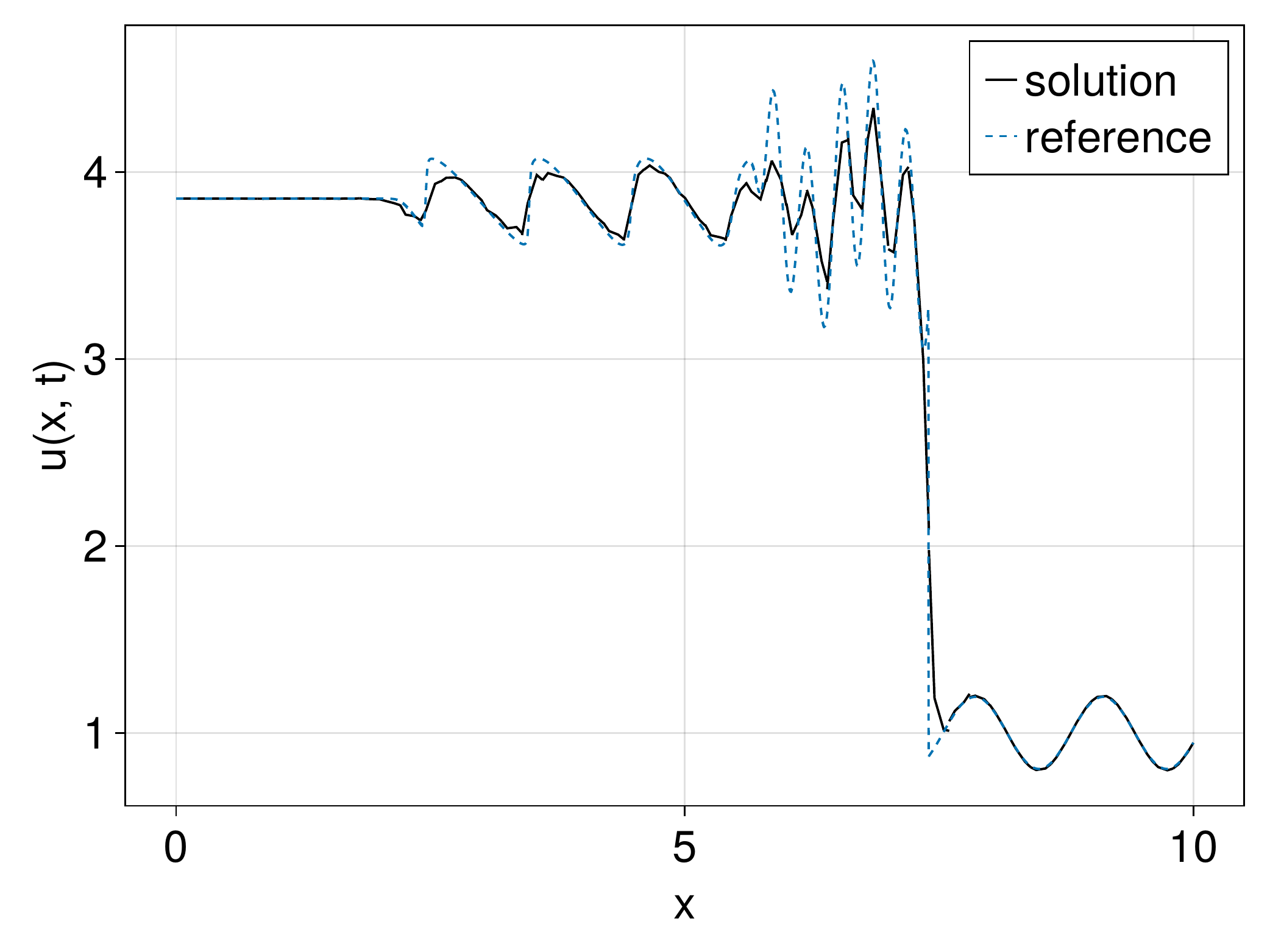}
		\caption{$p = 3$, $50$ cells.}
		\end{subfigure}
		\begin{subfigure}{0.49\textwidth}
		\includegraphics[width=\textwidth]{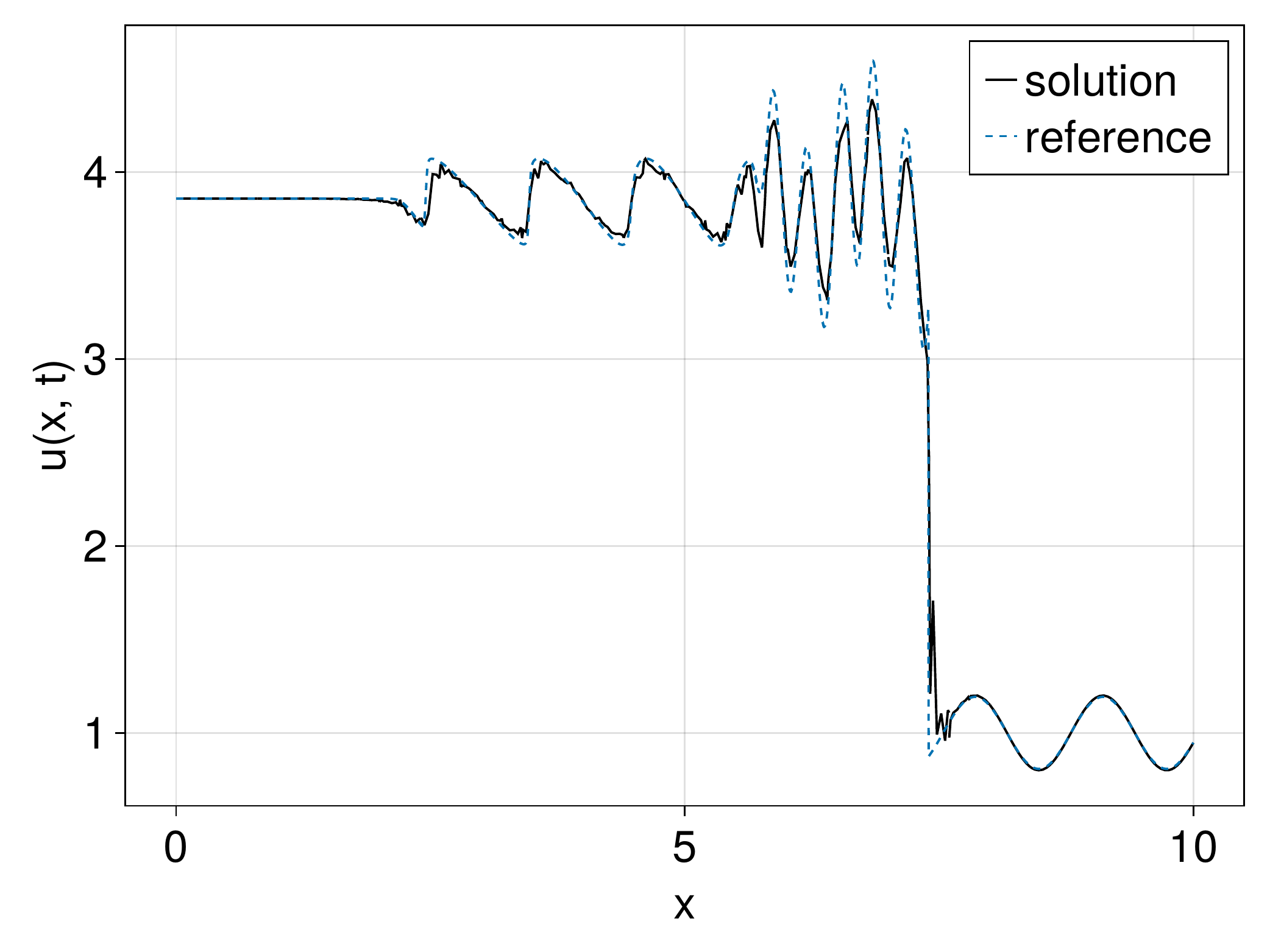}
		\caption{$p = 7$, $50$ cells.}
	\end{subfigure}
	\begin{subfigure}{0.49\textwidth}
	\includegraphics[width=\textwidth]{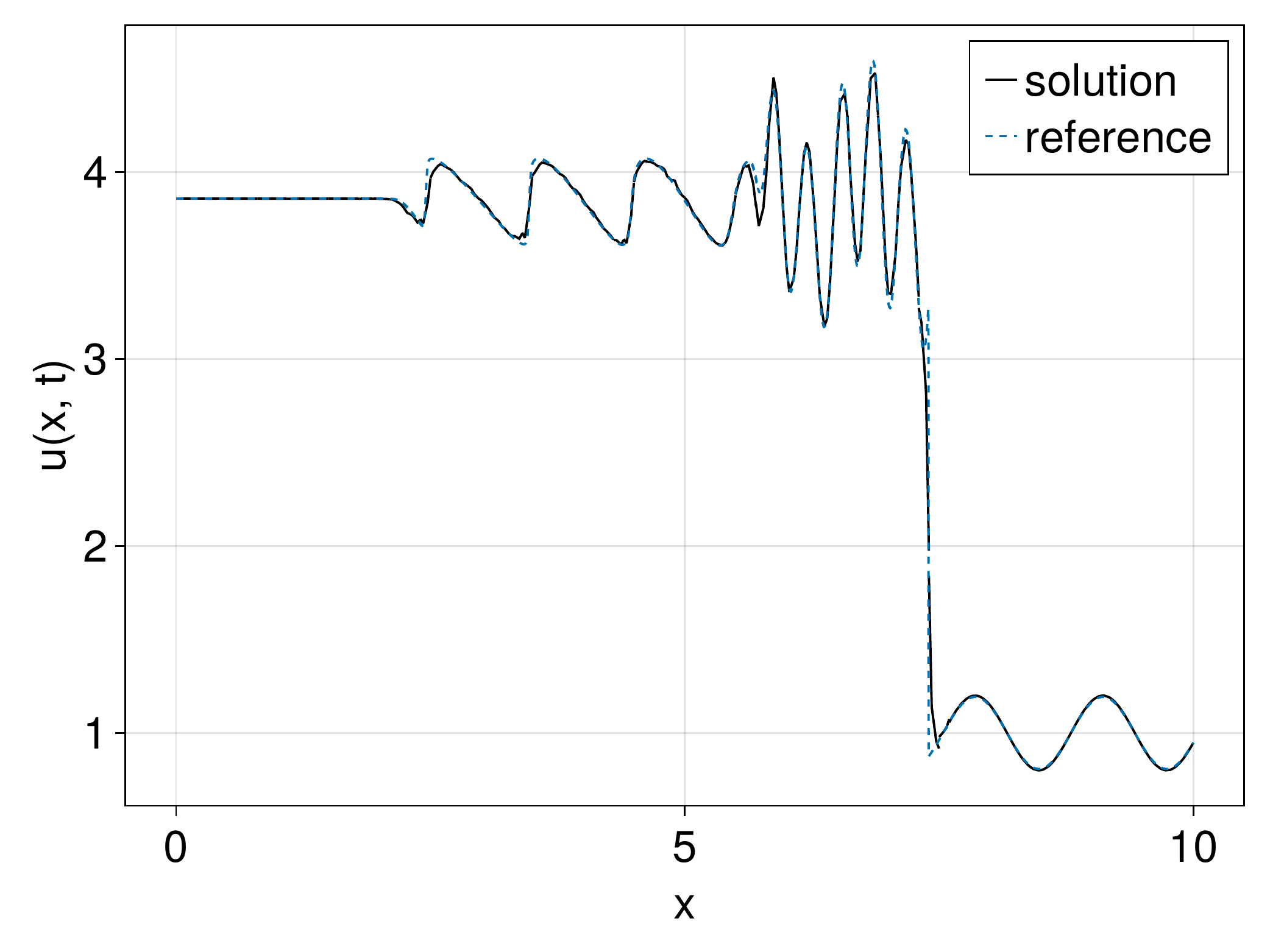}
	\caption{$p = 3$, $100$ cells.}
	\end{subfigure}
		\begin{subfigure}{0.49\textwidth}
		\includegraphics[width=\textwidth]{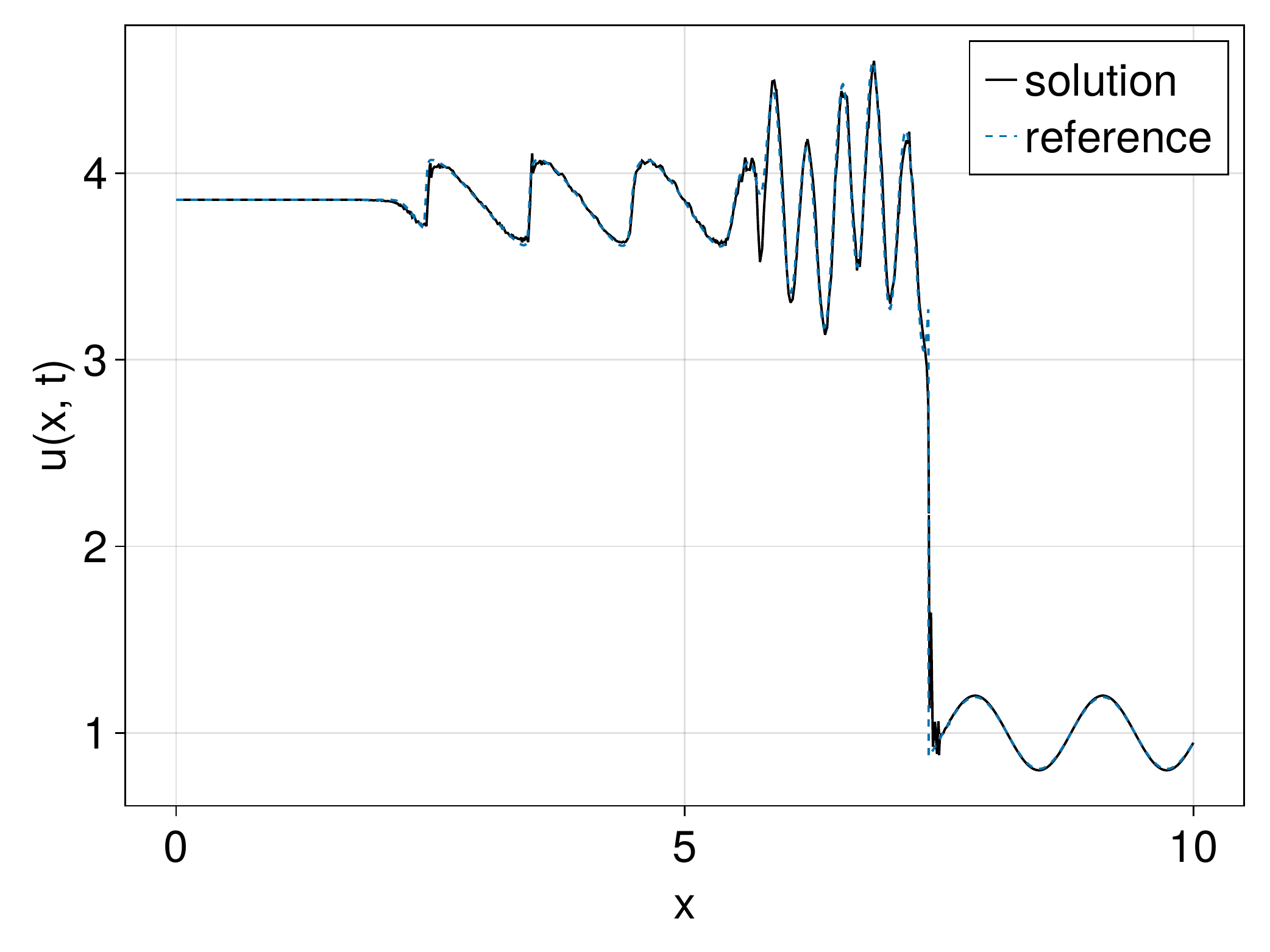}
		\caption{$p = 7$, $100$ cells.}
	\end{subfigure}
	\begin{subfigure}{0.49\textwidth}
	\includegraphics[width=\textwidth]{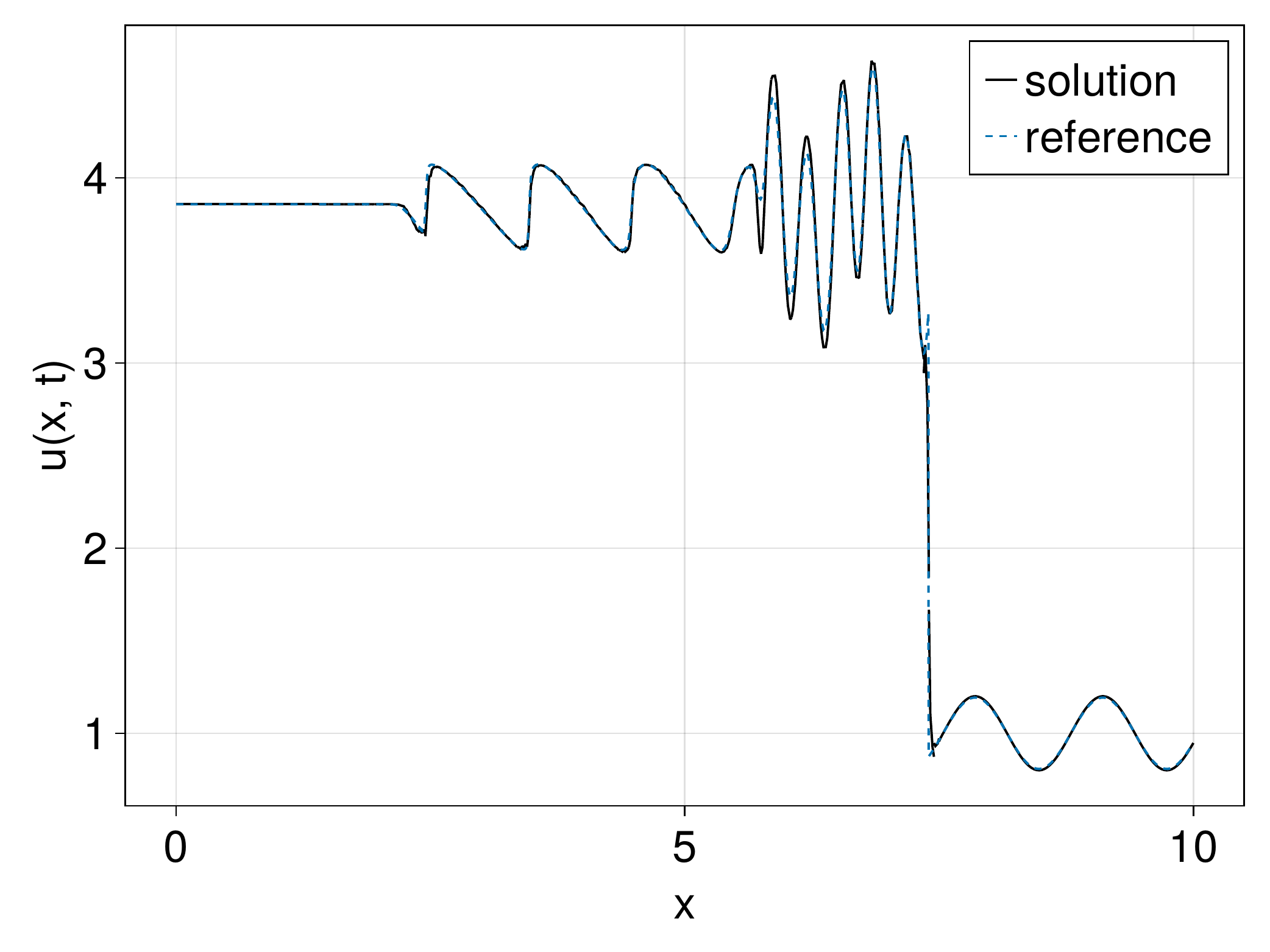}
	\caption{$p = 3$, $200$ cells.}
\end{subfigure}
	\begin{subfigure}{0.49\textwidth}
	\includegraphics[width=\textwidth]{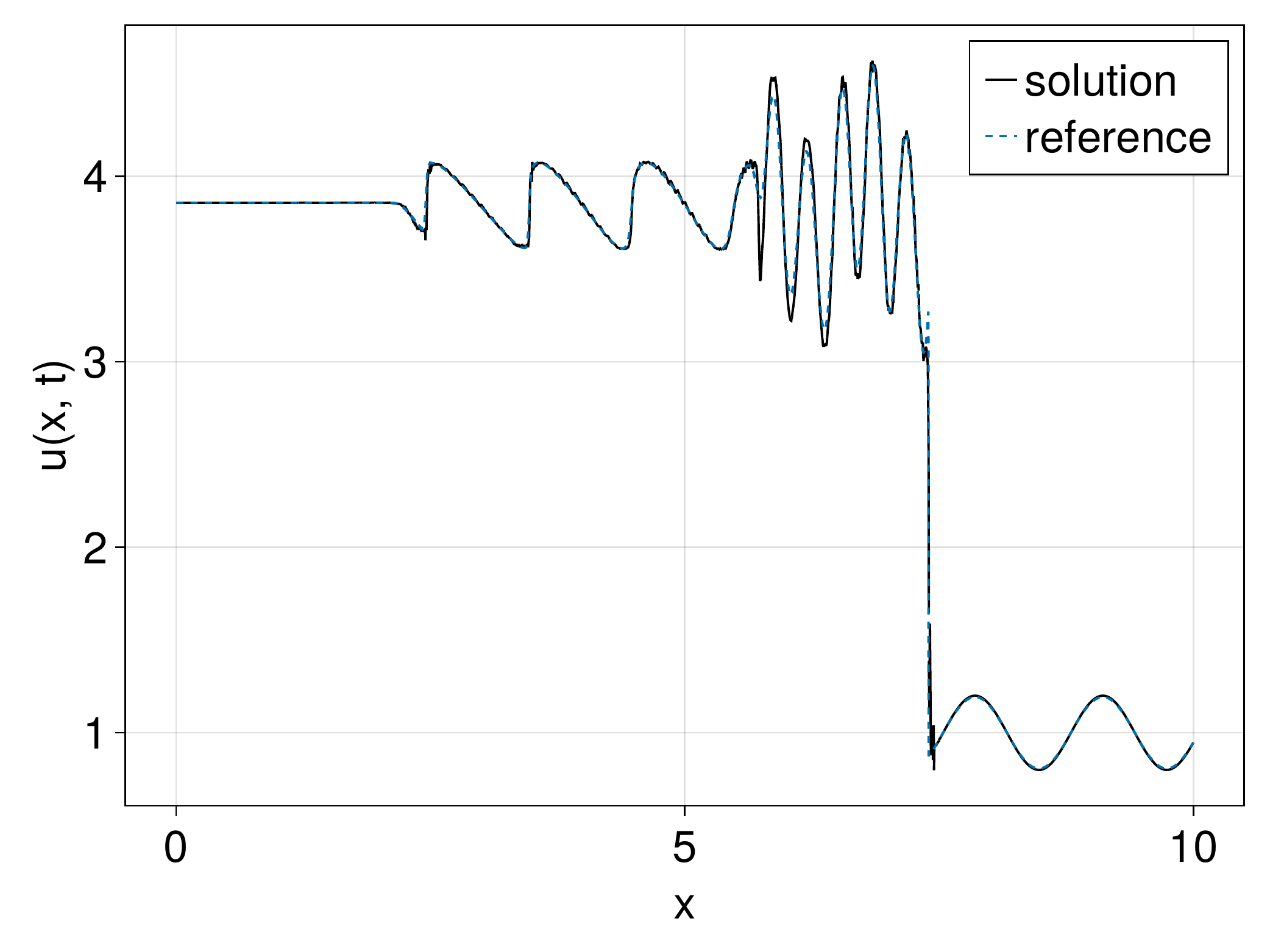}
	\caption{$p = 7$, $200$ cells.}
\end{subfigure}
\caption{Shu-Osher test for $50, 100$ and $200$ cells and order $p = 3$ and $p = 7$ and therefore $200, 400, 800$ and $1600$ degrees of freedom at $t = 1.8$.}
\label{fig:ShuOsh}
		\end{figure}
	The results look satisfactory already when only $N = 100$ cells are used in the calculation. Yet, we note that this already corresponds to $400$ and $800$ degrees of freedom for the selected orders. When $N=200$ cells are used the solution is nearly indistinguishable from the reference solution.

\subsection{Convergence Analysis}
While the main aim of our modification was to devise a new DG scheme usable for shock-capturing calculations the scheme also converges with high order of accuracy for smooth solutions in our experiments. As an example the solution of 
\begin{equation*}\label{eq_density}
	\rho_0(x, 0) = 3.857153 + \epsilon(x) \sin(2 x),  \quad v_0(x, 0) =  2.0, \quad p_0(x, 0) = 10.33333,
\end{equation*}
	with
\begin{equation*}
		 \epsilon(x) = \e^{(x-3)^2}.
\end{equation*}
and periodic boundary conditions was calculated using our modified DG method.
The analytical solution for this test problem is
\[
\rho(x, t) = 3.857153 + \epsilon(x-2t) \sin(2 x-4t),  \quad v(x, t) =  2.0, \quad p(x, t) = 10.33333,
\]
with suitable periodic boundary conditions. 
\begin{figure}
	\begin{subfigure}{0.5 \textwidth}
	\includegraphics[width=\textwidth]{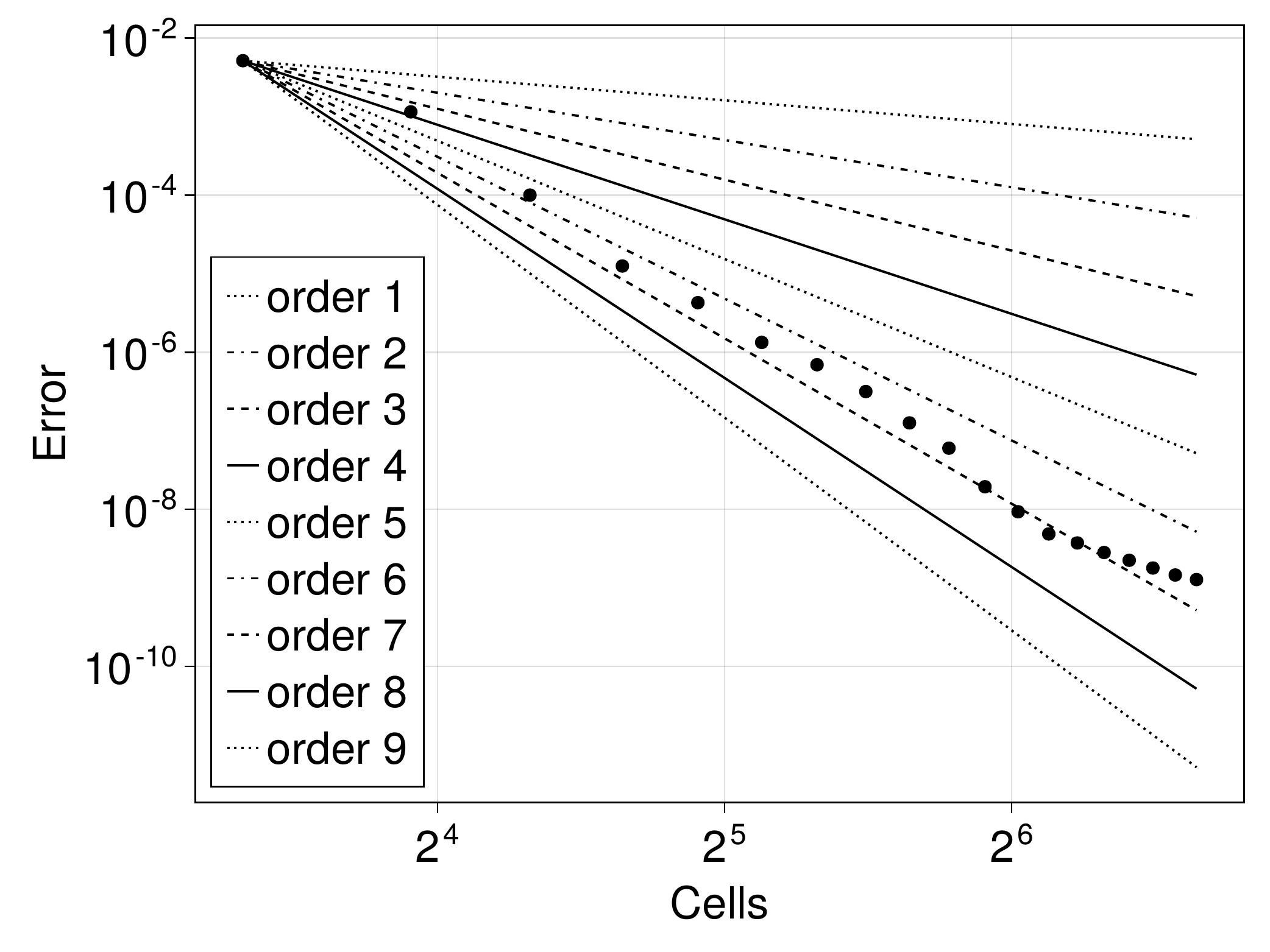}
	\caption{p = 3, $\Leb^1$ norm}
	\end{subfigure}
	\begin{subfigure}{0.5 \textwidth}
		\includegraphics[width=\textwidth]{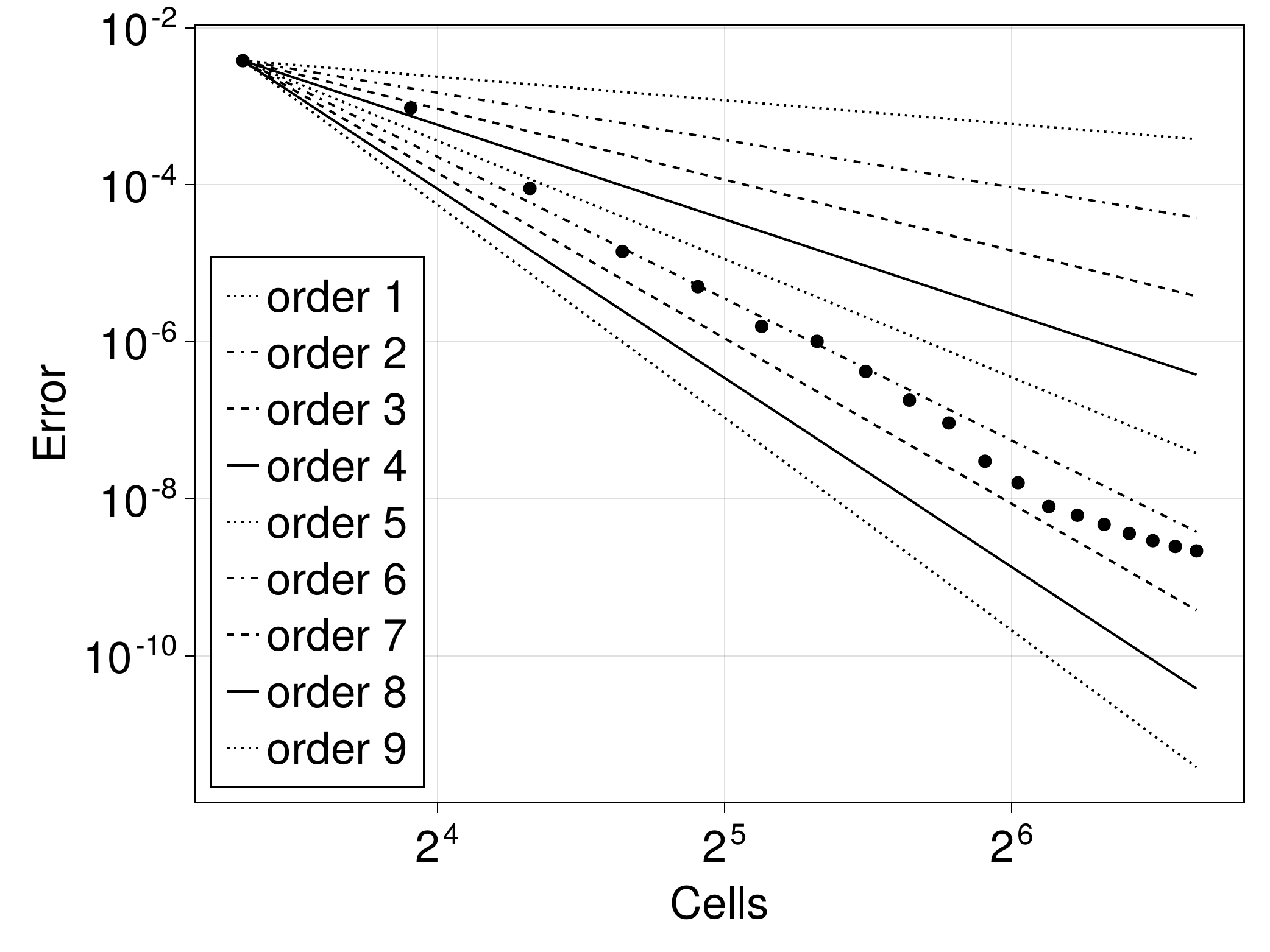}
		\caption{p = 7, $\Leb^2$ norm}
	\end{subfigure}
	\begin{subfigure}{0.5 \textwidth}
			\includegraphics[width=\textwidth]{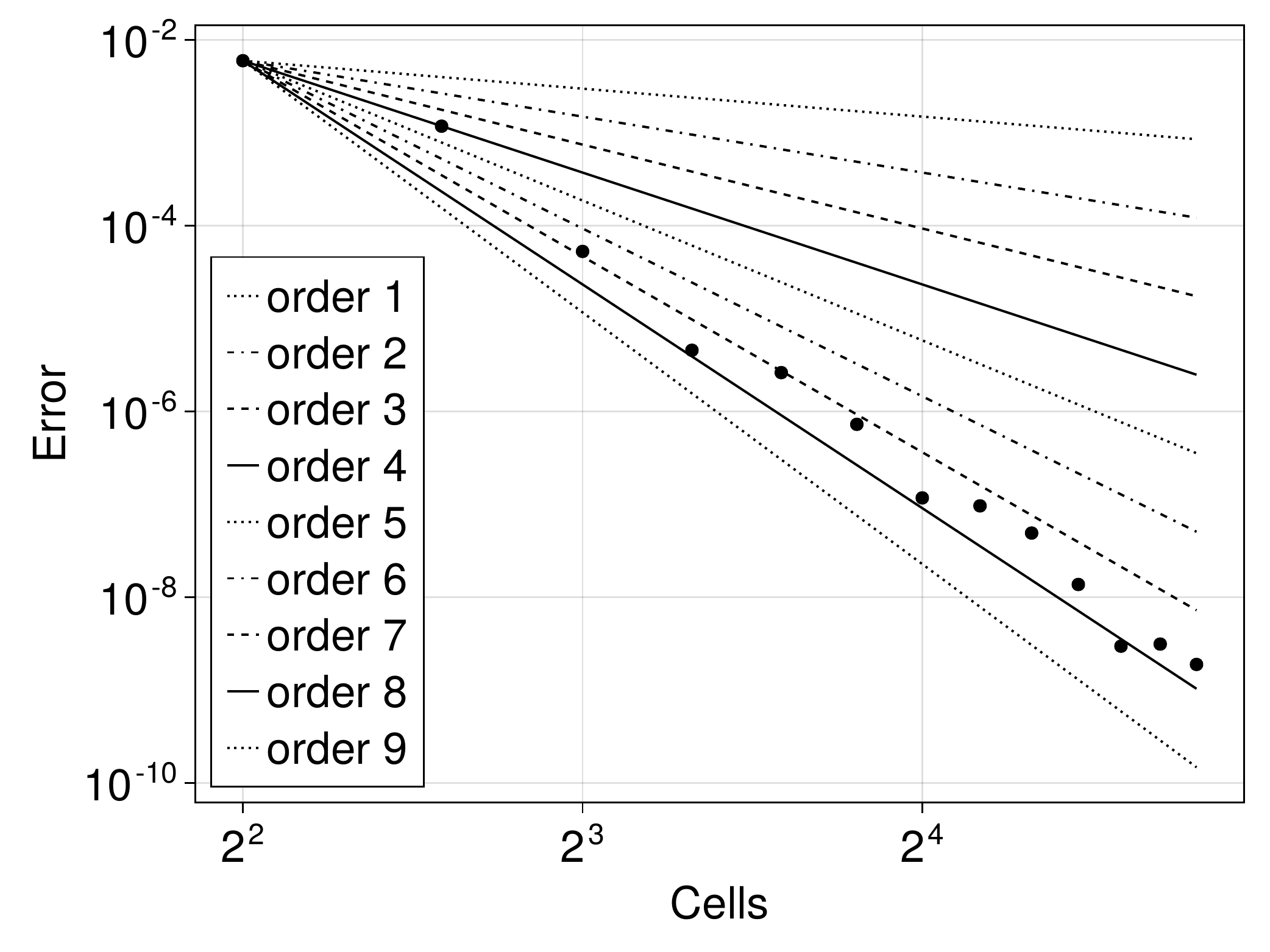}
		\caption{p = 7, $\Leb^1$ norm}
		\end{subfigure}
		\begin{subfigure}{0.5 \textwidth}
		\includegraphics[width=\textwidth]{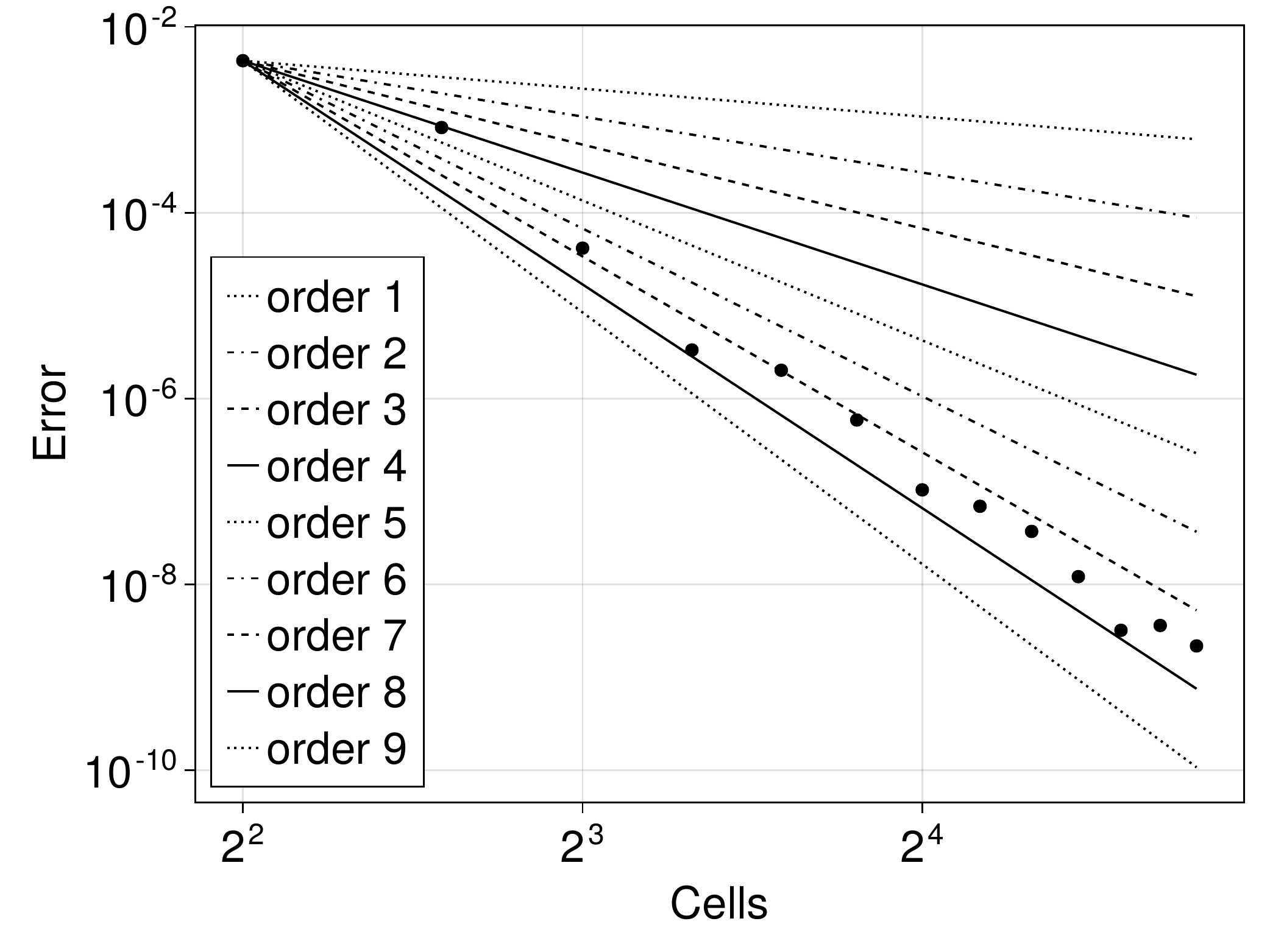}
		\caption{p = 7, $\Leb^2$ norm}
	\end{subfigure}
	\caption{Convergence Analysis for $p = 3$ and $p = 7$.}
	\label{fig:Convana}
\end{figure}
After the solution was calculated for $N = \{10, 15, 20, 25, 30, 40, 50, 60, 70, 80, 90, 100\}$ cells for $p = 3$ and with the same stepping up to $50$ cells for $p = 7$ up to $t = 5$ the $\Leb^1$ and $\Leb^2$ errors were calculated.
The convergence in figure \ref{fig:Convana} seems to take place with too high an order for the ansatz polynomials used. The reason for this could be that the accuracy of the basic scheme is significantly higher for these solutions than the accuracy of the corrected scheme, because the entropy dissipation estimate still falsely reports high amounts of entropy dissipation. When the grid is refined the entropy dissipation estimate converges with a higher speed than the basic scheme following lemma \ref{lem:eds} and because the error introduced to enforce the dissipation dominates a higher convergence speed than expected is observed.

\subsection{Timestep Analysis}
An important result of any modification to a basic scheme can be an impact on the allowed timestep size. In the first part of this publication \cite{klein2023stabilizing} this influence was tested by measuring the maximal timestep possible before a blow-up occurs. This was done once more.
\begin{figure}
	\begin{subfigure}{0.49\textwidth}
		\includegraphics[width=\textwidth]{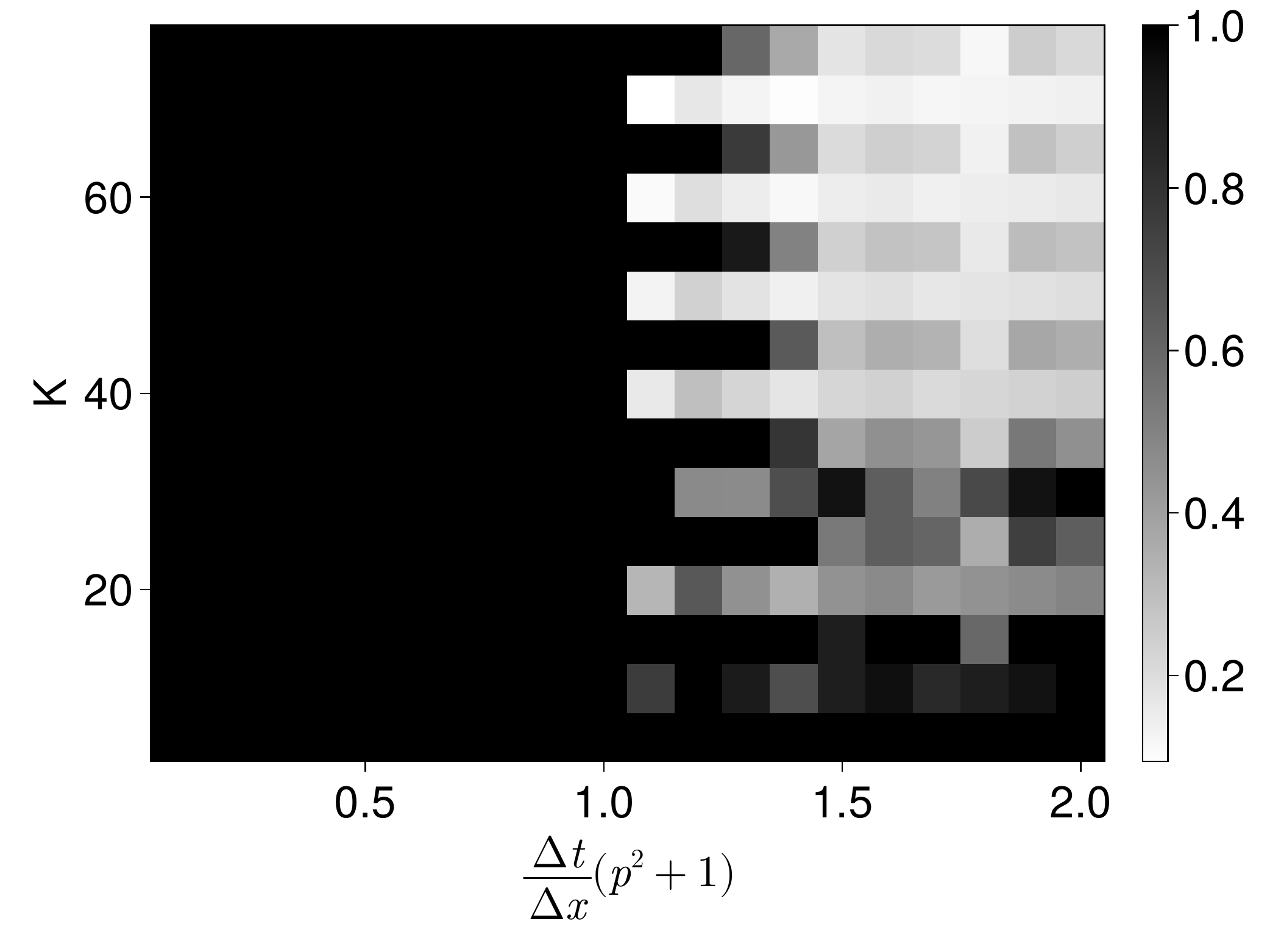}
	\end{subfigure}
	\begin{subfigure}{0.49\textwidth}
		\includegraphics[width=\textwidth]{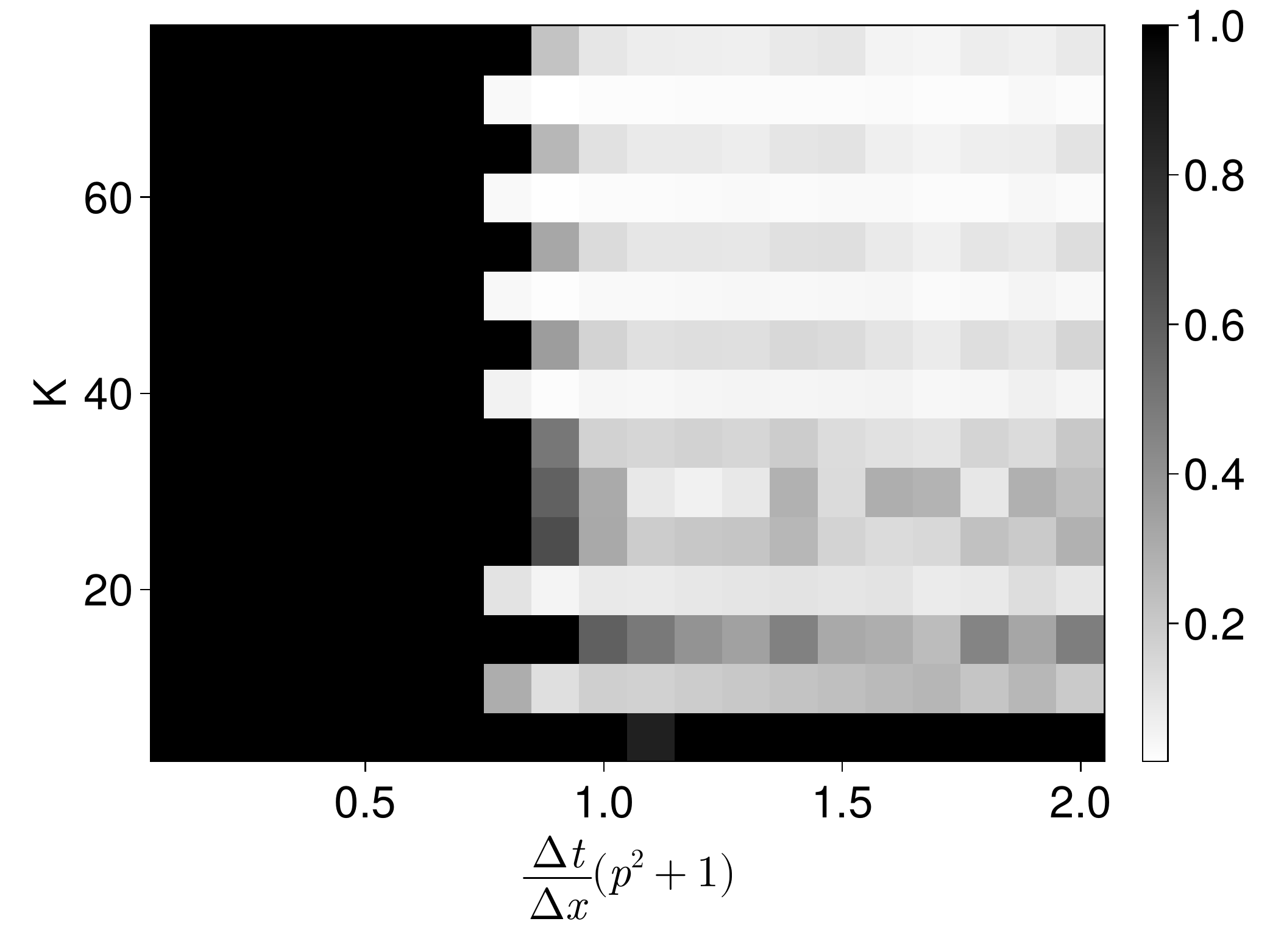}
	\end{subfigure}
	\caption{Maximal timestep sizes for $p = 3$ and $p = 7$ and the shock tube .}
	\label{fig:timestepana}
	\end{figure}
The maximal timestep possible for the first shock tube for orders $p = 3$ and $p=7$ is shown in figure \ref{fig:timestepana}. Obviously this timestep is acceptable and when corrected for the larger maximal wave speed of the Riemann problem used for testing, larger than the timestep reported in the previous part, highlighting the superiority of the new dissipation direction.
	\section{Conclusion} \label{sec:concl}
	The method described in \cite{klein2023stabilizing} to enforce an entropy rate criterion for DG methods was improved. By using a direct indicator for the entropy dissipation the error indicator used before could be replaced, resulting in a lower dissipation in situations like contact discontinuities. For smooth solutions this new method to quantify the amount of dissipation needed converges significantly faster to zero than the error estimate used before, and therefore allows us to recover the convergence speed of the basic DG scheme that was reduced by one degree before. Further, the direct quantification of the entropy dissipation needed allowed us to consider different dissipation directions, especially combining smoothing and dissipation and therefore bridging into the field of modal filtering. The effectivity of the refined method was demonstrated for the Euler system of gas dynamics. The method is not only high order accurate but also able to handle shocks, contact discontinuities, and rarefactionwaves. The next logical steps can be the application to two-dimensional problems, the application of the designed entropy inequality predictors to other schemes like continuous Galerkin and Spectral Volume schemes, where several adjustments will be needed, and revisiting the splitting into a fully discrete scheme already explored in \cite{klein2023stabilizing}. The presented method to estimate the entropy dissipation needed could also be used with artificial viscosity shock-capturing as for example described in \cite{GNACS2019Smooth}.
%	\section{Acknowledgements}
		
	\section{Competing Interests}
		The author has no relevant financial or non-financial interests to disclose.
	\section{Data Availability}
		The commented implementation of the schemes is available under  \newline \href{https://github.com/simonius/dgdafermos}{https://github.com/simonius/dgdafermos}.
	\section{Bibliography}
	\bibliographystyle{plainnat}
	\bibliography{lit}
\end{document}